\renewcommand{\subsection}{\@startsection
{subsection} % name
{2} % level
{0mm} % indent
{0.5\baselineskip} % befores
{0.3\baselineskip} % afterskip
{\normalfont\normalsize\raggedright}} % style
\numberwithin{equation}{section}
\newcommand{\be}{\begin{eqnarray}}
\newcommand{\ee}{\end{eqnarray}}
\newcommand{\ce}{\begin{eqnarray*}}
\newcommand{\de}{\end{eqnarray*}}
\newtheorem{theorem}{Theorem}[section]
\newtheorem{lemma}[theorem]{Lemma}
\newtheorem{remark}[theorem]{Remark}
\newtheorem{definition}[theorem]{Definition}
\newtheorem{proposition}[theorem]{Proposition}
\newtheorem{example}[theorem]{Example}
\newtheorem{corollary}[theorem]{Corollary}
\newtheorem*{theorem*}{Theorem}
\newtheorem*{remark*}{Remark}
\def\geq{\geqslant}
\def\leq{\leqslant}
\def\ge{\geqslant}
\def\le{\leqslant}
\def\div{\mathord{{\rm div}}}
\def\bH{{\mathbf H}}
 \def\R{\mathbb R}
\def\ff{\frac} \def\R{\mathbb R}  \def\ff{\frac}  
\def\N{\mathbb N}  
\def\<{\langle} \def\>{\rangle}
\def\eps{\varepsilon}
\def\e{\mathrm{e}}
\def\p{\partial}
\def\d{\hh}
\def\[{{\Big[}}
\def\]{{\Big]}}
\def\<{{\langle}}
\def\>{{\rangle}}
\def\dif{{\mathord{{\rm d}}}}
\def\bb2{{\boldsymbol{2}}}
\def\no{\nonumber}
\def\={&\!\!=\!\!&}
\def\bB{{\mathbf B}}
\def\bC{{\mathbf C}}
\def\cM{{\mathcal M}}
\def\cN{{\mathcal N}}
\def\cP{{\mathcal P}}
\def\cR{{\mathcal R}}
\def\cU{{\mathcal U}}
\def\mE{{\mathbb E}}
\def\mI{{\mathbb I}}
\def\mN{{\mathbb N}}
\def\mP{{\mathbb P}}
\def\mR{{\mathbb R}}
\def\b1{{\mathbbm 1}}
\def\sL{{\mathscr L}}
\def\sS{{\mathscr S}}
\def\E{\mathbb E}
\def\geq{\geqslant}
\def\leq{\leqslant}
\def\ge{\geqslant}
\def\le{\leqslant}
\def\div{\mathord{{\rm div}}}
\def\eps{\varepsilon}
\def\e{\mathrm{e}}
\def\p{\partial}
\def\d{\hh}
\def\[{{\Big[}}
\def\]{{\Big]}}
\def\<{{\langle}}
\def\>{{\rangle}}
\def\dif{{\mathord{{\rm d}}}}
\def\no{\nonumber}
\def\={&\!\!=\!\!&}
\def\bt{\begin{theorem}}
\def\et{\end{theorem}}
\def\bl{\begin{lemma}}
\def\el{\end{lemma}}
\def\br{\begin{remark}}
\def\er{\end{remark}}
\def\bd{\begin{definition}}
\def\ed{\end{definition}}
\def\bp{\begin{proposition}}
\def\ep{\end{proposition}}
\def\bc{\begin{corollary}}
\def\ec{\end{corollary}}
\def\geq{\geqslant}
\def\leq{\leqslant}
\def\ge{\geqslant}
\def\le{\leqslant}
\def\div{\mathord{{\rm div}}}
\def\bH{{\mathbf H}}
 \def\R{\mathbb R}
\def\ff{\frac} \def\R{\mathbb R}  \def\ff{\frac}  
\def\N{\mathbb N}  
\def\<{\langle} \def\>{\rangle}
\def\bb2{{\boldsymbol{2}}}
\def\bbb1{\boldsymbol{1}}
\def\no{\nonumber}
\def\={&\!\!=\!\!&}
\def\1{\mathbbm{1}}
\def\e{\mathrm{e}}
\def\levy{L\'{e}vy }
\def\ito{It$\hat{\mathrm{o}}$}
\def\P{\mathbb{P}}
\def\d{\mathrm{d}}
\def\nalpha{{\alpha}} %Stability parameter
\def\ind{{~\hbox{\rm l\kern-.4em\hbox{\rm l}}}~} %Indicator function
\def\E{\mathbb{E}}
\def\P{\mathbb{P}}
\def\R{\mathbb{R}}
\def\d{\mathrm{d}}
\def\N{\mathbb{N}}
\def\1{\mathbbm{1}}
\def\e{\mathrm{e}}
\def\levy{L\'{e}vy}
\def\ito{It$\hat{\mathrm{o}}$}
\begin{document}

\title[Quantitative approximation related to $\alpha$-stable noise]{Quantitative approximation to density dependent SDEs driven by $\alpha$-stable processes}

\author{Ke Song}
\author{Zimo Hao}
\author{Mingkun Ye}

\address{Department of Mathematics, Beijing Institute of Technology, Beijing 100081, China}
\email{songke@bit.edu.cn}

\thanks{Ke Song is grateful to the financial supports by National Key R \& D Program of China (No. 2022YFA1006300) and the financial supports of the NSFC (No. 12426205, No. 12271030).}

\address{Universit\"at Bielefeld, Fakult\"at f\"ur Mathematik, Bielefeld 33615, Germany }
\email{zhao@math.uni-bielefeld.de}

\thanks{Zimo Hao is grateful for the DFG through the CRC 1283/2 2021 - 317210226 "Taming uncertainty and profiting from randomness and low regularity in analysis, stochastics and their applications".}

\address{School of Mathematics, Sun Yat-sen University, Guangzhou 510275, China}
\email{yemk@mail2.sysu.edu.cn}

%\email{()}	

\begin{abstract}
{
Based on a class of moderately interacting particle systems, we establish a quantitative approximation for density-dependent McKean-Vlasov SDEs and the corresponding nonlinear, nonlocal PDEs. The SDE is driven by both Brownian motion and pure-jump L\'{e}vy processes. By employing Duhamel's formula, density estimates, and appropriate martingale functional inequalities, we derive precise convergence rates for the empirical measure of particle systems toward the law of the McKean–Vlasov SDE solution. Additionally, we quantify both weak and pathwise convergence between the one-marginal particle and the solution to the McKean-Vlasov SDE. Notably, all convergence rates remain independent of the noise type.
}
	\\
	% 摘要后面紧跟的 AMS数学分类
	%{\it AMS Mathematics Subject Classification :} 60J25; 60J60; 60J76.	
	\\
	{\it Keywords}: $\alpha$-stable~process; Propagation of chaos; Density dependent SDEs; Moderately interacting particle systems
	\\ 
\end{abstract}

%\keywords{} 这样关键词在页脚处

\date{\today}

\maketitle

%在生成的目录中，{1}只显示到节级别，即章节和节，而不会显示子节{2}和子子节{3}
\setcounter{tocdepth}{2}
%生成目录
\tableofcontents

\section{Introduction}
%\subsection{Background}
%For more than a century, it has been well known that there is a close relationship between stochastic differential equations (SDEs) and partial differential equations (PDEs). 
Following the seminal work of McKean \cite{Mc67} and Kac \cite{kac1956foundations}, there has been a growing interest in investigating the McKean-Vlasov stochastic differential equations (SDEs), also known as distribution-dependent SDEs (DDSDEs), or mean-field SDEs, and their corresponding non-linear partial differential equations (PDEs).
%\begin{align}\label{MVSDE0}
    %\dif X_t =B(t,X_t,\mu_t)\dif t+\Sigma(t,X_t,\mu_t)\dif W_t,
%\end{align}  
%where \((W_t)_{t\ge0}\) is a standard $d$-dimensional Brownian motion, \(\mu_t\) is the time-marginal law of the solution \((X_t)_{t\ge0}\), and \(B, \Sigma\) are measurable functions mapping \(\mathbb{R}_+ \times \mathbb{R}^d \times \mathcal{P}(\mathbb{R}^d)\) to \(\mathbb{R}^d\) and \(\mathbb{R}^{d \times d}\), respectively. Here, \(\mathcal{P}(\mathbb{R}^d)\) denotes the space of probability measures on \(\mathbb{R}^d\).  

In this paper, we consider the following $d$-dimensional density dependent SDE (dDSDE) driven by $\alpha$-stable processes with $\alpha\in(1,2]$:
\begin{align}\label{EQ:dDSDE:01}
	\dif X_t &=b(t,X_t,\rho_t(X_t)) \dif t + \dif L^{\nalpha}_t,\quad t\in[0,T],
\end{align}
where $b:\mR_{+}\times \mR^d\times \mR_+\to \mR^d$ is measurable, $\rho_t(x):=\mu_{X_t}(\mathrm{d} x)/{\mathrm{d} x} (x)$ is the density of the time marginal law $\mu_{X_t}$ to the solution \((X_t)_{t\ge0}\)  with respect to the Lebesgue measure, and $(L^{\nalpha}_t)_{t\ge0}$ is a standard $\R^d$-valued $\alpha$-stable process defined on some probability space $(\Omega, \mathcal{F}, \mathbb{P})$. When $\alpha=2$, $(L^{2}_t)_{t\ge0}$ denotes the standard $d$-dimensional Brownian motion.

By applying It\^o's formula, the density \(\rho_t\) solves the following nonlinear PDE:  
\begin{align}\label{FPE0}
    \p_t \rho_t=\Delta^{\frac{\alpha}{2}}\rho_t-\div (b(\rho_t)\rho_t),
\end{align}  
where for $\alpha=2$, $\Delta $ is the standard  Laplacian operator, and for $\alpha\in(1,2)$, $\Delta^{\frac{\alpha}{2}}$ is the fractional Laplacian given as the following non-local operator 
$$
\Delta^{\frac{\alpha}{2}}f(x):=\int_{\mR^d}\left(f(x+y)-f(x)-\1_{\{|y|\le 1\}}y\cdot\nabla f(x)\right)\frac{\dif y}{|y|^{d+\alpha}}.
$$

When $b$ is bounded and $u\to b(t,x,u)$ is Lipschitz uniformly in $t\in\mR_+$ and $x\in\mR^d$, the unique weak solution to \eqref{EQ:dDSDE:01} was constructed in \cite{WU2023SPA} as long as $\rho_0\in L^q$ with some $q>d/(\alpha-1)$. The aim of this paper is to approximate the solution to \eqref{EQ:dDSDE:01} and \eqref{FPE0} using the following moderately interacting 
$N$-particle system:
\begin{align}\label{MIP}
	\dif X_t^{N,i} &=b(t,X_t^{N,i},(\phi_N*\mu^N_t)(X_t^{{N,i}}))\dif t + \dif L^{\nalpha,i}_t,\quad i=1,\cdots,N,
\end{align}
where $\{L^{\alpha,i}\}_{i=1}^\infty$ is a family of i.i.d. standard $\mR^d$-valued $\alpha$-stable processes,
$$
\mu^N_t=\frac{1}{N}\sum_{i=1}^N \delta_{X^{N,i}_t}\quad \text{stands for the empirical measure,}
$$
and 
$$
\phi_N(x)=N^{\theta d}\phi(N^\theta x),\quad \text{for all $x\in\mR^d$}
$$
with some smooth compact supported probability density function $\phi$ and $\theta\in(0,\infty)$.

\subsection{Main results}
 Throughout the paper, we assume that 

 \noindent(\hypertarget{(H)}{$\boldsymbol{\rm H}$}) 
There are constants $ \kappa>0,$ and $\beta\in (0,1) $ such that for all \( \left(t, x,y, u,v\right) \in \mathbb{R}_{+} \times \mathbb{R}^{d} \times \mathbb{R}^{d}\times \mathbb{R}_{+}\times \mathbb{R}_{+} \),
\begin{equation*}
	\left|b\left(t, x, u\right)\right| \leqslant \kappa,\quad \text{and}\quad |b(t,x,u)-b(t,y,v)|\leq \kappa[|x-y|^{\beta}+|u-v|].
\end{equation*}
Moreover, \( \mu_{X_0}(\mathrm{d} x) = \rho_{0}(x) \mathrm{d} x \), where \( \rho_{0} \in L^{q}(\mathbb{R}^{d}) \) with some $q\in (\frac{d}{\alpha-1},\infty]$.

Under the condition (\hyperlink{(H)}{$\boldsymbol{\rm H}$}), a unique weak solution to dDSDE \eqref{EQ:dDSDE:01} on \( [0, T] \) was obtained for arbitrary time horizon \( T>0 \) in \cite{HAO2021JDE} for $\alpha=2$, and \cite{WU2023SPA} for $\alpha\in (1,2)$. Moreover, when $\rho_0\in \bC^\beta$ with $\beta>1-\alpha/2$, the pathwise uniqueness holds, and there is a unique strong solution (see \cite{WU2023SPA}). Here $\bC^\beta$ is the H\"older space, which will be introduced in Section \ref{sec:2}.

We present two main results in this paper. The first provides a quantitative convergence for the empirical measure, while the second concerns the convergence of the marginal single-particle.
\begin{theorem}\label{THM:CONVERGENCE:01}
  Let $\alpha\in(1,2]$, $T>0$, $\theta
  \in(0,\frac{1}{2d})$ and $\rho^N_t(x):=(\phi_N*\mu^N_t)(x)$. Assume that $($\hyperlink{(H)}{$\boldsymbol{\rm H}$}$)$ holds with {$\beta<\alpha-1-\frac{d}{q}$} and $\{X_0^{N,i}\}_{i=1}^N$ is a family of i.i.d. random variables with the common law $\mu_{X_0}$.
Then for any $m\in\mN$ and $\eps>0$, there exists a constant \( C=C(\alpha, T,\theta, \beta,q, \varepsilon, m)>0 \) such that for all \( N \geqslant 1 \) and $t\in(0,T]$,
\begin{equation}\label{EQ:CONVERGENCE:01}
\begin{split}
        \|\rho_{t}-\rho_{t}^{N}\|_{L^{m}\left(\Omega;L^{\infty}\right)} 
   & \le C{t^{-\frac{\beta+d/q}{\alpha}}}N^{-\theta\beta}+CN^{-1/2+\theta d+\eps}.
\end{split}
\end{equation}
Moreover, if $\rho_0\in\bC^\beta$, we can drop the condition $\beta<\alpha-1-\frac{d}{q}$ and have
\begin{equation}\label{EQ:CONVERGENCE:010}
\begin{split}
        \|\rho_{t}-\rho_{t}^{N}\|_{L^{m}\left(\Omega;L^{\infty}\right)} 
   & \lesssim N^{-\theta\beta}+N^{-1/2+\theta d+\eps}.
\end{split}
\end{equation}
\end{theorem}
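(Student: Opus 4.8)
The plan is to compare $\rho_t$ with $\rho_t^N:=\phi_N*\mu^N_t$ through their Duhamel representations and to split the difference into a deterministic mollification error (which produces the first term of \eqref{EQ:CONVERGENCE:01}) and a genuine fluctuation error (which produces the $N^{-1/2+\theta d+\eps}$ term). Let $P_t=\e^{t\Delta^{\frac{\alpha}{2}}}$ be the $\alpha$-stable semigroup. From \eqref{FPE0}, $\rho_t=P_t\rho_0-\int_0^t\nabla P_{t-s}\cdot\big(b(s,\cdot,\rho_s)\rho_s\big)\,\dif s$; applying It\^o's formula to $\phi_N(x-X^{N,i}_s)$ along \eqref{MIP}, using the symmetry of $\Delta^{\frac{\alpha}{2}}$, and averaging over $i$ gives
\begin{equation*}
  \rho_t^N=P_t\rho_0^N-\int_0^t\nabla P_{t-s}\cdot\Big(\phi_N*\big(b(s,\cdot,\rho^N_s)\mu^N_s\big)\Big)\,\dif s+\sM^N_t,
\end{equation*}
where $\rho_0^N=\phi_N*\mu^N_0$ and $\sM^N_t=\int_0^tP_{t-s}\,\dif\sM^N_s$ is the stochastic convolution of the $N$ particle martingales — the only term that does not average out. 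Since $\mu_t:=\rho_t\,\dif x=\Law(X_t)$, one then writes
\begin{equation*}
  \rho_t-\rho_t^N=\underbrace{\big(\rho_t-\phi_N*\rho_t\big)}_{=:\,I_t}+\underbrace{\phi_N*(\mu_t-\mu^N_t)}_{=:\,II_t}.
\end{equation*}

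The term $I_t$ is deterministic: a change of variables gives $\|I_t\|_{L^\infty}\le[\rho_t]_{\bC^\beta}\int_{\R^d}|y|^\beta\phi_N(y)\,\dif y\lesssim[\rho_t]_{\bC^\beta}\,N^{-\theta\beta}$. For $\rho_0\in L^q$, the nonlinear Duhamel formula for \eqref{FPE0} together with the smoothing of $P_t$ (cf.\ \cite{HAO2021JDE,WU2023SPA}) yields $[\rho_t]_{\bC^\beta}\lesssim t^{-\frac{\beta+d/q}{\alpha}}$, and here the restriction $\beta<\alpha-1-\frac dq$ is exactly what makes $\int_0^t(t-s)^{-\frac{1+\beta+d/q}{\alpha}}\|b(s,\cdot,\rho_s)\rho_s\|_{L^q}\,\dif s$ converge; for $\rho_0\in\bC^\beta$ the $\bC^\beta$-norm is propagated directly (via comparison with the linear equation, using $\|b\|_\infty\le\kappa$), so $[\rho_t]_{\bC^\beta}\lesssim1$ and the restriction is not needed. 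A second preliminary ingredient is the \emph{uniform in $N$} density bound $\|\rho^N_s\|_{L^m(\Omega;L^\infty)}\lesssim s^{-\frac{d}{\alpha q}}$ (for all $m$): I derive it from the mild formula for $\rho_t^N$ by a singular \gron\ argument, using that the self-interaction drift obeys $\|\phi_N*(b\mu^N_s)\|_{L^\infty}\le\kappa\|\rho^N_s\|_{L^\infty}$ with \emph{no} mollifier loss, that $\|P_t\|_{L^q\to L^\infty}\lesssim t^{-d/(\alpha q)}$, and that $\rho_0^N=\phi_N*\mu^N_0$ is bounded in $L^m(\Omega;L^q)$ uniformly in $N$ — a Rosenthal/concentration estimate for the i.i.d.\ sum $\frac1N\sum_i\phi_N(\cdot-X^{N,i}_0)$ in which the hypothesis $\theta d<1$ is used.

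For $II_t$ I subtract the mild formulas for $\phi_N*\mu_t$ and $\phi_N*\mu^N_t$:
\begin{equation*}
  II_t=P_t\phi_N*(\mu_0-\mu^N_0)-\int_0^t\nabla P_{t-s}\cdot\phi_N*\big(b(s,\cdot,\rho_s)\mu_s-b(s,\cdot,\rho^N_s)\mu^N_s\big)\dif s-\sM^N_t.
\end{equation*}
The initial term and the stochastic convolution $\sM^N_t$ are each bounded in $L^m(\Omega;L^\infty)$ by $CN^{-1/2+\theta d+\eps}$: for the former by a Rosenthal inequality combined with a Sobolev embedding $W^{k,p}\hookrightarrow L^\infty$ (or Kolmogorov's continuity criterion) to upgrade the pointwise-in-$x$ fluctuation bound to a $\sup_x$ bound at the cost of $N^\eps$, while tracking the $N^\theta$-losses coming from the derivatives of $\phi_N$; for $\sM^N_t$ by the Burkholder--Davis--Gundy inequality for jump (resp.\ continuous) martingales together with the $L^2$- and $L^\infty$-estimates of $\nabla P_r\phi_N$ and the same embedding. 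In the drift term, writing $b(s,\cdot,\rho_s)\mu_s-b(s,\cdot,\rho^N_s)\mu^N_s=b(s,\cdot,\rho_s)(\mu_s-\mu^N_s)+\big(b(s,\cdot,\rho_s)-b(s,\cdot,\rho^N_s)\big)\mu^N_s$ and using the mollifier commutator $\phi_N*(g\,\nu)=g\,(\phi_N*\nu)+O\!\big([g]_{\bC^\beta}N^{-\theta\beta}\,\phi_N*|\nu|\big)$ with $g=b(s,\cdot,\rho_s)$ (whose $\bC^\beta$-seminorm is $\lesssim1+[\rho_s]_{\bC^\beta}$ by $(\boldsymbol{\rm H})$), together with $\|\rho_s-\rho^N_s\|_{L^\infty}\le\|I_s\|_{L^\infty}+\|II_s\|_{L^\infty}$ and the a priori bounds above, one obtains (after H\"older's inequality in $\Omega$)
\begin{equation*}
  \big\|\phi_N*\big(b(s,\cdot,\rho_s)\mu_s-b(s,\cdot,\rho^N_s)\mu^N_s\big)\big\|_{L^m(\Omega;L^\infty)}\lesssim\big(1+s^{-\frac{d}{\alpha q}}\big)\,\|II_s\|_{L^m(\Omega;L^\infty)}+s^{-\frac{\beta+2d/q}{\alpha}}N^{-\theta\beta}.
\end{equation*}

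Setting $g_t:=\|II_t\|_{L^m(\Omega;L^\infty)}$, the three estimates and $\|\nabla P_r\|_{L^\infty\to L^\infty}\lesssim r^{-1/\alpha}$ combine into
\begin{equation*}
  g_t\le C\big(N^{-1/2+\theta d+\eps}+N^{-\theta\beta}t^{-\frac{\beta+d/q}{\alpha}}\big)+C\int_0^t(t-s)^{-\frac1\alpha}\big(1+s^{-\frac{d}{\alpha q}}\big)g_s\,\dif s,
\end{equation*}
where $q>\frac d{\alpha-1}$ and $\beta<\alpha-1-\frac dq$ guarantee both that the forcing integral $N^{-\theta\beta}\int_0^t(t-s)^{-1/\alpha}s^{-(\beta+2d/q)/\alpha}\dif s$ converges and that it is $\lesssim N^{-\theta\beta}t^{-(\beta+d/q)/\alpha}$ on $[0,T]$. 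A singular \gron\ lemma — both kernels $(t-s)^{-1/\alpha}$ and $(t-s)^{-1/\alpha}s^{-d/(\alpha q)}$ being integrable since $q>d/(\alpha-1)$ — then gives $g_t\lesssim N^{-1/2+\theta d+\eps}+N^{-\theta\beta}t^{-(\beta+d/q)/\alpha}$, and \eqref{EQ:CONVERGENCE:01} follows from $\|\rho_t-\rho_t^N\|_{L^m(\Omega;L^\infty)}\le\|I_t\|_{L^\infty}+g_t$; when $\rho_0\in\bC^\beta$ every $s$-singularity disappears and the restriction on $\beta$ is dropped, giving \eqref{EQ:CONVERGENCE:010}. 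Because the only constants used are the smoothing/contraction bounds for $P_t$, $\nabla P_t$, $\nabla P_r\phi_N$ and the Burkholder--Davis--Gundy constants, all uniform over $\alpha\in(1,2]$ (Brownian case $\alpha=2$ included), the rates do not depend on the driving noise. The main obstacle is twofold: proving the uniform-in-$N$ bound $\|\rho^N_s\|_{L^m(\Omega;L^\infty)}\lesssim s^{-d/(\alpha q)}$ — which a priori fails because $\rho_0^N$ is the convolution of an atomic measure, and is rescued by the concentration estimate that needs $\theta d<1$ — and upgrading the pointwise martingale/law-of-large-numbers bounds to $\sup_x$ bounds, where the exponent $-\tfrac12+\theta d$ (up to $N^\eps$) is pinned down; note also that only \hold\ (not Lipschitz) dependence of $b$ in $x$ is available, which is precisely why the mollifier-commutator route is used rather than a naive synchronous coupling (for which $r\mapsto r^\beta$ fails to be Lipschitz).
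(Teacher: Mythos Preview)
Your decomposition $\rho_t-\rho^N_t=I_t+II_t$ via the intermediate $\phi_N*\mu_t$ is a genuinely different organizing principle from the paper's. The paper works directly with $\cU^N_t=\rho_t-\rho^N_t$, writes a single Duhamel formula for it, and splits the drift integrand $H^N_s$ into three pieces. The crucial difference shows up in how the nonlinear term $\langle(b(\rho_s)-b(\rho^N_s))\phi_N(x-\cdot),\mu^N_s\rangle$ is handled. You bound it by $\|\rho_s-\rho^N_s\|_\infty\,\rho^N_s(x)$ and then invoke an \emph{a priori} estimate $\|\rho^N_s\|_{L^m(\Omega;L^\infty)}\lesssim s^{-d/(\alpha q)}$; the paper instead uses the simple but decisive trick
\[
[1\wedge\|\cU^N_s\|_\infty]\,\rho^N_s(x)\le[1\wedge\|\cU^N_s\|_\infty]\big(|\cU^N_s(x)|+\rho_s(x)\big)\le|\cU^N_s(x)|+\|\cU^N_s\|_\infty\,\rho_s(x),
\]
which trades the random $\rho^N_s$ for the deterministic $\rho_s$ at no cost. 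This lets the paper run a pathwise Volterra--Gronwall with a \emph{deterministic} kernel $(t-s)^{-(1+d/q)/\alpha}$ and only afterwards take $L^m(\Omega)$.

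Your route, by contrast, leads to a Gronwall for $g_t=\|II_t\|_{L^m(\Omega;L^\infty)}$ whose right-hand side contains $\|\,\|II_s\|_\infty\|\rho^N_s\|_\infty\,\|_{L^m(\Omega)}$. H\"older in $\Omega$ only gives $\|II_s\|_{L^{m'}(\Omega;L^\infty)}\|\rho^N_s\|_{L^{m''}(\Omega;L^\infty)}$ with $m'>m$, so the inequality does not close at a fixed $m$; iterating pushes the required moment to infinity, and the $L^\infty(\Omega)$ bound on $\rho^N_s$ is not available (indeed $\|\rho^N_0\|_\infty$ can be of order $N^{\theta d}$). One can try to salvage this by running Gronwall pathwise with the random kernel $(t-s)^{-1/\alpha}(1+\|\rho^N_s\|_\infty)$ and then controlling moments of the resulting iterated products, but that is real additional work you have not outlined. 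Separately, your claimed uniform bound $\|\rho^N_0\|_{L^m(\Omega;L^q)}\lesssim1$ via Rosenthal is plausible for each finite $m$ (and does use $\theta d<1$), but you should be aware that the constants blow up with $m$, so this does not yield the $L^\infty(\Omega)$ control the naive closure would need. In short: the architecture is sound, but the paper's $[1\wedge\cdot]$ substitution is exactly what lets the Gronwall close without the auxiliary $\rho^N$-bound, and you should either adopt that trick or make the moment-bootstrap argument precise.
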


\begin{theorem}\label{THM:MAINCONVERGENCE:01}
  Let $\alpha\in(1,2]$, $T>0$, and $\theta
  \in(0,\frac{1}{2d})$. Assume that $($\hyperlink{(H)}{$\boldsymbol{\rm H}$}$)$ holds and $\{X_0^{N,i}\}_{i=1}^N$ is a family of i.i.d. random variables with the common law $\mu_{X_0}$. 
  \begin{itemize}
      \item[(1)] {If $\beta<\alpha-1-\frac{d}{q}$}, then for any $\eps>0$, there exists a constant \( C=C(\alpha,T,\theta, \beta, q,m, \varepsilon)>0 \) such that for all \( N \geqslant 1 \),
\begin{equation}\label{EQ:CONVERGENCE:02}
\begin{split}
        \sup_{t\in[0,T]}\|\mP\circ(X^{N,1}_t)^{-1}-\mu_{t}\|_{\rm var} 
   & \le {{C}} N^{-\theta\beta}+CN^{-1/2+\theta d+\eps}.
\end{split}
\end{equation}
  \item[({2})] If $\beta>1-\alpha/2$ and $\rho_0\in \bC^\beta$,
then there are unique strong solutions $X$ and $(X^{N,1},...,X^{N,N})$ to dDSDE \eqref{EQ:dDSDE:01} driven by $L^{\alpha,1}$ and SDE \eqref{MIP} respectively. For any $m\in\mN$ and $\eps>0$, there exists a constant \( C=C(\alpha,T,\theta, \beta, m, \varepsilon)>0 \) such that for all \( N \geqslant 1 \),
\begin{equation} \label{EQ: THEOREM1.2-2}
\begin{split}
        \left\|\sup_{t\in[0,{T}]}|X^{N,1}_t-X_t|\right\|_{L^m(\Omega)} 
   & \le CN^{-\theta\beta}+CN^{-1/2+\theta d+\eps}.
\end{split}
\end{equation}
  \end{itemize}
\end{theorem}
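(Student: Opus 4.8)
Both parts reduce to Theorem~\ref{THM:CONVERGENCE:01}, which already provides a quantitative rate for $\|\rho_t-\rho^N_t\|_{L^m(\Omega;L^\infty)}$ with $\rho^N_t=\phi_N*\mu^N_t$. For part~(1) the plan is to work at the level of the one–particle time marginal density and close a weakly singular Gr\"onwall estimate via Duhamel's formula; for part~(2) the plan is to couple $X^{N,1}$ with a copy $X$ of the dDSDE solution driven by the \emph{same} process $L^{\alpha,1}$ and with the same initial value, remove the singular drift by a Zvonkin/It\^o--Tanaka transform built from the limiting drift, and again insert Theorem~\ref{THM:CONVERGENCE:01}.

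\textbf{Proof of (1).} Write $\rho^{N,1}_t$ for the density of $\mP\circ(X^{N,1}_t)^{-1}$; its existence and an $L^p$-bound are part of the density estimates for the particle system, since $X^{N,1}$ solves an SDE with bounded adapted drift driven by $L^{\alpha,1}$ and $\rho_0\in L^q$. Although $X^{N,1}$ is not Markov, taking expectations in It\^o's formula shows that $\rho^{N,1}$ solves, in the mild sense, $\p_t\rho^{N,1}=\Delta^{\alpha/2}\rho^{N,1}-\div(B^N_t\rho^{N,1})$ with conditional drift $B^N_t(x):=\mE[b(t,X^{N,1}_t,\rho^N_t(X^{N,1}_t))\mid X^{N,1}_t=x]$, $|B^N_t|\le\kappa$. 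Subtracting the mild form of \eqref{FPE0} (both start from $\rho_0$, so the semigroup terms cancel),
\[
\rho^{N,1}_t-\rho_t=-\int_0^t(\nabla p^\alpha_{t-s})*\big(B^N_s(\rho^{N,1}_s-\rho_s)+(B^N_s-b(s,\cdot,\rho_s(\cdot)))\rho_s\big)\,\dif s ,
\]
where $p^\alpha$ is the $\alpha$-stable kernel and $\|\nabla p^\alpha_{r}\|_{L^1}\lesssim r^{-1/\alpha}$. Taking $L^1$-norms, the first summand gives $\kappa\int_0^t(t-s)^{-1/\alpha}\|\rho^{N,1}_s-\rho_s\|_{L^1}\dif s$; for the second, $|B^N_s(x)-b(s,x,\rho_s(x))|\le\kappa\,\mE[|\rho^N_s(X^{N,1}_s)-\rho_s(X^{N,1}_s)|\mid X^{N,1}_s=x]$, so integrating against $\rho^{N,1}_s$ and replacing $\rho_s$ by $\rho^{N,1}_s$ at the cost $2\kappa\|\rho^{N,1}_s-\rho_s\|_{L^1}$,
\[
\|(B^N_s-b(s,\cdot,\rho_s(\cdot)))\rho_s\|_{L^1}\le\kappa\|\rho^N_s-\rho_s\|_{L^1(\Omega;L^\infty)}+2\kappa\|\rho^{N,1}_s-\rho_s\|_{L^1} .
\]
Since $\beta<\alpha-1-d/q$ the exponent $(\beta+d/q)/\alpha<1$, so by \eqref{EQ:CONVERGENCE:01} the convolution $\int_0^t(t-s)^{-1/\alpha}\|\rho^N_s-\rho_s\|_{L^1(\Omega;L^\infty)}\dif s$ is a Beta-type integral bounded on $[0,T]$ by $C(N^{-\theta\beta}+N^{-1/2+\theta d+\eps})$. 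A Henry-type singular Gr\"onwall inequality then gives $\sup_{t\in[0,T]}\|\rho^{N,1}_t-\rho_t\|_{L^1}\lesssim N^{-\theta\beta}+N^{-1/2+\theta d+\eps}$, which is \eqref{EQ:CONVERGENCE:02} since $\|\mP\circ(X^{N,1}_t)^{-1}-\mu_t\|_{\rm var}\le\|\rho^{N,1}_t-\rho_t\|_{L^1}$.

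\textbf{Proof of (2).} Under $\beta>1-\alpha/2$ and $\rho_0\in\bC^\beta$ one has the cited strong well-posedness and $\sup_{t\in[0,T]}\|\rho_t\|_{\bC^\beta}<\infty$; take $X$ to be the strong solution of \eqref{EQ:dDSDE:01} driven by $L^{\alpha,1}$ with $X_0=X^{N,1}_0$, so $\Law(X_t)=\mu_t$, and set $\bar b(t,x):=b(t,x,\rho_t(x))\in L^\infty_t\bC^\beta_x$. Because $\beta>1-\alpha/2$ (hence $\alpha+\beta-1>\alpha/2>1/2$), for $\lambda$ large the resolvent problem $(\lambda-\p_t-\Delta^{\alpha/2}-\bar b\cdot\nabla)u=\bar b$ on $[0,T]$ admits a solution with $\nabla u\in L^\infty_t\bC^{\alpha+\beta-1}_x$ and $\|\nabla u\|_{L^\infty_{t,x}}\le\tfrac12$, so $\Phi_t:=\mathrm{id}+u_t$ is bi-Lipschitz with Lipschitz inverse. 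Applying It\^o's formula to $\Phi_t(X^{N,1}_t)$ and $\Phi_t(X_t)$, the $\bar b$-drift is absorbed into $\lambda u$ and, writing $Z_t:=\Phi_t(X^{N,1}_t)-\Phi_t(X_t)$,
\begin{align*}
Z_t&=\lambda\int_0^t\big(u_s(X^{N,1}_s)-u_s(X_s)\big)\,\dif s\\
&\quad+\int_0^t(I+\nabla u_s)\big(b(s,\cdot,\rho^N_s(\cdot))-\bar b(s,\cdot)\big)(X^{N,1}_s)\,\dif s+\mathcal M_t ,
\end{align*}
where $\mathcal M_t$ is the purely discontinuous martingale with jump kernel $u_s(X^{N,1}_{s-}+y)-u_s(X^{N,1}_{s-})-u_s(X_{s-}+y)+u_s(X_{s-})$ (the $y$-linear parts of $\Phi$ cancel); for $\alpha=2$ it is instead a Brownian stochastic integral with integrand $\nabla u_s(X^{N,1}_s)-\nabla u_s(X_s)$. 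The first integrand is $\lesssim\|\nabla u\|_\infty|X^{N,1}_s-X_s|\lesssim|Z_s|$; the second is $\le2\kappa|\rho^N_s(X^{N,1}_s)-\rho_s(X^{N,1}_s)|\le2\kappa\|\rho^N_s-\rho_s\|_{L^\infty}$, whose $L^m(\Omega)$-norm is $\lesssim N^{-\theta\beta}+N^{-1/2+\theta d+\eps}$ uniformly in $s$ by \eqref{EQ:CONVERGENCE:010}. For $\mathcal M$ one uses the martingale functional (Burkholder/Kunita-type) inequalities together with the bound
\[
\int_{\mR^d}\big|u_s(x+y)-u_s(x)-u_s(x'+y)+u_s(x')\big|^2\frac{\dif y}{|y|^{d+\alpha}}\lesssim|x-x'|^{\eta},
\]
for an exponent $\eta=\eta(\alpha,\beta)\in(1,2]$ (with $\eta=2$ when $\alpha=2$), the crucial point $\eta>1$ being exactly where $\beta>1-\alpha/2$ enters; combining this with $|X^{N,1}_s-X_s|\lesssim|Z_s|$, the smallness of $\|\nabla u\|_\infty$, and a Gr\"onwall argument with a sufficiently large moment exponent $m$ yields $\big\|\sup_{t\le T}|Z_t|\big\|_{L^m}\lesssim N^{-\theta\beta}+N^{-1/2+\theta d+\eps}$; pushing back through the Lipschitz map $\Phi^{-1}$ gives \eqref{EQ: THEOREM1.2-2}.

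\textbf{Main obstacle.} The genuinely delicate step is the control of the jump martingale $\mathcal M$ when $\alpha\in(1,2)$ and $\beta<2-\alpha$: there the Zvonkin transform produces $\nabla u$ only of Hölder class $\bC^{\alpha+\beta-1}$ rather than Lipschitz, so the quadratic-variation estimate yields the super-linear but sub-quadratic power $|Z_s|^{\eta}$ with $\eta<2$, and attaining the \emph{same} rate as in Theorem~\ref{THM:CONVERGENCE:01} requires a careful choice of test function and moment exponent together with the use of $\|\nabla u\|_\infty\ll1$ — this is precisely where the threshold $\beta>1-\alpha/2$ is needed. A secondary, more routine point is to justify rigorously the density $\rho^{N,1}_t$ and the mild Fokker--Planck representation with conditional drift in part~(1), which rests on the density estimates for the interacting particle system.
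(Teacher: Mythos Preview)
Your forward–Fokker–Planck argument with the conditional drift $B^N_t(x)=\mE[b(t,X^{N,1}_t,\rho^N_t(X^{N,1}_t))\mid X^{N,1}_t=x]$ is correct, but it is a genuinely different route from the paper. The paper instead uses the It\^o--Tanaka (dual) trick: for each $\varphi\in C_b^\infty$ it solves the \emph{backward} linear PDE $\partial_t u=\Delta^{\alpha/2}u+b(T-\cdot,x,\rho_{T-\cdot}(x))\cdot\nabla u$, $u(0)=\varphi$, obtains $\|\nabla u(t)\|_\infty\lesssim t^{-1/\alpha}\|\varphi\|_\infty$ by Volterra--Gronwall, and then applies It\^o's formula to $u(T-t,X_t)$ and $u(T-t,X^{N,1}_t)$; the only error term is $\int_0^T(b(\rho^N_s)-b(\rho_s))(X^{N,1}_s)\cdot\nabla u(T-s)$, which is bounded directly by $\|\varphi\|_\infty\int_0^T(T-s)^{-1/\alpha}\|\rho^N_s-\rho_s\|_{L^1(\Omega;L^\infty)}\dif s$. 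The paper's approach is shorter and avoids the two points you flag as routine but do not carry out: existence and $L^1$–regularity of $\rho^{N,1}_t$ for a non-Markov particle, and the passage from the distributional FPE with random drift to a mild formulation. Your approach buys nothing extra here.

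\textbf{Part (2).} Your Zvonkin set-up is the same as the paper's, but your analysis of the jump martingale is off, and what you identify as the ``main obstacle'' is a phantom. Under $\beta>1-\alpha/2$ the Schauder estimate for the Zvonkin PDE gives $u\in L^\infty_t\bC^{1+\alpha/2+\eps}$ for some $\eps>0$, i.e.\ $\nabla u\in\bC^{\alpha/2+\eps}$; this is exactly the content of the threshold $\beta>1-\alpha/2$. From this one gets the \emph{linear} second-difference bound
\[
\big|\delta_z^{(1)}u_s(x)-\delta_z^{(1)}u_s(x')\big|\lesssim|x-x'|\big(\|\nabla u\|_\infty\1_{|z|>1}+[\nabla u]_{\bC^{(\alpha+\eps)/2}}|z|^{(\alpha+\eps)/2}\1_{|z|\le1}\big),
\]
so that $\int|\delta_z^{(1)}u_s(x)-\delta_z^{(1)}u_s(x')|^2\nu^{(\alpha)}(\dif z)\lesssim|x-x'|^2$; in your notation $\eta=2$ for \emph{all} $\alpha\in(1,2]$, not just $\alpha=2$. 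The BDG inequality then gives $\mE[\sup_{s\le t}|\mathcal M_s|^m]\lesssim\int_0^t\mE|X^{N,1}_s-X_s|^m\dif s$, and a plain Gronwall closes the estimate. There is no sub-quadratic power, no need for smallness of $\|\nabla u\|_\infty$ in the martingale term (that is used only to make $\Phi$ bi-Lipschitz), and no special choice of test function or moment exponent. Your claim that ``$\eta<2$ when $\beta<2-\alpha$'' confuses Lipschitz continuity of $\nabla u$ (not needed) with H\"older continuity of exponent $>\alpha/2$ (what is actually needed, and available precisely when $\beta>1-\alpha/2$).
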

\br
i) For the case (1), when $b=b(t,u)$ which is independent of $x$, and $u\to b(t,u)$ is Lipschitz, $($\hyperlink{(H)}{$\boldsymbol{\rm H}$}$)$ holds for all $\beta\in(0,1)$.

ii) For (2), the strong well-posedness of dDSDE \eqref{EQ:dDSDE:01} and SDE \eqref{MIP} are given in \cite{WU2023SPA}. 

iii) We can't address the case \(\theta \in [1/(2d),1/d)\), which is referred to as the “moderate” regime in \cite{Oelschlager1985} and represents an intermediate level of interaction. This regime will be the subject of future research.  

iv) In this paper, we establish quantitative approximation results for both the non-local nonlinear PDE \eqref{FPE0} and the density-dependent SDE \eqref{EQ:dDSDE:01} under the assumption that 
$b$ is H\"older continuous. To the best of our knowledge, for non-smooth $b$, there exist no prior quantitative results for either non-local quasilinear PDEs or dDSDEs driven by jump processes.

\er

\begin{example}
Consider the following non-local nonlinear FPE:
$$
\p_t\rho=\Delta^{\frac{\alpha}{2}}\rho+\div (b(\rho)\rho),
$$
where $b:\mR_+\to\mR^d$ satisfies $\sum_{i=1}^d|b'_i(r)|\leq\kappa$. 
Since the above equation can be written in the following transport form:
$$
\p_t\rho=\Delta^{\frac{\alpha}{2}}\rho+ (b(\rho)+b'(\rho)\rho)\cdot\nabla\rho,
$$
it is easy to see that by the maximum principle (see \cite[Theorem 6.1]{CHXZ} for example),
$$
\|\rho_t\|_\infty {\leq } \|\rho_0\|_\infty.
$$
Then our results can be applied rigorously by considering the truncated $b$ as 
%$F^n_i(r)=(-n)\vee F_i(r)\wedge n$.
$b_n(r)=b(r)\wedge n$, where $n>\|\rho_0\|_\infty$.
In particular, the above example covers the one dimensional fractional Burgers equation, i.e., $b(r)=r$.
In this case, if $\rho_0\in \bC^{\beta}$ with some $\beta\in(0,1)$, based on Theorem \ref{THM:CONVERGENCE:01}, for any smooth compact supported probability density function $\phi$ and $\theta\in(0,1/(2d))$,
\begin{align*}
    \sup_{x\in\mR^d}\left| \frac1{N^{1-d\theta}}\sum_{i=1}^N \phi(N^{\theta}(x-X^{N,i}_t))-\rho_t(x)\right|\lesssim N^{-\theta \beta}+N^{-1/2+\theta d+\eps},\ \text{ $\P$-a.s.}
\end{align*}
We believe that this is useful for numerical experiments.

\end{example}

\subsection{Related works and our contribution}

When $\alpha=2$, the following DDSDE has been extensively studied:
\begin{align}\label{DDSDE1}
    \dif X_t= (K*\mu_{X_t})(X_t)\dif t +\dif W_t,
\end{align}
where $W_t$ is a standard $d$-dimensional Brownian motion. The study of \eqref{DDSDE1} dates back to McKean's work \cite{McKean}, where a Lipschitz kernel $K$ was considered, and the pathwise convergence rate of order $N^{-1/2}$ for a single marginal particle was established. Later, under the assumption $K\in W^{-1,\infty}$ and additional conditions on $\div K$, a relative entropy estimate for the entire particle system was obtained in \cite{JW18}. More recently, in a general distributional dependent setting, the optimal convergence rate $N^{-1}$ in total variation distance as well as Wasserstein $W_2$ distance for a single marginal particle was proved in \cite{Lacker2023}, which includes \eqref{DDSDE1} with bounded $K$. It is extended for $K\in W^{-1,\infty}$ under further conditions on $\div K$ in \cite{Wa24}. 

In the special case where $K=\delta_0$ is the Dirac measure, the term $K*\mu_{X_t}(x)=\rho_t(x)$ reduces to the density $\rho_t(x)$, transforming DDSDE \eqref{DDSDE1} into a dDSDE. More generally, a class of McKean–Vlasov SDEs of Nemytskii type has been systematically studied in a series of works by Barbu and Röckner \cite{BR18, BR20, BR21, BR23, BR24} (see also their monograph \cite{BR}):
\begin{align}\label{MVSDENS}
    \dif X_t = b(t,X_t,\rho_t(X_t))\dif t+\sigma(t,X_t,\rho_t(X_t))\dif W_t,
\end{align}  
where $\sigma:\mR_+\times\mR^d\times \mR_+\to \mR^d \otimes \mR^d$ is a given measurable function. Under various assumptions on $b$ and $\sigma$, these works established existence and uniqueness results for the corresponding nonlinear Fokker–Planck equation and employed the superposition principle to construct weak solutions to \eqref{MVSDENS}.

For the special case $\sigma\equiv\mI_{d\times d}$, instead of using the superposition principle, a unique weak and strong solution was established in \cite{HAO2021JDE} via Euler approximations under the assumption that $b$ is bounded and the mapping $r\mapsto b(t,x,r)$ is Lipschitz continuous. This result was later extended in \cite{HRZ2024SIAMJMA}. Further well-posedness results were recently established in \cite{Le24}.

dDSDEs %\eqref{MVSDENS} 
have gained importance in various applications, including physics (e.g., porous media equations \cite{BCR13}), biology (e.g., Fisher-KPP equations \cite{Flandoli2019JMAA}), and deep learning (e.g., diffusion models \cite{Zh24}). Since the singularity of the Dirac distribution, investigating propagation of chaos for \eqref{in:MIP} presents significant challenges.

To address this, we consider the following moderately interacting particle system:
\begin{align}\label{in:MIP} \dif X_t^{N,i}=b(t,X_t^{N,i},(\phi_N*\mu^N_t)(X_t^{N,i}))\dif t+\sigma(t,X_t^{N,i},(\phi_N*\mu^N_t)(X_t^{N,i}))\dif W_t^i,
\end{align}
where $\phi_N(x):=N^{\theta d}\phi(N^\theta x)$ for some probability density function $\phi$ and $\theta\in(0,\infty)$.

In \cite{Oelschlager1985}, Oelschl\"ager established a qualitative estimate for any \(\theta \in (0,1/d)\) under the Lipschitz condition on the mapping \((x,r) \mapsto b(t,x,r)\), along with certain additional conditions on \(\phi\). We also refer to \cite{MELEARD1987SPA} for related work. In \cite{JM98, HRZ2024SIAMJMA}, a quantitative estimate was obtained for \(\phi_N(x) = (\varepsilon_N)^d \phi (\varepsilon_N x)\), where \(\varepsilon_N \sim (\ln N)^{\theta}\) for some \(\theta \in (0,1/d)\). Moreover, in \cite{JM98}, a fluctuation estimate was established for $(\varepsilon_N)^2(\mu^N_t-\rho_t)$, also see  \cite{OePTRF} for a fluctuation result  in the moderate model setting.

However, it is worth noting that these studies do not consider 
$\alpha$-stable processes. When the driving noise in McKean–Vlasov SDEs is a jump process, such as an $\alpha$-stable L\'evy process, studying well-posedness and propagation of chaos becomes crucial in modeling physical phenomena such as the Boltzmann equation \cite{Tanaka, MM13} and the surface quasi-geostrophic model \cite[Section 6]{HRZAoP}. However, classical techniques like relative entropy methods, used in \cite{JW18, La21, Wa24}, are no longer applicable in this setting. This motivates further investigation.

For DDSDE \eqref{DDSDE1}, in the regime $\alpha\in(0,1)$, the well-posedness, Euler approximation, and propagation of chaos were established for H\"older continuous $K$ in \cite{HRW24}. When $K*\mu_{X_t}(x)$ is replaced by a general function $B(t,x,\mu_{X_t})$, density estimates and quantitative propagation of chaos were obtained in \cite{Ca24} under the conditions that $B$ is H\"older continuous with respect to both the spatial and measure variables. However, these results do not consider the density-dependent case.

For dDSDEs driven by $\alpha$-stable processes, the non-local conservation law was studied in \cite{SimonJDDE2018}, where the authors showed that the empirical process converges to a deterministic measure, which in turn solves a non-local PDE—but without a convergence rate. In \cite{HAOETAL2024ARXIV}, following the semigroup approach, the second-named author and his collaborators derived quantitative estimates for second-order moderately interacting particle systems with convolution case $K*\mu_{X_t}$ driven by $\alpha$-stable process, but the assumptions on $\theta$ did not allow reaching $\theta<1/(2d)$. Moreover, since $L^1$ is not a UMD space, their result can't cover the non-degenerate model \eqref{EQ:dDSDE:01}.

% The conditions imposed on $\theta$ and the resulting convergence rates in their work are significantly more intricate compared to ours. Moreover, their study does not focus on the density-dependent case.

For dDSDEs \eqref{EQ:dDSDE:01} with $\alpha\in(1,2)$, the weak and strong well-posedness were established in \cite{WU2023SPA}, and a quantitative Euler approximation was derived in \cite{SONGandHAO2024ARXIV}.

{\subsection{Proof outline for main results} 
In this part, we provide an outline of the proof for main results, i.e., Theorem \ref{THM:CONVERGENCE:01} and Theorem \ref{THM:MAINCONVERGENCE:01}. The complete details can be found in Section \ref{SEC:EMPIRICAL} and Section \ref{sec:4} respectively. 

To establish Theorem \ref{THM:CONVERGENCE:01}, 
by It\^o's formula, we can observe that $\rho^N_t(x)=\frac{1}{N} \sum_{i=1}^N \phi_N (x-X_t^{N,i}) $ satisfies an stochastic partial differential equation (SPDE) (see \eqref{EQ:FPE:02} below). Meanwhile, our limit $\rho_t$ satisfies the nonlinear PDE \eqref{FPE0}. 
Then, it follows from the Duhamel's formula that the error function $\cU_t(x) = \rho_t(x) - \rho_t^N(x)$ satisfies
\begin{equation}\label{0329:00} 
	\begin{split}
		 \cU_t(x)&=-\int_{0}^{t} \nabla \cdot P_{t-s} [b(s,x,\rho_s(x)) \rho_s(x)-\langle  b(s,\cdot,\rho_s^N(\cdot)) \phi_N(x-\cdot), \mu_s^N(\cdot)  \rangle ] \dif s\\
		&\qquad -\int_{0}^{t} P_{t-s} \dif M_s^N(x)+P_t \cU_0^N(x),
	\end{split}
\end{equation}
where $P_t:=\exp(t\Delta^{\frac\alpha2})$ is the semigroup of the $\alpha$-stable process. 

 Next, we analyze each term in detail. For the first term, it can be decomposed into two components, one of which is controlled by $\|\cU_s\|_{L^\infty}$ itself, allowing us to apply Gronwall's inequality of Volterra-type. For the second component, the convergence rate with respect to $N$ is obtained using the H\"older regularity of $\rho_s$ and the drift $b$, where the H\"older regularity estimates for $\rho_s$, established in Lemma \ref{lem:25}, play a crucial role.

 For the stochastic integral term $\int_{0}^{t} P_{t-s} \dif M_s^N$, we apply the Burkholder-Davis-Gundy (BDG) inequality for Hilbert-valued martingales (cf. \cite{CR14, veraar2019pointwise}).

 Finally, for the initial value term $P_t \cU_0^N$, we decompose it into two parts: $P_t(\rho_0-\rho_0*\phi_N)$ and $P_t ((\rho_0-\rho_0^N)*\phi_N)$. The first term yields a convergence rate in $N$ via semigroup estimates in the Besov norm, while the second term is controlled using the independence of the initial data and the BDG inequality. %The key technical tool lies in the heat kernel estimation associated with the density (see Lemma \ref{lem:25} below). \\

As a result of Theorem \ref{THM:CONVERGENCE:01}, we can derive Theorem \ref{THM:MAINCONVERGENCE:01}. Specifically, note that the difference
$|b(s,x,\rho_s(x))-b(s,x, \rho_s^N(x))| $ is controlled by $ \| \rho_s-\rho_s^N \|_{L^\infty}$. This allows direct application of Theorem \ref{THM:CONVERGENCE:01}. For weak convergence, we use the It\^o-Tanaka trick. For pathwise convergence, we employ Zvonkin's transformation which is also used in \cite{HAOETAL2024ARXIV}.

% As a result of Theorem \ref{THM:CONVERGENCE:01}, we can derive Theorem \ref{THM:MAINCONVERGENCE:01}. More precisely, We observe that $|b(s,x,\rho_s^N(x))-b(s,x, \rho_s(x))| \lesssim \| \rho_s^N-\rho_s \|_{L^1(\Omega ; L^\infty)}$. Therefore, we can use the result in the Theorem \ref{THM:CONVERGENCE:01} directly. All in all, for the weak convergence, we use the \ito-Tanaka trick. And as for the path-wise convergence, we employ the Zvonkin’s transformation which is also used in \cite{MR2820071}.

%  To establish the path-wise convergence, Zvonkin’s transformation is introduced: For a fixed \(\lambda > 0\), a backward parabolic PDE is solved to construct a \(C^1\)-diffeomorphism \(\Phi_t(x) = x + u(t, x)\), which regularizes the drift via \cite{HRZAoP}. Note that  $|X_{s}^{N, 1}-X_{s}^{1}|$ is bounded by 
% $|\Phi_{s}(X_{s}^{N, 1})-\Phi_{s}(X_{s}^{1})|$. We then apply \ito's formula to processes \(\Phi_t(X_t^{N,1})\) and \(\Phi_t(X_t^1)\).
% The BDG inequality (from \cite{Schilling2019SPA}) bounds the martingale discrepancy term.
% %while the mismatch between the drifts \(b(s, \cdot, \rho_s^N)\) and \(b(s, \cdot, \rho_s)\) is controlled using pathwise estimates. 
% Gronwall’s inequality closes the argument by absorbing cumulative errors. 
}

\vspace{1em}

{\bf Structure of the paper}

\vspace{1em}

In Section \ref{sec:2}, we introduce H\"older and Besov spaces, which serve as the foundation for deriving heat kernel estimates. The most important result is presented in Lemma \ref{lem:25}. In Section \ref{SEC:EMPIRICAL}, following the approach used in \cite{HAOETAL2024ARXIV}, we give the proof of Theorem \ref{THM:CONVERGENCE:01}. Finally, in Section \ref{sec:4}, we prove our second main result Theorem \ref{THM:MAINCONVERGENCE:01}.  

\vspace{1em}

{\bf Notations}

\vspace{1em}

Throughout this paper, we use the symbol $C$ to denote constants, whose values may vary from one line to another. The notation $:=$ is used to signify a definition. We $A \lesssim B$ and $A \asymp B$ to indicate that there exists a constant $C \geq 1$ such that $A \leq C B$ and $C^{-1} B \leq A \leq C B$. We use the notation $a \sim b$ to express that $a$ and $b$ are of the same order, i.e., their ratio tends to $1$ asymptotically. Let $\mathbb{R}^d_{*}:= \mathbb{R}^d \setminus {\{0\}}$, $\R_{+}:=[0,\infty)$ and $B_r:=\{x\in \R^d:|x|\leq r\}$. The notation \( X \hookrightarrow Y \) is used to indicate that the space \( X \) can be embedded into the space \( Y \).
 The set of positive integers is denoted by $\N,$ and $\mathbb{N}_0$ is defined as $\N\cup\{0\}.$ Additionally,  we use $\mu_{X}$  to denote the distribution of the random variable $X$, and the notation $ X \stackrel{d}{=} Y$ is employed to signify that two random variables $X,Y$ are identically distributed. We use \( \|\mu-\nu\|_{\text {var }} \) to denote the total variation distance between probability measures \( \mu \) and \( \nu \).  The notation \( \mathbb{I}_{d \times d} \) denotes the \( d \)-dimensional identity matrix.
 Finally, $\mathcal{B}(E)$ represents the $\sigma$-algebra generated by the topology of the space $E$.

\section{Preliminaries}\label{sec:2}
In this section, we introduce some standard notations and heat kernel estimates that will be used later.

\subsection{H\"older and Besov spaces}
First, for $p\in[1,\infty)$, we use $\|\cdot\|_p$ to denote the usual $L^p$-norm.
For \( \beta \in(0,1] \) and \( f: \R^d \rightarrow \R \) we define the \( \beta \)-Hölder seminorm of \( f \) by
\begin{equation*}
	[f]_{\bC^{\beta}(\R^d)}:=\sup _{\substack{x, y \in \R^d \\ x \neq y}} \frac{|f(x)-f(y)|}{|x-y|^{\beta}}.
\end{equation*}
For \( \beta \in(0, \infty) \) we then denote by \( \bC^{\beta}(\R^d) \) the space of all functions such that for all \( \ell \in\left(\mathbb{N}_0\right)^{d} \) multi-indices with \( |\ell|<\beta \), the derivative \( \partial^{\ell} f \) exists, and
\begin{equation*}
	\|f\|_{\bC^{\beta}(\R^d)}:=\sum_{|\ell|<\beta} \sup _{x \in \R^d}\left|\partial^{\ell} f(x)\right|+\sum_{\beta-1<|\ell|<\beta}\left[\partial^{\ell} f\right]_{\bC^{\beta-|\ell|}(\R^d)}<\infty .
\end{equation*}
Hence the \( \bC^{\beta} \)-norm is stronger than the sup norm for any \( \beta>0 \). In the paper, we further define \( L^\infty(\mR^d):=\bC^{0}(\R^d) \) to be the space of all bounded measurable functions \( f: \R^d \rightarrow \R \) such that
\begin{equation*}
	\|f\|_\infty:=\|f\|_{\bC^{0}(\R^d)}:=\sup _{x \in \R^d}|f(x)|<\infty ,
\end{equation*}
By convention, for any $p\in[1,\infty]$, the space of $ p $-integrable functions on $ \mathbb{R}^d $ is denoted by $ L^p(\mathbb{R}^d) $, and the corresponding norm is denoted by $ \|\cdot\|_p $. When there is no ambiguity, $ L^p(\mathbb{R}^d) $ is simply denoted by $ L^p $.  Note that 
 $\|\cdot\|_{\bC^{0}}$ is actually  $\|\cdot\|_{\infty}.$

Let $\sS(\mR^{d})$ be the Schwartz space of all rapidly decreasing functions on $\mR^{d}$, and let $\sS'(\mR^{d})$ denote the dual space of $\sS(\mR^{d})$, known as Schwartz generalized function (or tempered distribution) space.
For any $f\in \sS(\mR^{d})$, we define the Fourier transform $\hat f$ and inverse Fourier transform $\check f$ respectively by
$$\hat f(\xi):=\ff{1}{(2\pi)^{d/2}}\int_{\mR^d}\e^{-i\xi\cdot x}f(x)\dif x, \ \xi\in\mR^d,$$
$$\check f(x):=\ff{1}{(2\pi)^{d/2}}\int_{\mR^d}\e^{i\xi\cdot x}f(\xi)\dif\xi, \ x\in\mR^d.$$
For $f\in\sS'(\mR^d)$, we define $\hat{f}$ and $\check{f}$ by the classical duality. 

To introduce the Besov space, we first introduce dyadic partitions of unity.  Let $\phi_{-1}$ be a symmetric
nonnegative $C^{\infty}$-function on $\mathbb{R}^d$ with
$$
\phi_{-1}(\xi)=1\ \mathrm{for}\ \xi\in B_{1/2}\ \mathrm{and}\ \phi_{-1}(\xi)=0\ \mathrm{for}\ \xi\notin B_{2/3}.
$$
For   $j\geq 0$, we define
\begin{align}\label{Phj}
\phi_j(\xi):=\phi_{-1}(2^{- (j+1)}\xi)-\phi_{-1}(2^{-j}\xi).
\end{align}
By %the very
definition, one sees that for $j\geq 0$, $\phi_j(\xi)=\phi_0(2^{-j}\xi)$ and
$$
\mathrm{supp}\,\phi_j\subset B_{2^{j+2}/3}\setminus B_{2^{j-1}},\quad\sum^n_{j=-1}\phi_j(\xi)=\phi_{-1}(2^{-(n+1)}\xi)\to 1,\quad n\to\infty.
$$

\bd
For  $j\geq -1$, the Littlewood-Paley block operator $\cR_j$ is defined on $\sS'({ \mR^d})$ by
$$
\cR_jf(x):=(\phi_j\hat f)\check{\,\,}(x)=\check\phi_j* f(x),
$$
with the convention $\cR_j\equiv0$ for $j\leq-2$.
In particular, for $j\geq 0$,
\begin{align}\label{Def2}
\cR_jf(x)=2^{ jd}\int_{{ \mR^d}}\check\phi_0(2^{j}y) f(x-y)\dif y.
\end{align}
\ed

For $j\geq -1$, by definition it is easy to see that
\begin{align}\label{KJ2}
\cR_j=\cR_j\widetilde\cR_j,\ \mbox{ where }\ \widetilde\cR_j:=\cR_{j-1}+\cR_{j}+\cR_{j+1},
%\mbox{ with } \cR_{-2}\equiv 0,
\end{align}
and $\cR_j$ is symmetric in the sense that
$$
\<g, \cR_j f\>=\< f,\cR_jg\>,\ \ f,g\in\sS'(\mR^d),
$$
where $\<\cdot,\cdot\>$ stands for the dual pair between $\sS'(\mR^d)$ and $\sS(\mR^d)$.
%Note that
%\begin{align}\label{SX4}
%\widetilde{\cR}_jf(x)=2^{jd}\int_{{ \mR^d}}\check{\widetilde{\phi_0}}(2^{aj}y) f(x-y)\dif y,\ \ j\geq 1,
%\end{align}
%where
%$$
%\widetilde{\phi_0}(\xi):={2^d}\phi_0(2\xi)+\phi_0(\xi)+{2^{-d}}\phi_0(2^{-a}\xi).
%$$

%%%%%%%%%%%%%%%%%%%%%%%%%%%%%%%%%%%%%%%%%%%%%%%%%%%%%%%%%%
%{the following should be changed to anisotropic case}
Now we recall the definition of Besov spaces (see \cite{BCD11} for more details).
\bd\label{Def25}
Let $p,q\in[1,\infty]$ and $s\in\mR$. The Besov space $\bB^{s}_{p,q}$ is defined by
$$
\bB^{s}_{p,q}:=\left\{f\in\sS'(\mR^d): \|f\|_{\bB^{s}_{p,q}}:=
\left(\sum_{j\ge -1} 2^{sjq}\|\cR_j f\|_p^q\right)^{1/q}<\infty\right\}.
$$
 We denote $\bB^s_p:=\bB^s_{p,\infty}$. %And for $p=q=\infty$, we simply denote $\bC^s:=\bB^{s}_{\infty,\infty}$.
\ed

 For a function $f: \mathbb{R}^{d} \rightarrow \mathbb{R}$ and $h\in\mR^d$, the $ 1^{\text{st}} $-order difference operator is defined by
\begin{equation*}
	{\delta_{h}^{(1)} f(x):}=f(x+h)-f(x), \quad \forall x, h \in \mathbb{R}^{d} .
\end{equation*}
For $n \in \mathbb{N} $, the $n^{\text {th}} $-order difference operator is defined recursively by
\begin{equation*}
	\delta_{h}^{(n)} f(x)={\delta_{h}^{(1)}} \circ \delta_{h}^{(n-1)} f(x).
\end{equation*}
\br
For $s>0$, an equivalent characterization of $\bB^s_{p,q}$ is given by (see \cite[P74, Theorem 2.36]{BCD11} or \cite[Theorem 2.7]{HZZZ21})
$$
\|f\|_{\bB^{s}_{p,q}}\asymp\left(\int_{|h|\leq 1}
\left(\frac{\|\delta^{([s]+1)}_hf\|_p}{|h|^{s}}\right)^q\frac{\dif h}{|h|^d}\right)^{1/q}
+\|f\|_p.
$$
In particular,  for any $s\in(0,1)$ and $p\in[1,\infty]$, there is a constant $C=C(s,d,p)>0$ such that
$$
\|f(\cdot+h)-f(\cdot)\|_p\leq C\|f\|_{\bB^{s}_{p,\infty}} (|h|^s\wedge 1),
$$
and for any $s_0\in\mR$,
\begin{align}\label{Ho1}
\|f(\cdot+h)-f(\cdot)\|_{\bB^{s_0}_{p,\infty}}\leq C\|f\|_{\bB^{s_0+s}_{p,\infty}} (|h|^s\wedge 1).
\end{align}
From this estimate, for $s>0$ one sees that $\bB^s_\infty$ coincides with the classical H\"older space as long as $s\notin\mN$. Moreover, for any $n\in\mN$,
\begin{align*}
    \|f\|_{\bB^n_{\infty}}\lesssim_{n,d} \|f\|_{\bC^n}.
\end{align*}
We refer to \cite[Remark 3.4]{WU2023SPA} for more details on case $n\in\mN$.
\er

Below we recall some well-known facts about Besov spaces and $L^p$ spaces (see \cite[Lemma 2.4]{HZ24}).
\bl\label{lemB1}
\begin{enumerate}
\item For any $p\in[1,\infty]$,  $s'>s$ and $q\in[1,\infty]$, it holds that
\begin{align}\label{AB2}
\bB^{s'}_{p,\infty}\hookrightarrow \bB^{s}_{p,1}\hookrightarrow \bB^{s}_{p,q}\hookrightarrow \bB^{s}_{p,\infty},\ \bB^{0}_{p,1}\hookrightarrow L^p\hookrightarrow\bB^{0}_{p,\infty}.
\end{align}
\iffalse
\item For any  $\alpha\in\mR$ and $p\in(1,\infty)$, we have the following embedding %(see \cite[Theorem 6.4.4]{BL76})
\begin{align}\label{Con}
\left\{
\begin{aligned}
&\bB^\alpha_{p,2}\hookrightarrow \bH^\alpha_p,\ \ p\in[2,\infty),\\
&\bH^\alpha_p\hookrightarrow\bB^\alpha_{p,2} ,\ \ p\in(1,2],
\end{aligned}
\right.
\end{align}
and for  $\alpha\in\mR$ and $p\in(1,\infty]$ %(see \cite[Theorem 6.2.4]{BL76}),
$$
\bB^{\alpha}_{p,1}\subset\bH^\alpha_p\subset\bB^{\alpha}_{p,\infty}.
$$
\fi
\item For $1\leq p_1\leq p\leq\infty$, $q\in[1,\infty]$ and $\alpha\leq\alpha_1-\tfrac{d}{p_1}+\tfrac{d}{p}$, it holds that %(see \cite[Theorem 6.5.1]{BL76})
\begin{align}\label{Sob1}
\|f\|_{\bB^{\alpha}_{p,q}}\lesssim\|f\|_{\bB^{\alpha_1}_{p_1,q}}.
\end{align}
%and for $1<p_1\leq p<\infty$,
%\begin{align}\label{Sob11}
%\|f\|_{\bH^{\alpha}_{p}}\lesssim\|f\|_{\bH^{\alpha_1}_{p_1}}.
%\end{align}
\iffalse
\item Let $\alpha\in[-1,0)$ and $p\in(1,\infty)$. For any $p_1,p_2\in(1,\infty)$ satisfying
$$
p_1\geq p,\ p_2\geq\tfrac{p_1}{p_1-1},\ \ \tfrac{1}{p}\leq\tfrac{1}{p_1}+\tfrac{1}{p_2}<\tfrac{1}{p} -\tfrac{\alpha}{d},
$$ 
there is a constant $C>0$ such that
for all $f\in \bH^{\alpha}_{p_1}$ and $g\in \bH^{-\alpha}_{p_2}$ (see \cite[Lemma 2.2]{ZZ17}),
\begin{align}\label{EW2}
\|fg\|_{\bH^{\alpha}_p}\le C\|f\|_{\bH^{\alpha}_{p_1}} \|g\|_{\bH^{-\alpha}_{p_2}},
\end{align}
and
\begin{align}\label{EW-2}
\|fg\|_{\bH^{-1}_\infty}\leq\|f\|_{\bH^{-1}_\infty} \|g\|_{\bH^1_\infty}.
\end{align}
\fi
\end{enumerate}
\el
For $N\geq 1$ and $\theta>0$, we recall that
$\phi_N(x):=N^{\theta d} \phi(N^{\theta}x)$. Then similar as \cite[Lemmas B.1, B.2]{HAOETAL2024ARXIV} we have the following scaling inequality
\begin{equation}\label{EQ:DILATION:03}
  \|\phi_N\|_{\bB^{\beta}_{p,q}}\lesssim_{\beta,p,q,d} N^{\theta(\beta+d-\frac{d}{p})},\ \beta>0,\ p,q\in[1,\infty].
\end{equation}
Moreover, it follows from \eqref{Ho1} that for any $\beta\in\mR$, $p\in[1,\infty]$ and $\gamma\in(0,1)$, 
\begin{align}\label{0129:01}
    \|f*\phi_N-f\|_{\bB^{\beta}_p}\le \int_{\mR^d}\|\delta_y^{(1)}f\|_{\bB^\beta_p}\phi_N(y)\dif y\lesssim \||\cdot|^\gamma \phi_N\|_1\|f\|_{\bB^{\beta+\gamma}_p}\lesssim N^{-\gamma \theta}\|f\|_{\bB^{\beta+\gamma}_p}.
\end{align}

\subsection{$\alpha$-stable process}
For any $d$-dimensional \levy~process \( L \), i.e., a càdlàg process on \( \mathbb{R}^d \), such that \( L_0 = 0 \) almost surely, and the increments of \( L \) are independent and identically distributed,
 the associated Poisson random measure $\cN(\d s,\d z)$ is defined by
\begin{equation*}
	\cN((0, t] \times \Gamma):=\sum_{s \in(0, t]} \mathbbm{1}_{\Gamma}\left(L_{s}-L_{s-}\right), \quad \forall \Gamma \in \mathcal{B}\left(\mathbb{R}^{d}_{*}\right), t>0,
\end{equation*}
and the Lévy measure (i.e., the Poisson random measure's expectation during the unit time) is given by
\begin{equation*}
	\nu(\Gamma):=\mathbb{E} \cN((0,1] \times \Gamma) .
\end{equation*}
Then, the compensated Poisson random measure is defined by
\begin{equation*}
	\tilde{\cN}(\mathrm{d} s, \mathrm{d} z):=\cN(\mathrm{d} s, \mathrm{d} z)-\nu(\mathrm{d} z) \mathrm{d} s,
\end{equation*}
the compensator is defined by $\hat{\cN}(\d s,\d z) :=\nu(\d z)\d s.$

For \( \alpha \in(0,2) \), a \levy~process \( L^{\alpha} \) is called a symmetric and rotationally invariant \( \alpha \)-stable process if the \levy~ measure has the form
\begin{equation*}
    \nu^{(\alpha)}(\mathrm{d} z)=c|z|^{-d-\alpha} \mathrm{d} z
\end{equation*}
with some specific constant \( c=c(d, \alpha)>0 \). In this paper, we only consider the symmetric and rotationally invariant \( \alpha \)-stable process. Without causing confusion, we simply call it the \( \alpha \)-stable process, and assume that \( \nu^{(\alpha)}(\mathrm{d} z)=|z|^{-d-\alpha} \mathrm{d} z \) here and after.
%For any $\gamma\in \R,$ 
It is easy to see that for any $\gamma_1<\alpha<\gamma_2$, 
\begin{equation}\label{EQ:0116:10}
    \int_{\R^d_*} \left[|z|^{\gamma_1}\wedge |z|^{\gamma_2}\right] \nu^{(\alpha)}(\mathrm{d} z) <\infty.
\end{equation}
\iffalse
In fact, when $\gamma>\alpha$, then $\alpha+1-\gamma<1,$ it follows that
\begin{align*}
            \int_{\R^d_*} |z|^{\gamma} \nu^{(\alpha)}(\d z) &=\int_{\R^d_*} \frac{|z|^{\gamma}}{|z|^{\alpha +d}}\d z\\
        &= \int_{0}^{\infty}\d r \int_{\partial B(0,r)} \frac{1}{r^{\alpha+d-\gamma}}\d S\\
        &=\frac{d \pi^{d/2}}{\Gamma(d/2+1)}\int_{0}^{\infty}\frac{1}{r^{\alpha+1-\gamma} }\d r<\infty.
\end{align*}
\fi
By \cite[Proposition 2.5-(xii), Proposition 28.1]{SATO2013}, \( L_{t}^{\alpha} \) admits a smooth density function \( q_{\alpha}(t, \cdot) \) given by Fourier's inverse transform
\begin{equation}\label{EQ:KERNEL:01}
    q_{\alpha}(t, x)=(2 \pi)^{-d / 2} \int_{\mathbb{R}^{d}} \e^{-i x \cdot \xi} \mathbb{E} \e^{i \xi \cdot L_{t}^{\alpha}} \mathrm{d} \xi, \quad \forall t>0
\end{equation}
and the partial derivatives of \( q_{\alpha}(t, \cdot) \) at any orders tend to 0 as \( |x| \rightarrow \infty \).
Moreover, since the \( \alpha \)-stable process \( L \) is a self-similar process, 
\begin{equation*}
	\left(\lambda^{-1 / \alpha} L_{\lambda t}^{\alpha
    }\right)_{t \geqslant 0} \stackrel{d}{=}\left(L_{t}^{\alpha}\right)_{t \geqslant 0}, \quad \forall \lambda>0,
\end{equation*}
it turns out  that
\begin{equation*}
	q_{\alpha}(t, x)=t^{-d / \alpha} q_{\alpha}\left(1, t^{-1 / \alpha} x\right) .
\end{equation*}
It is well know that %(\textcolor{red}{see \cite{SONGandHAO2024ARXIV} for example}) 
for any $k\in\mN_0$ and $p\in[1,\infty]$,
\begin{align*}
    \|\nabla^k q_\alpha(t,\cdot)\|_p=t^{-\frac{k+d/p}{\alpha}}\|\nabla^k q_\alpha(1,\cdot)\|_p\lesssim t^{-\frac{k+d/p}{\alpha}}.
\end{align*}
Note that \( q_{\alpha}(t, x) \) is also the heat kernel (or the fundamental solution) of the fractional Laplacian \( \Delta^{\alpha / 2} \), i.e., 
\begin{equation*}
    \partial_{t} q_{\alpha}(t, x)=\Delta^{\alpha / 2} q_{\alpha}(t, x), \quad \lim _{t \downarrow 0} q_{\alpha}(t, x)=\delta_{0}(x),
\end{equation*}
where \( \delta_0(x)\) is the Dirac-delta function. We also have the following Chapman-Kolmogorov (C-K) equations:
\begin{equation}\label{EQ:CKEQ:01}
    \left(q_{\alpha}(t) * q_{\alpha}(s)\right)(x)=\int_{\mathbb{R}^{d}} q_{\alpha}(t, x-y) q_{\alpha}(s, y) \mathrm{d} y=q_{\alpha}(t+s, x), \quad t, s>0.
\end{equation}

\subsection{Heat kernel estimates}
In this part, we always assume that \( \alpha \in(1,2) \) and the assumption (\hyperlink{(H)}{$\boldsymbol{\rm H}$}) holds. It follows from \cite{WU2023SPA} that there is a unique weak solution to dDSDE \eqref{EQ:dDSDE:01}. Moreover, by It\^o's formula, the density of the time marginal law of the solution satisfies the following nonlinear Fokker-Planck equation in the distributional sense:	
\begin{equation}\label{0105:00}
	\partial_t \rho_t(x)=\Delta^{\frac{\alpha}{2}} \rho_t(x)-\div(b(t,x,\rho_t(x)) \rho_t(x)).
\end{equation}
Let $P_t$ be the transition semi-group of the  \levy~process $L^{\alpha}$ whose generator is $\Delta^{\frac{\alpha}{2}}.$ Moreover, the action of the semigroup \( P_t \) on a function \( f \) can be characterized by the convolution of \( f \) with the heat kernel \( q_{\alpha}(t, \cdot) \), which is defined by  \eqref{EQ:KERNEL:01},
\begin{equation*}
	P_tf = q_{\alpha}(t,\cdot)*f.
\end{equation*}
Based on the Duhamel's formula (for example, see \cite[Lemma 3.1]{CHZ20}), we have
\begin{align}\label{0105:02}
    \rho_t(x)=P_t\rho_0(x)+\int_0^t P_{t-s}\div[b(s,\cdot,\rho_s)  \rho_s](x)\dif s.
\end{align}
Furthermore, the following estimate for the semigroup $P_t$ is well-known (cf. \cite[Lemma 2.14]{HRZAoP}): for any $\beta_1,\beta_2\in\mR$, $1\le p_1\le p_2\le \infty$ and $t\in[0,T]$,
\begin{align}\label{0129:02}
   \1_{ \{ \beta_2-\beta_1+d/p_1-d/p_2\ne0 \} }\|P_t f\|_{\bB^{\beta_2}_{p_2,1}}+\|P_t f\|_{\bB^{\beta_2}_{p_2}}\lesssim_{\beta_1,\beta_2,p_1,p_2,T}t^{-\frac{(\beta_2-\beta_1+d/p_1-d/p_2)\vee0}{\alpha}}\|f\|_{\bB^{\beta_1}_{p_1}}. 
\end{align}
In particular,
\begin{align}\label{0105:03}
    \|P_t f\|_{p_{2}}\lesssim_{p_1,p_2} t^{\frac{d}{\alpha p_{2}}-\frac{d}{\alpha p_{1}}}\|f\|_{p_{1}},\quad 1\le p_{1}\le p_{2}\le\infty,
\end{align}
and for any $0\le \gamma_1\le \gamma_2$,
\begin{align}\label{0105:04}
 \|P_t f\|_{\bC^{\gamma_2}}\lesssim_{\gamma_1,\gamma_2} t^{\frac{\gamma_1-\gamma_2}{\alpha }}\|f\|_{\bC^{\gamma_1}},
\end{align}
where for $k\in\mN_0$, $\|f\|_{\bC^k}:=\sum_{i=0}^k\|{\nabla^i }f\|_\infty$.

Here we give the following heat kernel estimates. 
\begin{lemma}[Density estimate]\label{lem:25}
 	Let $\beta\in(0,1)$ and $q\in(\frac{d}{\alpha-1},\infty]$ be given in $($\hyperlink{(H)}{$\boldsymbol{\rm H}$}$)$. Then for any $p\in [q,\infty]$, there is a constant $C=C(d,\alpha,\kappa,p,q)>0$ such that
\begin{equation}\label{0105:01}
	\|\rho_t\|_{p}\le C t^{\frac{d}{\alpha p}-\frac{d}{\alpha q}}\|\rho_0\|_{q}.
\end{equation}
Moreover, for any $\gamma\in (0,\alpha-1+\beta]$, there is a constant $C=C(d,\alpha,\kappa,p,q, \|\rho_0\|_{q})>0$ such that
\begin{equation}\label{0103:01}
    \|\rho_t\|_{\bC^{\gamma}}\le Ct^{-\big(\frac{\gamma
    }{\alpha}+\frac{d}{\alpha q}\big)},
\end{equation}
\end{lemma}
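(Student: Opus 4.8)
The plan is to derive both estimates from the Duhamel representation \eqref{0105:02}, $\rho_t = P_t\rho_0 + \int_0^t P_{t-s}\,\div[b(s,\cdot,\rho_s)\rho_s]\,\mathrm{d}s$, combined with the smoothing estimates \eqref{0129:02}--\eqref{0105:04} for the $\alpha$-stable semigroup, the boundedness $|b|\le\kappa$ from $(\hyperlink{(H)}{\boldsymbol{\rm H}})$, and a Volterra-type (generalized) Gr\"onwall argument. Throughout I would fix the time horizon $T$ and track only the singularity in $t$ at the rate indicated, with all implicit constants allowed to depend on $d,\alpha,\kappa,p,q,T$ and (for the second estimate) on $\|\rho_0\|_q$.

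\textbf{Step 1: the $L^p$ estimate \eqref{0105:01}.} For $p\in[q,\infty]$ apply $\|\cdot\|_p$ to \eqref{0105:02}. The free term is handled by \eqref{0105:03}: $\|P_t\rho_0\|_p\lesssim t^{\frac{d}{\alpha p}-\frac{d}{\alpha q}}\|\rho_0\|_q$. For the Duhamel term write $\div P_{t-s}=P_{t-s}\div$ in the distributional sense and use that $\div$ acting on the kernel costs one spatial derivative; more precisely, using \eqref{0129:02} with $\beta_2=0$, $\beta_1=-1$ and exponents $(p_1,p_2)$, or equivalently estimating via $\|P_{t-s}\div g\|_p\lesssim (t-s)^{-\frac1\alpha-\frac{d}{\alpha}(\frac1r-\frac1p)}\|g\|_r$, and then bounding $\|b(s,\cdot,\rho_s)\rho_s\|_r\le\kappa\|\rho_s\|_r$. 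Choosing $r$ appropriately (e.g. $r=q$ throughout, which is legitimate since $q>d/(\alpha-1)$ guarantees $1/\alpha+d/(\alpha q)<1$ and the time integral converges) leads to a closed inequality of the form $\|\rho_t\|_p\lesssim t^{\frac{d}{\alpha p}-\frac{d}{\alpha q}}\|\rho_0\|_q+\int_0^t(t-s)^{-1/\alpha-d/(\alpha q)+d/(\alpha p)}\|\rho_s\|_q\,\mathrm{d}s$. One first specializes to $p=q$ to get a self-referential estimate on $\|\rho_s\|_q$, closes it by the singular Gr\"onwall lemma (the exponent $-1/\alpha-d/(\alpha q)>-1$ exactly because $q>d/(\alpha-1)$), obtaining $\sup_{s\le T}\|\rho_s\|_q\lesssim\|\rho_0\|_q$; then one feeds this back into the general-$p$ inequality to conclude \eqref{0105:01}.

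\textbf{Step 2: the H\"older estimate \eqref{0103:01}.} Now bootstrap into $\bC^\gamma=\bB^\gamma_\infty$ for $\gamma\in(0,\alpha-1+\beta]$. Apply $\|\cdot\|_{\bB^\gamma_\infty}$ to \eqref{0105:02}. The free term: by \eqref{0129:02} with $\beta_2=\gamma$, $\beta_1=0$, $p_1=q$, $p_2=\infty$, $\|P_t\rho_0\|_{\bB^\gamma_\infty}\lesssim t^{-(\gamma+d/q)/\alpha}\|\rho_0\|_q$, which is already the target rate. For the Duhamel term I would split the drift coefficient as $b(s,x,\rho_s(x))\rho_s(x)$ and estimate $\|P_{t-s}\div[b(s,\cdot,\rho_s)\rho_s]\|_{\bB^\gamma_\infty}$ using \eqref{0129:02} with $\beta_1$ chosen so that the source term lies in a Besov space we control. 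The natural choice is to put $b(s,\cdot,\rho_s)\rho_s$ into $\bB^{\gamma}_\infty$ when $\rho_s$ already has $\bC^\gamma$ regularity (using that $u\mapsto b(t,x,u)$ is Lipschitz and $x\mapsto b(t,x,u)$ is $\beta$-H\"older, so $x\mapsto b(s,x,\rho_s(x))$ is $\min(\beta,\gamma)$-H\"older and the product is $\min(\beta,\gamma)$-H\"older/Besov), then $\|P_{t-s}\div g\|_{\bB^\gamma_\infty}\lesssim (t-s)^{-(1+\gamma-\eta)/\alpha}\|g\|_{\bB^\eta_\infty}$ with $\eta\le\beta\wedge\gamma$; the resulting time exponent $1+\gamma-\eta$ must stay below $\alpha$, i.e. we need $\gamma<\alpha-1+\eta$, matching the constraint $\gamma\le\alpha-1+\beta$ (one takes $\eta$ slightly below $\beta$ if $\gamma$ is close to the endpoint, or argues at the endpoint via the $\bB^\gamma_{\infty,1}$ refinement in \eqref{0129:02}). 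Combined with the $L^q$-to-$L^\infty$ bound from Step 1 to control $\|\rho_s\|_\infty\lesssim s^{-d/(\alpha q)}$ when estimating the low-regularity part of $g$, this yields $\|\rho_t\|_{\bC^\gamma}\lesssim t^{-(\gamma+d/q)/\alpha}+\int_0^t(t-s)^{-(1+\gamma-\eta)/\alpha}s^{-(\eta+d/q)/\alpha}\|\rho_s\|_{\bC^\eta}^{?}\,\mathrm{d}s$, and a singular Gr\"onwall / Volterra iteration (first establishing a lower-order H\"older bound, say $\bC^{\eta}$ with small $\eta$, then bootstrapping up in finitely many steps to $\gamma$) closes the estimate with the stated blow-up rate. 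The Beta-function identity $\int_0^t(t-s)^{-a}s^{-b}\,\mathrm{d}s=t^{1-a-b}B(1-a,1-b)$ for $a,b<1$ is what keeps the exponent $-(\gamma+d/q)/\alpha$ reproduced at each iteration step.

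\textbf{Main obstacle.} The delicate point is the H\"older estimate near the endpoint $\gamma=\alpha-1+\beta$: there the time singularity $(t-s)^{-(1+\gamma-\eta)/\alpha}$ in the Duhamel term is only barely integrable (exponent approaching $-1$ as $\eta\to\beta$), so one must either keep $\eta$ strictly inside $(0,\beta)$ and accept $\gamma<\alpha-1+\eta<\alpha-1+\beta$ for the interior case and treat the endpoint by the $\bB^{\gamma}_{\infty,1}$ gain in \eqref{0129:02} (which is exactly why that indicator term is recorded there), or run a careful bootstrap where the H\"older exponent is increased in small increments so that each individual step has a strictly subcritical exponent. Getting the product rule $\|b(s,\cdot,\rho_s(\cdot))\rho_s(\cdot)\|_{\bB^\eta_\infty}\lesssim (1+\|\rho_s\|_{\bC^\eta})\|\rho_s\|_{\bC^\eta}$ cleanly — using that $b$ is only H\"older in $x$, not smooth — and verifying that the resulting nonlinear Volterra inequality still closes (the nonlinearity in $\|\rho_s\|_{\bC^\eta}$ is only linear once $\sup\|\rho_s\|_\infty$ is known from Step 1, since the Lipschitz-in-$u$ part contributes $\|\nabla\rho_s\cdot(\text{stuff})\|$ which is genuinely the quantity being bounded) is the other place where care is needed.
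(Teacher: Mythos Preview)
Your Step 1 is essentially the paper's argument with a harmless variation: you put the drift in $L^q$ and close first at $p=q$, while the paper keeps it in $L^p$ and applies the Volterra--Gr\"onwall lemma once; both routes work.

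For Step 2, your bootstrap is close in spirit to the paper's induction, and would indeed reach any $\gamma<\alpha-1+\beta$. Two differences are worth noting. First, the paper uses a \emph{time-shift trick}: it sets $\rho_{s,t}:=\rho_{s+t}$, runs Duhamel from time $s$ (so the ``initial datum'' $\rho_s$ is already in $L^\infty$ by Step~1), and then specializes $s=t/2$. This way the singular factor $\|\rho_{t/2+r}\|_\infty\lesssim (t/2)^{-d/(\alpha q)}$ is pulled out of the integral, leaving only the semigroup singularity $(t/2-r)^{-(1+\beta_0)/\alpha}$ to integrate; the target rate $t^{-(\gamma+d/q)/\alpha}$ then falls out cleanly at every induction step. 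Your direct Beta-function approach can be made to work for interior $\gamma$, but the bookkeeping is messier. Second, the paper's induction climbs only up to $\bC^\beta$ (since $b$ is merely $\beta$-H\"older in $x$, the source $b(\cdot,\rho)\rho$ cannot be placed in any better space), and then makes a single jump to $\bC^{\alpha-1+\beta}$ by a different mechanism.

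That last jump is where your proposal has a genuine gap. At the endpoint $\gamma=\alpha-1+\beta$, with source in $\bC^\eta$ and $\eta\le\beta$ forced, the Duhamel kernel is $(t-s)^{-(1+\gamma-\eta)/\alpha}$ with exponent exactly $-1$, so the time integral diverges. Your suggested fix via the $\bB^\gamma_{\infty,1}$ variant of \eqref{0129:02} does not help: that refinement upgrades the \emph{target} space but leaves the time singularity unchanged. The paper instead invokes a Schauder (maximal regularity) estimate: the map $f\mapsto\int_0^t P_{t-s}f\,\mathrm{d}s$ gains exactly $\alpha$ derivatives in the H\"older scale, so $\int_0^t P_{t-s}\div g\,\mathrm{d}s$ is bounded in $\bC^{\beta+\alpha-1}$ by $\sup_s\|g(s)\|_{\bC^\beta}$ with no logarithmic loss. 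This is a qualitatively stronger input than the pointwise smoothing \eqref{0129:02}, and it is what delivers the closed endpoint; interpolation then covers all $\gamma\in(0,\alpha-1+\beta]$.
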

\begin{proof}
Based on \eqref{0105:02}, by \eqref{0105:03} we have
\begin{align*}
   \|\rho_t\|_{p}&\le  \|P_t\rho_0\|_{p}+\int_0^t \|\nabla P_{t-s}(b(s,\cdot,\rho_s)  \rho_s)\|_p\dif s\\
   &\lesssim t^{\frac{d}{\alpha p}-\frac{d}{\alpha q}}\|\rho_0\|_{q}+\int_0^t(t-s)^{-\frac{1}{\alpha}} \|b(s,\cdot,\rho_s)  \rho_s\|_p\dif s\\
   &\lesssim t^{\frac{d}{\alpha p}-\frac{d}{\alpha q}}\|\rho_0\|_{q}+\int_0^t(t-s)^{-\frac{1}{\alpha}} \|\rho_s\|_p\dif s,
\end{align*}
which by Gronwall's inequality of Volterra-type ({see, e.g., \cite[Example 2.4]{ZHANG2010JFA}}) implies \eqref{0105:01}.

For showing \eqref{0103:01}, given $\beta_0\in(0,\alpha-1-\frac{d}{q})$, we first prove
\begin{equation}\label{0105:05}
    \|\rho_t\|_{\bC^{\beta_0}}\lesssim t^{-\big(\frac{\beta_0
    }{\alpha}+\frac{d}{\alpha q}\big)}\|\rho_0\|_{q}.
\end{equation}
To this end, for any $s>0$, we introduce $\rho_{s,t}(x):=\rho_{s+t}(x)$. Then $t\to\rho_{s,t}$ solves \eqref{0105:00} with $b(t)=b(s+t)$ and $\rho_{s,0}=\rho_s$. Thus based on \eqref{0105:02}, it follows from \eqref{0105:04} that
\begin{align*}
    \|\rho_{s,t}\|_{\bC^{\beta_0}}&\lesssim \|P_t\rho_{s,0}\|_{\bC^{\beta_0}}+\int_0^t \|\nabla P_{t-r}(b(s+r,\cdot,\rho_{s,r})  \rho_{s,r})\|_{\bC^{\beta_0}}\dif r\\
    &\lesssim t^{-\frac{\beta_0}{\alpha}}\|\rho_{s,0}\|_{\infty}+\int_0^t(t-r)^{-\frac{1+\beta_0}{\alpha}} \|b(s+r,\cdot,\rho_{s,r})  \rho_{s,r}\|_{\infty}\dif r\\
    &\lesssim t^{-\frac{\beta_0}{\alpha}}\|\rho_{s,0}\|_{\infty}+\int_0^t(t-r)^{-\frac{1+\beta_0}{\alpha}} \|\rho_{s,r}\|_{\infty}\dif r.
\end{align*}
Noting that \eqref{0105:01} implies that
\begin{align*}
    \|\rho_{s,r}\|_{\infty}=\|\rho_{s+r}\|_{\infty}\lesssim (s+r)^{-\frac{d}{\alpha q}}{\|\rho_0\|_q},
\end{align*}
one sees that for $s=t/2$
\begin{align*}
    \|\rho_{t}\|_{\bC^{\beta_0}}&=\|\rho_{t/2,t/2}\|_{\bC^{\beta_0}}
    \lesssim \left(t^{-\frac{\beta_0}{\alpha}-\frac{d}{\alpha q}}+\int_0^{t/2}(t/2-r)^{-\frac{1+\beta_0}{\alpha}}(r+t/2)^{-\frac{d}{\alpha q}} \dif r\right)\|\rho_{0}\|_{q}\\
    &\lesssim\left(t^{-\frac{\beta_0}{\alpha}-\frac{d}{\alpha q}}+(t/2)^{-\frac{d}{\alpha q}}\int_0^{t/2}(t/2-r)^{-\frac{1+\beta_0}{\alpha}} \dif r\right)\|\rho_{0}\|_{q}\lesssim t^{-\frac{\beta_0}{\alpha}-\frac{d}{\alpha q}}\|\rho_{0}\|_{q},
\end{align*}
which is \eqref{0105:05}. Now we use induction to show that for any $k\in\mN$ and $\beta_k:=(k\beta_0)\wedge \beta$
\begin{equation}\label{0105:06}
    \|\rho_t\|_{\bC^{\beta_k}}\lesssim_{k,q} t^{-\big(\frac{\beta_k
    }{\alpha}+\frac{d}{\alpha q}\big)}(1+\|\rho_0\|_{q})^k.
\end{equation}
The method is similar as that showing \eqref{0105:05}: we assume that \eqref{0105:06} holds for $k$, consider $\rho_{s,t}$ again, and by \eqref{0129:02}, we  have 
\begin{align*}
    \|\rho_{s,t}\|_{\bC^{\beta_{k+1}}}&\lesssim
    \|P_t\rho_{s,0}\|_{\bC^{\beta_{k+1}}}+\int_0^t \|\nabla P_{t-{r}}(b(s+r,\cdot,\rho_{s,r})  \rho_{s,r})\|_{\bC^{\beta_{k+1}}}\dif r\\
   & \lesssim t^{-\frac{\beta_{k+1}}{\alpha}}\|\rho_{s,0}\|_{\infty}+\int_0^t(t-r)^{-\frac{1+\beta_0}{\alpha}} \|b(s+r,\cdot,\rho_{s,r})  \rho_{s,r}\|_{\bC^{\beta_k}}\dif r\\
   & \lesssim t^{-\frac{\beta_{k+1}}{\alpha}}\|\rho_{s,0}\|_{\infty}\\
&\quad+\int_0^t(t-{r})^{-\frac{1+\beta_0}{\alpha}}\left[\|b(s+r,\cdot,\rho_{s,r})  \|_{\bC^{\beta_k}}\|\rho_{s,r}\|_{\infty}+\|b(s+r,\cdot,\rho_{s,r})  \|_{\infty}\|\rho_{s,r}\|_{\bC^{\beta_k}}\right]\dif r,
\end{align*}
where we used the fact $\|fg\|_{\bC^\gamma}\lesssim \|f\|_{\bC^{\gamma}}\|g\|_{\infty}+ \|g\|_{\bC^{\gamma}}\|f\|_{\infty}$ for any $\gamma>0$. Since $\beta_k\le \beta$, based on the assumption \eqref{0105:06}, and by \eqref{0105:01}, we have for $s=t/2$,
\begin{align*}
    \|\rho_{t}\|_{\bC^{\beta_{k+1}}}
    &\lesssim t^{-\frac{\beta_{k+1}}{\alpha}-\frac{d}{\alpha q}}\|\rho_{0}\|_{q}+\int_0^{t/2}(t/2-r)^{-\frac{1+\beta_0}{\alpha}}\left[ \left(\|\rho_{t/2+r}  \|_{\bC^{\beta_k}} +1 \right)\|\rho_{t/2+r}\|_{\infty}+\|\rho_{t/2+r}\|_{\bC^{\beta_k}}\right]\dif r\\
   & \lesssim t^{-\frac{\beta_{k+1}}{\alpha}-\frac{d}{\alpha q}}\|\rho_{0}\|_{q}+\int_0^{t/2}(t/2-r)^{-\frac{1+\beta_0}{\alpha}}(t/2+r)^{-\frac{\beta_k+2d/q}{\alpha}}\dif r(1+\|\rho_{0}\|_{q})^{k+1}\\
    &\lesssim t^{-\frac{\beta_{k+1}}{\alpha}-\frac{d}{\alpha q}}\|\rho_{0}\|_{q}+(t/2)^{-\frac{\beta_k+2d/q}{\alpha}}\int_0^{t/2}(t/2-r)^{-\frac{1+\beta_0}{\alpha}}\dif r(1+\|\rho_{0}\|_{q})^{k+1}\\
     &\lesssim t^{-\frac{\beta_{k+1}}{\alpha}-\frac{d}{\alpha q}}\|\rho_{0}\|_{q}+(t/2)^{-\frac{\beta_k+2d/q}\alpha}(t/2)^{\frac{\alpha-1-\beta_0}{\alpha}}(1+\|\rho_{0}\|_{q})^{k+1}\\
    &\lesssim t^{-\frac{\beta_{k+1}}{\alpha}-\frac{d}{\alpha q}}(1+\|\rho_{0}\|_{q})^{k+1},
\end{align*}
provided that $ \alpha-1-\beta_0>d/q$,
which by induction gives \eqref{0105:06}. Hence, by taking $k$ large enough such that $(k\beta_0)\wedge \beta =\beta$, we have
\begin{equation}\label{0105:07}
    \|\rho_t\|_{\bC^{\beta}}\lesssim t^{-\big(\frac{\beta
    }{\alpha}+\frac{d}{\alpha q}\big)}.
\end{equation}
Based on \cite[Lemma 3.1]{CHZ20}, $v(t):=\int_0^t P_{t-s}f\dif s$ solves $(\p_t-\Delta^{\alpha/2} )v=f$, $v_0=0$. In view of, \cite[Lemma 3.6]{WU2023SPA} and \eqref{0105:02}, we have for any $s>0$,
\begin{align*}
    \|\rho_{s,t}\|_{\bC^{\beta+\alpha-1}}&\lesssim \|P_t\rho_{s,0}\|_{\bC^{\beta+\alpha-1}}+\sup_r\|b(s+r,\cdot,\rho_{s,r})  \rho_{s,r}\|_{\bC^{\beta}}\\
    &\lesssim t^{-\frac{\beta+\alpha-1}{\alpha}}\|\rho_{s,0}\|_{\infty}+\sup_r\|b(s+r,\cdot,\rho_{s,r})  \|_{\bC^{\beta}}\|\rho_{s,r}\|_\infty+\sup_r\|\rho_{s,r}\|_{\bC^\beta}\\
    &\lesssim t^{-\frac{\beta+\alpha-1}{\alpha}}\|\rho_{s,0}\|_{\infty}+\sup_r\|\rho_{s,r}  \|_{\bC^{\beta}}\|\rho_{s,r}\|_\infty+\sup_r\|\rho_{s,r}\|_{\bC^\beta}\\
    &\overset{\eqref{0105:01}}{\lesssim} t^{-\frac{\beta+\alpha-1}{\alpha}}s^{-\frac{d}{\alpha q}}\|\rho_{0}\|_{q}+\sup_r(s+r)^{-\frac{d}{\alpha q}}\|\rho_{s,r}  \|_{\bC^{\beta}}\\
    &\overset{\eqref{0105:07}}{\lesssim} t^{-\frac{\beta+\alpha-1}{\alpha}}s^{-\frac{d}{\alpha q}}+\sup_r(s+r)^{-\frac{\beta+2d/q}{\alpha}}\\
    &\lesssim t^{-\frac{\beta+\alpha-1}{\alpha}}s^{-\frac{d}{\alpha q}}+s^{-\frac{\beta+\alpha-1}{\alpha}-\frac{d}{\alpha q}},
\end{align*}
provided that $d/q<\alpha-1$. 
%This completes the proof by taking $s=t/2$ and the interpolation.
By choosing $s=t/2$ and invoking the interpolation property, we conclude the proof.
\end{proof}

The following result is derived from the uniqueness argument in part (ii) of the proof of Theorem 1.2 in \cite[page 441]{WU2023SPA}.
\bl
Let $\beta\in(0,1)$ be given in $($\hyperlink{(H)}{$\boldsymbol{\rm H}$}$)$. Then for any $\rho_0\in \bC^\beta$, there is a constant $C=C(d,\alpha,\kappa,\beta)>0$ such that
\begin{equation}\label{0220:00}
	\|\rho_t\|_{\bC^\beta}\le C \|\rho_0\|_{\bC^\beta}.
\end{equation}
\el

\section{Convergence of the empirical measure: Proof of Theorem \ref{THM:CONVERGENCE:01}}\label{SEC:EMPIRICAL}
{We recall
that $\rho_t(\cdot)$ solves the following nonlinear Fokker-Planck equation		
\begin{equation}\label{EQ:FPE:01}
	\partial_t \rho_t(x)=\Delta^{\frac{\alpha}{2}} \rho_t(x)-\div(b(t,x,\rho_t(x)) \cdot \rho_t(x)),\quad \lim_{t\downarrow 0}\rho_t(x)\d x =\mu_{X_0}(\d x) ~\text{weakly.}
\end{equation}
We also recall that $P_t$ is the transition semi-group of the  \levy~process $L^{\alpha}$ whose generator is $\Delta^{\frac{\alpha}{2}}$ 
%Moreover, the action of the semigroup \( P_t \) on a function \( f \) can be characterized by the convolution of \( f \) with the heat kernel \( q_{\alpha}(t, \cdot) \), which is defined by  \eqref{EQ:KERNEL:01},
%\begin{equation}\label{EQ:SEMIGROUP:01}
	%P_tf = q_{\alpha}(t,\cdot)*f.
%\end{equation}
and $\rho_t^N(x) :=(\phi_N * \mu_t^N)(x)$. 
%Recall that $\rho_t(\cdot)$ solves the  Fokker-Planck equation  \eqref{EQ:FPE:01}.
%For any  $\theta \in(0,1],$ and a smooth compactly supported and symmetric probability density function $\phi: \mathbb{R}^{d} \rightarrow[0, \infty) $, we define the mollifier as follow
%\begin{equation}\label{EQ:SCALING:03}
	%\phi_{N}(x):=N^{\theta d} \phi( N^{\theta} x).
%\end{equation}

 The main aim of this section is to show the main result Theorem \ref{THM:CONVERGENCE:01}. To do this, we rigorously analyze the difference between $\rho^{N}_{t}(x)$  and the true density $\rho_{t}(x)$.

Before presenting the proof, we provide a heuristic explanation for the convergence rate stated in Theorem \ref{THM:CONVERGENCE:01}, based on the decomposition \eqref{0329:00}. The right-hand side of \eqref{0329:00} consists of four distinct components:
\begin{enumerate}[label=(\alph*)]
 \item Initial data term $P_{t}\rho_{0}-P_{t}\rho_{0}^{N}$;
\item Reiteration term (settled for using Gronwall's inequality of Volterra-type);
\item Smallness term;
\item Martingale term (arising from \ito's formula).
\end{enumerate}
The reiteration term and smallness term originate from
\begin{align*}
    &\int_{0}^{t} \nabla \cdot P_{t-s} \left[ b(\rho_s(x)) \rho_s(x) - \left\langle b(\rho_s^N(\cdot)) \phi_N(x-\cdot), \mu_s^N(\cdot) \right\rangle \right] \dif s,
\end{align*}
where
\begin{align*}
   &\quad b(\rho_s(x)) \rho_s(x) - \left\langle b(\rho_s^N(\cdot)) \phi_N(x-\cdot), \mu_s^N(\cdot) \right\rangle \\
   &=b(\rho_s(x)) \rho_s(x) - \left\langle b(\rho_s(\cdot)) \phi_N(x-\cdot), \mu_s^N(\cdot) \right\rangle +  \left\langle \left[b(\rho_s(\cdot))-b(\rho_s^N(\cdot))\right] \phi_N(x-\cdot), \mu_s^N(\cdot) \right\rangle\\
   &= \left\{b(\rho_s(x))\rho_s^N(x)-\left\langle b(\rho_s(\cdot)) \phi_N(x-\cdot), \mu_s^N(\cdot) \right\rangle\right\}+b(\rho_s(x))[\rho_s(x)-\rho_s^N(x)] \\
   &\quad +  \left\langle \left[b(\rho_s(\cdot))-b(\rho_s^N(\cdot))\right] \phi_N(x-\cdot), \mu_s^N(\cdot) \right\rangle\\
   &=\text{smallness term}+\text{reiteration term 1}+\text{reiteration term 2}.
\end{align*}
The reiteration term 1 and 2 can be controlled by $\|\rho_s-\rho_s^N\|_{L^\infty}$, which can be absorbed using Gronwall's inequality. The smallness term, based on the density estimate in Lemma \ref{lem:25}, provides a convergence rate of $N^{-\beta\theta}$.

The initial data term can be further decomposed as
\begin{equation*}
    P_{t}\rho_{0}-P_{t}\rho_{0}^{N}=P_t(\rho_0 -\rho_0 * \phi_N)+P_t(\rho_0 * \phi_N - \mu_0^N * \phi_N).
\end{equation*}
The first term, $P_t(\rho_0 -\rho_0 * \phi_N)$, uses the regularity of the heat semigroup to yield a convergence rate of $N^{-\beta\theta}$ with a singularity in time of order $t^{-\frac{\beta+d/q}{\alpha}}$.

For the second component of the initial data term, $P_t(\rho_0 * \phi_N - \mu_0^N * \phi_N)$, as well as the martingale term (d), we can obtain a decay rate of $N^{-1/2 + \theta d}$. To ensure convergence of this term, we impose the condition $\theta < d/2$, which arise in our moderate interaction setting.

 We now proceed with the detailed proof.

} 

First of all, we recall the notation:
\begin{equation}\label{EQ:ERROR:01}
	\cU^N_t(x):=\rho_t(x)-\rho^N_t(x).
\end{equation}
By the definition of $\rho^N_t(\cdot)$,
\begin{equation}\label{EQ:MOLLIFIED:01}
\begin{split}
	\rho_t^N(x) =\frac{1}{N} \sum_{i=1}^{N} \phi_N(x-X_t^{N,i}) =\langle \phi_N(x-\cdot),\mu_t^N(\cdot) \rangle.
\end{split}
\end{equation}
Applying \ito's formula to $\phi_N(x-X_t^{N,i})$ for $1\leq i\leq N,$ we have
\begin{align*}
	\dif \phi_N(x-X_t^{N,i})&=-b(t,X_t^{N,i}, \phi_N \ast \mu_t^N(X_t^{N,i})) \cdot \nabla \phi_N (x-X_t^{N,i}) \dif t \\
	&\qquad+\Delta^{\frac{\alpha}{2}} \phi_N(x-X_t^{N,i}) \dif t + \d M_t^{N,i}(x).
\end{align*}
where
\begin{equation}\label{EQ:Martingale:00}
	M_t^{N,i}(x):=\begin{cases}
		\displaystyle \sqrt{2} \int_{0}^{t}\nabla\phi_N(x-X_s^{N,i})\d W_s^{i},& \alpha= 2,\\
		\displaystyle \int_{0}^{t} \int_{\mathbb{R}^d_{*}} [\phi_N(x-X_{s-}^{N,i}-z)-\phi_N(x-X_{s-}^{N,i})] \tilde{\cN}^i(\dif s, \dif z),& \alpha\in (1,2).
	\end{cases}
\end{equation} 
Therefore, $\rho_t^N$ solves the following SPDE
\begin{align*}
	\dif \rho_t^N(x) &=-\frac{1}{N} \sum_{i=1}^{N} b(t,X_t^{N,i}, \rho^N_t(X_t^{N,i})) \cdot \nabla \phi_N(x-X_t^{N,i}) \dif t+\Delta^{\frac{\alpha}{2}}  \rho_t^N(x) \dif t \\
	&\qquad +\frac{1}{N} \sum_{i=1}^{N} \d M_t^{N,i}(x).
\end{align*}
For simplicity, set $M_t^N(x) :=\frac{1}{N}\sum_{i=1}^{N}M_t^{N,i}(x) $. %i.e.,
%\begin{equation}\label{EQ:Martingale:01}
	%M_t^N(x):=\begin{cases}\displaystyle \sqrt{2} \frac{1}{N}\sum_{i=1}^{N}\int_{0}^{t}\nabla\phi_N(x-X_s^{N,i})\d W_s^{i},& \alpha= 2,\\	\displaystyle\frac{1}{N} \sum_{i=1}^{N} \int_{0}^{t} \int_{\mathbb{R}^d_{*}} [\phi_N(x-X_{s-}^{N,i}-z)-\phi_N(x-X_{s-}^{N,i})] \tilde{\cN}^i(\dif s, \dif z),& \alpha\in (1,2).
	%\end{cases}
%\end{equation} 
Then we have
\begin{equation}\label{EQ:FPE:02}
	\dif \rho_t^N(x)=\Delta^{\frac{\alpha}{2}} \rho^N_t(x) \dif t-\div_x \langle b(t,\cdot,\rho_t^N(\cdot)) \phi_N(x-\cdot) ,\mu_t^N(\cdot)  \rangle \dif t+\dif M_t^N(x).
\end{equation}
\iffalse
It follows from \eqref{EQ:Martingale:00} that 
%\begin{equation*}
%\begin{split}
%	    \int_{0}^{t} P_{t-s} \dif M_s^N(x) = \left\{\sqrt{2} \frac{1}{N}\sum_{i=1}^{N}\int_{0}^{t} P_{t-s}\left(\nabla \phi_{N}\right)\left(x-X_{s}^{N,i}\right) \mathrm{d} W_{s}^{i},  \alpha=2, \\
%	\frac{1}{N}\sum_{i=1}^{N}\int_{0}^{t} \int_{\mathbb{R}_{*}^{d}} P_{t-s} [\phi_N(x-X_{s-}^{N,i}-z)-\phi_N(x-X_{s-}^{N,i})] \tilde{\cN}^{i}(\mathrm{d} s, \mathrm{d} z),  \alpha \in(1,2) .
%\end{split}
%\end{equation*}
\begin{equation*}
	\int_{0}^{t} P_{t-s} \, \dif M_s^N(x) = 
	\begin{cases}
		\displaystyle \sqrt{2} \frac{1}{N}\sum_{i=1}^{N} \int_{0}^{t} P_{t-s} \left( \nabla \phi_{N} \right) \left( x - X_{s}^{N,i} \right) \, \mathrm{d} W_{s}^{i}, & \alpha = 2, \\
		\displaystyle \frac{1}{N} \sum_{i=1}^{N} \int_{0}^{t} \int_{\mathbb{R}_{*}^{d}} P_{t-s} [ \phi_N ( x - X_{s-}^{N,i} - z ) - \phi_N ( x - X_{s-}^{N,i} ) ] \, \tilde{\cN}^{i} (\mathrm{d}s, \mathrm{d}z), & \alpha \in (1,2)
	\end{cases}
\end{equation*}
\fi
Based on the equations \eqref{EQ:FPE:01} and \eqref{EQ:FPE:02}, by the definition \eqref{EQ:ERROR:01}, one sees that
\begin{equation}\label{EQ:ERROR:02}
	\begin{split}
		\mathrm{d} \mathcal{U}_t^N(x) &=\Delta^{\frac{\alpha}{2}}\mathcal{U}_t^N(x) \mathrm{d} t -\div [b(t,\cdot,\rho_t(\cdot))\rho_t(\cdot)](x)\mathrm{d}t \\
		&\quad +\div_x\langle b(t,\cdot,\rho_t^{N}(\cdot))\phi_N(x-\cdot),\mu_t^N(\cdot) \rangle \mathrm{d} t-\mathrm{d} M_t^N(x).
	\end{split}
\end{equation}
By applying Duhamel's formula to \eqref{EQ:ERROR:02}, %while noting that $\nabla = (\partial_1, \dots, \partial_n)^{\top}$ and $b = (b^1, \dots, b^n)^{\top}$,
we have
\begin{equation}\label{EQ:ERROR:03}
	\begin{split}
		\cU^N_t(x)&=-\int_{0}^{t} \nabla \cdot P_{t-s} [b(s,x,\rho_s(x)) \rho_s(x)-\langle  b(s,\cdot,\rho_s^N(\cdot)) \phi_N(x-\cdot), \mu_s^N(\cdot)  \rangle ] \dif s\\
		&\quad -\int_{0}^{t} P_{t-s} \dif M_s^N(x)+P_t \cU_0^N(x) \\
		&=:-\int_{0}^{t} \nabla \cdot P_{t-s} H_s^N(x) \dif s- \int_{0}^{t} P_{t-s} \dif M_s^N(x)+P_t \cU_0^N(x),
	\end{split}
\end{equation}
where
\begin{align*}
	H^N_s(x)& =b(s,x,\rho_s(x)) \rho_s(x)-\langle  b(s,\cdot,\rho_s^N(\cdot)) \phi_N(x-\cdot), \mu_s^N(\cdot)  \rangle\\
	&= b(s,x,\rho_s(x))\rho_s(x)-b(s,x,\rho_s(x))\rho_s^N(x) +b(s,x,\rho_s(x))\rho_s^N(x)\\
	&\quad -\langle  b(s,\cdot,\rho_s^N(\cdot)) \phi_N(x-\cdot), \mu_s^N(\cdot)  \rangle\\
	&= b(s,x,\rho_s(x))\cU_s^N(x)+ \langle [b(s,x,\rho_s(x))- b(s,\cdot,\rho_s(\cdot))]\phi_N(x-\cdot),\mu_s^N(\cdot)\rangle \\
	&\quad +\langle [b(s,\cdot,\rho_s(\cdot))-b(s,\cdot
	,\rho_s^N(\cdot))]\phi_N(x-\cdot),\mu_s^N(\cdot)\rangle=:\sum_{i=1}^{3}H_s^{i,N}(x).
\end{align*}
It follows from the boundedness of $b$  that
\begin{equation*}
	|H_s^{1,N}(x)| =|b(s,x,\rho_s(x))\cU_s^N(x)|\lesssim |\cU_s^{N}(x)|.
\end{equation*}
As for $H_s^{2,N}(x),$ one sees that  
\begin{equation*}
	\begin{split}
		&\quad|b(s,x,\rho_s(x))-b(s,{y},\rho_s(y))| \\
        &= |b(s,x,\rho_s(x))-b(s,{x},\rho_s(y))+b(s,x,\rho_s(y))-b(s,{y},\rho_s(y))|\\
		& \lesssim |\rho_s(x)-\rho_s(y)| +|x-y|^{\beta}\\
		&\lesssim |x-y|^{\beta}[\|\rho_s\|_{{\bC}^{\beta}}+1].
	\end{split}
\end{equation*}
Noting that for any $x \in \mathbb{R}^{d}$,  $\operatorname{supp} \phi_{N} \subset\{x:|N^{\theta} x| \leqslant C\}= \{x:|x| \leqslant C N^{-\theta}\},$ which implies that
\begin{equation*}
	|x|^{\beta} \phi_{N}(x) \lesssim N^{-\theta\beta}\phi_{N}(x),
\end{equation*}
we have
\begin{equation*}
	\begin{split}
		H_s^{2,N}(x)
        &\lesssim\int_{\R^d} [\|\rho_s\|_{{\bC}^{\beta}}+1] \big(|x-y|^{\beta}\phi_N(x-y) \big)\mu_s^N(\d y)\\
		&\lesssim N^{-\theta\beta}(\|\rho_s\|_{{\bC}^{\beta}}+1)\int_{\R^d}\phi_N(x-y)\mu_s^N(\d y)\\
        &=  N^{-\theta\beta}(\|\rho_s\|_{{\bC}^{\beta}}+1) \rho_s^N(x).
	\end{split}
\end{equation*}
In the view of the boundedness of function $b$, we have
\begin{align*}
    H_s^{2,N}(x)\le \|b\|_\infty \<\phi_N(x-\cdot),\mu^N_s(\cdot)\>\lesssim \rho_s^N(x),
\end{align*}
which implies that
\begin{align*}
    H_s^{2,N}(x)&\lesssim [ 1\wedge (N^{-\theta\beta}(\|\rho_s\|_{{\bC}^{\beta}}+1))] \rho_s^N(x)\leq {[ 1\wedge (N^{-\theta\beta}(\|\rho_s\|_{{\bC}^{\beta}}+1))]} ({|\cU^N_s(x)|}+\rho_s(x))\\
    &\lesssim {|\cU^N_s(x)|}+N^{-\theta\beta}(\|\rho_s\|_{{\bC}^{\beta}}+1)\rho_s(x).
\end{align*}
For $H^{3,N}_s$, based on the Lipschitz continuity of $b(t,x,\cdot)$ and the boundedness of $b$, we also have
\begin{align*}
	|H_s^{3,N}(x)| &\lesssim [1\wedge\|\rho_s-\rho_s^N\|_{{\infty}}]|\langle \phi_N(x-\cdot),\mu_s^N(\cdot)\rangle| =[1\wedge\|\cU_s^N\|_{{\infty}}] |\rho_s^N(x)|\\
    &\lesssim[1\wedge\|\cU_s^N\|_{{\infty}}](|\cU_s^N(x)|+\rho_s(x))\lesssim |\cU_s^N(x)|+\|\cU_s^N\|_{{\infty}}|\rho_s(x)|.
\end{align*}
To sum up,
\begin{equation}\label{EQ:H:01}
	\begin{split}
		|H^N_s(x)| 
		& \lesssim |\cU_s^N(x)| +N^{-\theta\beta}(\|\rho_s\|_{{\bC}^{\beta}}+1) \rho_s(x) + |\rho_s(x)|\|\cU_s^N\|_{{\infty}} . 
	\end{split}
\end{equation}
Based on \eqref{EQ:H:01}, by the heat kernel estimate \eqref{0105:03}, we have
\begin{align*}
    \|\nabla P_{t-s}H^N_s\|_\infty
   & \lesssim (t-s)^{-\frac{1}{\alpha}}
    \|\cU^N_s\|_\infty+N^{-\theta\beta}(\|\rho_s\|_{{\bC}^{\beta}}+1)(t-s)^{-\frac{1+d/q}{\alpha}}\|\rho_s\|_{q}\\
    &\quad +\|\cU^N_s\|_\infty(t-s)^{-\frac{1+d/q}{\alpha}}\|\rho_s\|_{q},
\end{align*}
which by \eqref{EQ:ERROR:03} implies that
\begin{equation}\label{0120:00}
    \begin{split}
        \| \cU_t^N \|_{\infty}&\lesssim \| P_t \cU_0^N \|_{\infty}+ \int_{0}^{t}  \|\nabla P_{t-s} H^N_s \|_{\infty} \dif s+\left\|\int_{0}^{t} P_{t-s}\d M_s^N(\cdot)\right\|_\infty\\
   & \lesssim \| P_t \cU_0^N \|_{\infty}+ \int_{0}^{t} (t-s)^{-\frac{1}{\alpha}} [1+(t-s)^{-\frac{d}{q\alpha}}\|\rho_s\|_q] \| \cU^N_s \|_{\infty} \dif s\\
    &\quad+N^{-\theta \beta}\int_{0}^{t} (t-s)^{-\frac{1+d/q}{\alpha}}(\|\rho_s\|_{{\bC}^{\beta}}+1) \| \rho_s \|_{q} \dif s+\left\|\int_{0}^{t} P_{t-s}\d M_s^N(\cdot)\right\|_\infty.
    \end{split}
\end{equation}
Then it follows from the density estimates \eqref{0105:01} and \eqref{0103:01} that 
\begin{align}\label{EQ:U_t^N:04}
\begin{split}
       \| \cU_t^N \|_{\infty}& \lesssim \| P_t \cU_0^N \|_{\infty}+ \int_{0}^{t} (t-s)^{-\frac{1+d/q}{\alpha}} \| \cU^N_s \|_{\infty} \dif s\\
    &\quad+N^{-\theta \beta}\int_{0}^{t} (t-s)^{-\frac{1+d/q}{\alpha}} s^{-\frac{\beta+d/q}{\alpha}} \dif s+\left\|\int_{0}^{t} P_{t-s}\d M_s^N(\cdot)\right\|_\infty.
    \end{split}
\end{align}
%Noting that $\beta+\frac{d}q<1+(\alpha-1)=\alpha$,
By Gronwall's inequality of Volterra-type (see, e.g., \cite[Example 2.4]{ZHANG2010JFA}), we have 
\begin{comment}
\begin{align}\label{EQ:0110:01} 
       \| \cU_t^N \|_{\infty}&\lesssim \int_{0}^{t} (t-s)^{-\frac{1+d/q}{\alpha}} \| P_s \cU_0^N \|_{\infty}\dif s+N^{-\theta \beta}\int_{0}^{t} (t-s)^{-\frac{1+d/q}{\alpha}}  s^{\frac{\alpha-1-\beta-2d/q}{\alpha}}\dif s  \no\\
    &\quad +\int_{0}^{t} (t-s)^{-\frac{1+d/q}{\alpha}} \left\|\int_{0}^{s} P_{s-r}\d M_r^N({\cdot})\right\|_\infty\dif s +\| P_t \cU_0^N \|_{\infty} \no\\
    &\quad +N^{-\theta \beta} \int_{0}^{t} (t-s)^{-\frac{1+d/q}{\alpha}} s^{-\frac{\beta+d/q}{\alpha}} \dif s + \left\|  \int_{0}^{t} P_{t-s} \dif M_s^N(\cdot)   \right\|_{\infty} \no\\
    &\lesssim \int_{0}^{t} (t-s)^{-\frac{1+d/q}{\alpha}}  \| P_s \cU_0^N \|_{\infty}\dif s+N^{-\theta \beta}t^{\frac{2(\alpha-1-d/q)-\beta}{\alpha}}\no\\
    &\quad+\int_{0}^{t} (t-s)^{-\frac{1+d/q}{\alpha}}\left\|\int_{0}^{s} P_{s-r}\d M_r^N({\cdot})\right\|_\infty\dif s+ \| P_t \cU_0^N \|_{\infty} \no \\
    &\quad+ N^{-\theta \beta}t^{\frac{\alpha-1-\beta-{2d}/{q}}{\alpha}}+\left\|  \int_{0}^{t} P_{t-s} \dif M_s^N(\cdot)   \right\|_{\infty}\no\\  
    &\lesssim \int_{0}^{t} (t-s)^{-\frac{1+d/q}{\alpha}}  \| P_s \cU_0^N \|_{\infty}\dif s+N^{-\theta \beta}t^{-\frac{d}{q\alpha}}+\left\|  \int_{0}^{t} P_{t-s} \dif M_s^N(\cdot)   \right\|_{\infty}\no\\
    &\quad+\int_{0}^{t} (t-s)^{-\frac{1+d/q}{\alpha}}\left\|\int_{0}^{s} P_{s-r}\d M_r^N({\cdot})\right\|_\infty\dif s+ \| P_t \cU_0^N \|_{\infty}.
\end{align}
\end{comment}
\begin{align*} 
       \| \cU_t^N \|_{\infty}&\lesssim \int_{0}^{t} (t-s)^{-\frac{1+d/q}{\alpha}} \| P_s \cU_0^N \|_{\infty}\dif s+N^{-\theta \beta}\int_{0}^{t} (t-s)^{-\frac{1+d/q}{\alpha}}  s^{\frac{\alpha-1-\beta-2d/q}{\alpha}}\dif s  \no\\
    &\quad +\int_{0}^{t} (t-s)^{-\frac{1+d/q}{\alpha}} \left\|\int_{0}^{s} P_{s-r}\d M_r^N({\cdot})\right\|_\infty\dif s +\| P_t \cU_0^N \|_{\infty} \no\\
    &\quad +N^{-\theta \beta} \int_{0}^{t} (t-s)^{-\frac{1+d/q}{\alpha}} s^{-\frac{\beta+d/q}{\alpha}} \dif s + \left\|  \int_{0}^{t} P_{t-s} \dif M_s^N(\cdot)   \right\|_{\infty}.
\end{align*}
Noting that $\beta+\frac{d}q <\alpha-1<1+(\alpha-1)=\alpha$,  we have 
%\begin{equation*}
%    \int_{0}^{t} (t-s)^{-\frac{1+d/q}{\alpha}}  s^{\frac{\alpha-1-\beta-2d/q}{\alpha}}\dif s\leq t^{\frac{2(\alpha-1-d/q)-\beta}{\alpha}}\textcolor{red}{\leq t^{\frac{-1-d/q+\alpha-1-\beta-2d/q+\alpha}{\alpha}}=t^{\frac{2\alpha-2-3d/q-\beta}{\alpha}}}, 
%\end{equation*}
\begin{equation*}
    \int_{0}^{t} (t-s)^{-\frac{1+d/q}{\alpha}}  s^{\frac{\alpha-1-\beta-2d/q}{\alpha}}\dif s \leq t^{\frac{2\alpha-2-3d/q-\beta}{\alpha}}, 
\end{equation*}
and 
\begin{equation*}
    \int_{0}^{t} (t-s)^{-\frac{1+d/q}{\alpha}} s^{-\frac{\beta+d/q}{\alpha}} \dif s\leq t^{\frac{\alpha-1-\beta-{2d}/{q}}{\alpha}}.
\end{equation*}
It follows from $q>\frac{d}{\alpha-1}$ that 
\begin{equation}\label{EQ:0110:01}
    \begin{split}
        \| \cU_t^N \|_{\infty} &\lesssim \int_{0}^{t} (t-s)^{-\frac{1+d/q}{\alpha}}  \| P_s \cU_0^N \|_{\infty}\dif s+N^{-\theta \beta}t^{-\frac{d}{q\alpha}}+\left\|  \int_{0}^{t} P_{t-s} \dif M_s^N(\cdot)   \right\|_{\infty}\\
    &\quad+\int_{0}^{t} (t-s)^{-\frac{1+d/q}{\alpha}}\left\|\int_{0}^{s} P_{s-r}\d M_r^N({\cdot})\right\|_\infty\dif s+ \| P_t \cU_0^N \|_{\infty}.
    \end{split}
\end{equation}
Similarly, when $\rho_0\in\bC^\beta$, due to \eqref{0220:00}, we have $\|\rho_t\|_{\bC^\beta}\lesssim \|\rho_0\|_{\bC^\beta}\lesssim 1$, and by \eqref{0120:00} with $q=\infty$, we have
\begin{align*}
    \|\cU^N_t\|_\infty&\lesssim \|P_t\cU_0^N\|_\infty+\int_0^t (t-s)^{-\frac{1}{\alpha}}\|\cU^N_s\|_\infty\dif s+N^{-\theta\beta}\int_0^t (t-s)^{-\frac{1}{\alpha}}\dif s +\left\|\int_0^t P_{t-s}\dif M^N_s(\cdot)\right\|_{\infty}\\
    &\lesssim \|P_t\cU_0^N\|_\infty+\int_0^t (t-s)^{-\frac{1}{\alpha}}\|\cU^N_s\|_\infty\dif s+N^{-\theta\beta}+\left\|\int_0^t P_{t-s}\dif M^N_s(\cdot)\right\|_{\infty},
\end{align*}
which by Gronwall's inequality of Volterra-type (see, e.g., \cite[Example 2.4]{ZHANG2010JFA}) implies that
\begin{equation}\label{newEQ:0110:01}
\begin{aligned}
       \| \cU_t^N \|_{\infty}
    &\lesssim \int_{0}^{t} (t-s)^{-\frac{1}{\alpha}}  \| P_s \cU_0^N \|_{\infty}\dif s+N^{-\theta \beta}+\left\|  \int_{0}^{t} P_{t-s} \dif M_s^N(\cdot)   \right\|_{\infty}\\
    &\quad+\int_{0}^{t} (t-s)^{-\frac{1}{\alpha}}\left\|\int_{0}^{s} P_{s-r}\d M_r^N({\cdot})\right\|_\infty\dif s+ \| P_t \cU_0^N \|_{\infty}.
\end{aligned}
\end{equation}
\iffalse
For any $f\in L^{\infty}(\R^d)$ and $\gamma,t\geq 0,$ we introduce the new norm
\begin{equation}\label{EQ:NORM:01}
    \|f\|_{\mathbb{L}_{\gamma}^{\infty}(t)}=t^{\frac{\gamma}{\alpha}}\|f\|_{L^\infty}.
\end{equation}
Further, inserting \eqref{EQ:NORM:01} into \eqref{EQ:U_t^N:04}, we can obtain the following estimate
\begin{equation*}
    \begin{split}
      \| \cU_t^N \|_{\mathbb{L}_{\gamma}^{\infty}(t)} &\lesssim  \| P_t \cU_0^N \|_{\mathbb{L}_{\gamma}^{\infty}(t)} +t^{\frac{\gamma}{\alpha}}\int_{0}^{t} (t-s)^{-\frac{1}{\alpha}} \| \cU_s^N \|^2_{\mathbb{L}_{\gamma}^{\infty}(t)}s^{-\frac{2\gamma}{\alpha}} \dif s\\
      &\quad +t^{\frac{\gamma}{\alpha}}N^{-\theta\beta}+\Big\|\int_{0}^{t}P_{t-s}\d M_s^N(\cdot)\Big\|_{\mathbb{L}_{\gamma}^{\infty}(t)}.  
    \end{split}
\end{equation*}
\fi

%To prove Theorem \ref{THM:CONVERGENCE:01}, we need to analyze cautiously each term on the right-hand side of \eqref{EQ:U_t^N:04}. Now we begin to calculate the term $ \|P_t\cU_0^N(\cdot)\|_{L^m(\Omega;\mathbb{L}_{\gamma}^{\infty}(t))} $.  Actually we can establsih the next estimate.
\begin{lemma}
For any $\gamma\in(0,1),~m\geq 1$ and $\eps>0$, there exists a constant $C=C(\gamma,m,\eps)>0$ such that for any $t\in(0,T],$
    \begin{equation}\label{EQ:U_0^N:09}
        \left\|P_{t} \cU_{0}^{N}\right\|_{L^m(\Omega;L^{\infty})} \lesssim_C  t^{-\frac{\gamma}{\alpha}-\frac{d}{\alpha q}}N^{-\gamma\theta}\|\rho_0\|_q+N^{-\frac{1}{2}+\theta d+\eps}.
    \end{equation}
    Moreover, when $\rho_0\in\bC^{\gamma}$, it holds that
   \begin{equation}\label{newEQ:U_0^N:09}
        \sup_{t\in[0,T]}\left\|P_{t} \cU_{0}^{N}\right\|_{L^m(\Omega;L^{\infty})} \lesssim_C N^{-\gamma\theta}\|\rho_0\|_{\bC^\gamma}+N^{-\frac{1}{2}+\theta d+\eps}.
    \end{equation}
\end{lemma}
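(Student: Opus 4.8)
Recalling $\rho_0^N=\phi_N*\mu_0^N$ with $\mu_0^N=\tfrac1N\sum_{i=1}^N\delta_{X_0^{N,i}}$, the plan is to split
\begin{equation*}
	P_t\cU_0^N=\underbrace{P_t\big(\rho_0-\phi_N*\rho_0\big)}_{=:\,I_t}+\underbrace{P_t\big(\phi_N*(\rho_0-\mu_0^N)\big)}_{=:\,II_t},
\end{equation*}
where $\rho_0-\mu_0^N$ denotes the signed measure $\rho_0(y)\,\dif y-\mu_0^N(\dif y)$. Here $I_t$ is deterministic, while, since the $X_0^{N,i}$ are i.i.d.\ with law $\mu_{X_0}=\rho_0\,\dif x$, for each fixed $x$
\begin{equation*}
	II_t(x)=-\frac1N\sum_{i=1}^N\Big((q_\alpha(t)*\phi_N)(x-X_0^{N,i})-\E\big[(q_\alpha(t)*\phi_N)(x-X_0^{N,1})\big]\Big)
\end{equation*}
is a normalized sum of centred i.i.d.\ random variables. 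Since $\|\cdot\|_{L^{m'}(\Omega)}\le\|\cdot\|_{L^m(\Omega)}$ for $m'\le m$ on a probability space, I may and will assume $m\ge2$ throughout.

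\textbf{The deterministic term.} For \eqref{EQ:U_0^N:09} I would use $\bB^0_{\infty,1}\hookrightarrow L^\infty$, the smoothing estimate \eqref{0129:02} with $\beta_1=-\gamma$, $p_1=q$, $\beta_2=0$, $p_2=\infty$ (gain $\gamma+d/q$), the approximation estimate \eqref{0129:01}, and $L^q\hookrightarrow\bB^0_{q,\infty}$ from \eqref{AB2}:
\begin{equation*}
	\|I_t\|_\infty\le\|I_t\|_{\bB^0_{\infty,1}}\lesssim t^{-\frac{\gamma+d/q}{\alpha}}\big\|\rho_0-\phi_N*\rho_0\big\|_{\bB^{-\gamma}_q}\lesssim t^{-\frac{\gamma}{\alpha}-\frac{d}{\alpha q}}N^{-\gamma\theta}\|\rho_0\|_q .
\end{equation*}
For \eqref{newEQ:U_0^N:09}, since $P_t$ is convolution with the probability density $q_\alpha(t,\cdot)$ it is an $L^\infty$-contraction, so I would bound $\|I_t\|_\infty\le\|\rho_0-\phi_N*\rho_0\|_\infty\le\int_{\R^d}\|\delta_h^{(1)}\rho_0\|_\infty\,\phi_N(h)\,\dif h$ and use $\|\delta_h^{(1)}\rho_0\|_\infty\lesssim\|\rho_0\|_{\bC^\gamma}(|h|^\gamma\wedge1)$ (difference characterization of $\bC^\gamma=\bB^\gamma_\infty$) together with $\||\cdot|^\gamma\phi_N\|_1=N^{-\gamma\theta}\||\cdot|^\gamma\phi\|_1<\infty$, since $\phi$ is compactly supported; this gives $\sup_{t\le T}\|I_t\|_\infty\lesssim N^{-\gamma\theta}\|\rho_0\|_{\bC^\gamma}$.

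\textbf{The stochastic term.} This is the heart of the matter. To pass from a pointwise-in-$x$ estimate to the $L^\infty_x$-norm I would use the endpoint embedding $\bB^{d/2}_{2,1}\hookrightarrow\bB^0_{\infty,1}\hookrightarrow L^\infty$ (combine \eqref{Sob1} with $p_1=2,\alpha_1=d/2,p=\infty,\alpha=0$, and \eqref{AB2}), so that by Minkowski's inequality in $L^m(\Omega)$,
\begin{equation*}
	\big\|II_t\big\|_{L^m(\Omega;L^\infty)}\le\sum_{j\ge-1}2^{dj/2}\big\|\cR_j II_t\big\|_{L^m(\Omega;L^2)} .
\end{equation*}
Fixing $j$, since $\cR_j$ commutes with translations and convolutions, $\cR_j II_t=-\tfrac1N\sum_{i=1}^N\Xi_i^{(j)}$ with $\Xi_i^{(j)}=(\cR_j\phi_N*q_\alpha(t))(\cdot-X_0^{N,i})-\E[\cdots]$ independent, centred, $L^2(\R^d)$-valued, and $\|\Xi_i^{(j)}\|_{L^2}\le2\|\cR_j\phi_N\|_2$ a.s.\ (using $\|q_\alpha(t)\|_1=1$ and translation invariance of $\|\cdot\|_{L^2_x}$); hence the Rosenthal/Burkholder inequality for Hilbert-space-valued sums gives, as $m\ge2$,
\begin{equation*}
	\big\|\cR_j II_t\big\|_{L^m(\Omega;L^2)}\lesssim_m N^{-1/2}\big(\E\|\Xi_1^{(j)}\|_{L^2}^2\big)^{1/2}+N^{1/m-1}\big(\E\|\Xi_1^{(j)}\|_{L^2}^m\big)^{1/m}\lesssim_m N^{-1/2}\|\cR_j\phi_N\|_2 .
\end{equation*}
Summing over $j$ and invoking the scaling bound \eqref{EQ:DILATION:03} with $\beta=d/2,p=2,q=1$,
\begin{equation*}
	\big\|II_t\big\|_{L^m(\Omega;L^\infty)}\lesssim_m N^{-1/2}\sum_{j\ge-1}2^{dj/2}\|\cR_j\phi_N\|_2=N^{-1/2}\|\phi_N\|_{\bB^{d/2}_{2,1}}\lesssim N^{-1/2+\theta d}\le N^{-1/2+\theta d+\eps},
\end{equation*}
uniformly in $t\in[0,T]$; combining with the two estimates for $I_t$ yields \eqref{EQ:U_0^N:09} and \eqref{newEQ:U_0^N:09}.

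\textbf{Main obstacle.} The delicate point is the $L^\infty_x$-control of the random field $II_t$: a bound valid for each fixed $x$ cannot be promoted to the supremum over the continuum $\R^d$ by a union bound, so one is forced to trade spatial smoothness for the sup-norm through the Besov embedding, and the price of this trade is precisely the factor $\|\phi_N\|_{\bB^{d/2}_{2,1}}\sim N^{\theta d}$ from \eqref{EQ:DILATION:03} (whose exponent $\theta(\beta+d-d/p)$ grows with the regularity index $\beta$) --- this is what produces the rate $N^{-1/2+\theta d}$ and, downstream, the standing restriction $\theta<1/(2d)$. The small loss $\eps$ in the statement reappears only if one prefers a non-endpoint embedding $\bB^{s}_{p,\infty}\hookrightarrow L^\infty$ with $s>d/p$ in place of $\bB^{d/2}_{2,1}$; the argument above avoids it.
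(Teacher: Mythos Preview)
Your proposal is correct and follows the same overall strategy as the paper: the identical decomposition $P_t\cU_0^N=I_t+II_t$, the same semigroup/Besov estimates for the deterministic piece $I_t$, and the same idea of embedding $L^\infty_x$ into an $L^2$-based Besov space and applying a Hilbert-valued martingale/Rosenthal inequality for the stochastic piece $II_t$.

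The one technical difference is in how the embedding is implemented. The paper works directly in the Hilbert space $\bB^{d/2+\eps}_{2,2}$, applies the BDG inequality there, and bounds $\|Y_i\|_{\bB^{d/2+\eps}_{2,2}}\lesssim\|\phi_N\|_{\bB^{d/2+\eps}_{2,2}}\lesssim N^{\theta(d+\eps)}$ via \eqref{EQ:DILATION:03}; this is where the $\eps$-loss in the statement originates. You instead use the endpoint embedding $\bB^{d/2}_{2,1}\hookrightarrow L^\infty$, apply Rosenthal block-by-block in $L^2(\R^d)$, and then sum to recover $\|\phi_N\|_{\bB^{d/2}_{2,1}}\lesssim N^{\theta d}$. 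Your route is marginally sharper (it produces $N^{-1/2+\theta d}$ rather than $N^{-1/2+\theta d+\eps}$), at the price of handling each Littlewood--Paley block separately; the paper's route is a one-line application of the Hilbert-valued inequality but pays the $\eps$. Both are valid and the difference is cosmetic for the purposes of the theorem.
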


\begin{proof}
%Without loss of generality, suppose that the probability density function $\phi$ admits a convolutional decomposition $ \phi=\psi*\psi,$ and let $\psi_N(x):=N^{\theta d}\psi(N^{\theta}x),$ then we have $\phi_N=\psi_N*\psi_N.$  
Due to \eqref{0129:02} and \eqref{AB2} %\eqref{EQ:SEMIGROUP:01}  and \eqref{0105:03}, 
we have for any $\gamma\in(0,1)$ and $\eps>0$
\begin{equation}\label{EQ:U_0:02}
	\begin{aligned}
		\left\|P_t \cU_{0}^{N}\right\|_{\infty}
		& \leqslant\left\|P_t \rho_{0}-P_t(\rho_{0} * \phi_{N})\right\|_{\infty}+\left\|P_t( \rho_{0} * \phi_{N}-\mu_{0}^{N} * \phi_{N})\right\|_{\infty} \\
		& \lesssim t^{-\frac{d}{\alpha q}} t^{-\frac{\gamma}{\alpha}} \left\|\rho_{0}-\rho_{0} * \phi_{N}\right\|_{\bB^{-\gamma}_q}+\left\|\left(q_{\alpha}(t,\cdot)  * \phi_{N}\right) *\left(\rho_{0}-\mu_{0}^{N}\right)\right\|_{\bB^{\eps}_{\infty,2}} \\
		& \lesssim {t}^{-\frac{d}{\alpha q}} t^{-\frac{\gamma}{\alpha}}N^{-\gamma\theta}\left\|\rho_{0}\right\|_{q}+\left\|\left(q_{\alpha}(t,\cdot)  * \phi_{N}\right) *\left(\rho_{0}-\mu_{0}^{N}\right)\right\|_{\bB_{2,2}^{\frac{d}{2}+\eps {/ \theta}}},
	\end{aligned}
\end{equation}
where we applied \eqref{0129:01} and \eqref{Sob1} in the last inequality.

Since $\phi_N,q_{\alpha}(t,\cdot)\in \sS(\mR^{d})$, we define
\begin{align*}
    Y_i(\omega):=(q_{\alpha}(t,\cdot)  * \phi_{N}) *\left(\rho_{0}-\delta_{X_0^{N,i}(\omega)}\right)
\end{align*}
as a random variable taking values in  $\bB^{d/2+\varepsilon}_{2,2}$. Noting that $\bB^{d/2+\varepsilon}_{2,2}$ is a Hilbert space, and applying the BDG inequality for Hilbert valued martingale (see, e.g., \cite[Theorem 16.1.1]{veraar2019pointwise}), we have
\begin{align*}
    \left\|\left(q_{\alpha}(t,\cdot)  * \phi_{N}\right) *\left(\rho_{0}-\mu_{0}^{N}\right)\right\|_{L^m(\Omega;\bB^{{d}/{2}+\eps}_{2,2})}^m&=\left\|\frac1N\sum_{i=1}^NY_i\right\|_{L^m(\Omega;\bB^{{d}/{2}+\eps}_{2,2})}^m \\
    &\lesssim N^{-m} \mE\left(\sum_{i=1}^N\|Y_i\|_{\bB^{\frac{d}{2}+\eps}_{2,2}}^2\right)^{\frac{m}{2}},
\end{align*}
where it follows from \eqref{EQ:DILATION:03} and Young convolution inequality related to $\bB_{p,q}^{s}$ (see, e.g, \cite[Lemma 2.6]{HRZAoP}) that
\begin{align*}
    \sup_i\|Y_i\|_{\bB^{\frac{d}{2}+\eps}_{2,2}}&= \sup_i \| (q_{\alpha}(t, \cdot) \ast \phi_N) \ast (\rho_0-\delta_{X_0^{N,i}(\omega)}) \|_{B^{\frac{d}{2}+\varepsilon}_{2,2}} \\
& \lesssim  \sup_i \| q_\alpha(t, \cdot) \ast \phi_N \|_{B^{\frac{d}{2}+\varepsilon}_{2,2}} \| \rho_0-\delta_{X_0^{N,i}(\omega)} \|_{B^0_{1, \infty}} \\
& \lesssim  \sup_i \| q_\alpha(t, \cdot) \|_{B_{1,\infty}^0}  \| \phi_N \|_{B^{\frac{d}{2} +\varepsilon}_{2,2}} \| \rho_0-\delta_{X_0^{N,i}(\omega)} \|_{B^0_{1, \infty}}\\
&\lesssim \|\phi_N\|_{\bB^{\frac{d}{2}+\eps}_{2,2}}\lesssim N^{\theta d+\eps \theta},
\end{align*}
which implies that
\begin{align*}
    \left\|\left(q_{\alpha}(t,\cdot)  * \phi_{N}\right) *\left(\rho_{0}-\mu_{0}^{N}\right)\right\|_{L^m(\Omega;\bB^{d/2+\varepsilon /\theta}_{2,2})}^m\lesssim N^{-m(\frac12-d\theta-\eps)},
\end{align*}
provided by taking $\eps=\eps/\theta$.
Thus, by \eqref{EQ:U_0:02}, we get \eqref{EQ:U_0^N:09}. Noting that when $\rho_0\in \bC^\gamma$, 
\begin{align*}
    \left\|P_t \rho_{0}-P_t(\rho_{0} * \phi_{N})\right\|_{\infty}\le\left\|\rho_{0}-\rho_{0} * \phi_{N}\right\|_{\infty}\lesssim N^{-\gamma\theta}\|\rho_0\|_{\bC^\gamma},
\end{align*}
we obtain \eqref{newEQ:U_0^N:09} and complete the proof.
\end{proof}

To proceed, note that the expression of 
\begin{equation}\label{EQ:M_t^N:01}
	\|\cM_t^N\|_{L^m(\Omega;L^{\infty})} =\left\{\E\left[\sup_{x\in \R^d}\Big|\int_{0}^{t}P_{t-s}\d M_s^N(x)\Big|^{m}\right]\right\}^{\frac{1}{m}}.
\end{equation}
varies with different values of index $\alpha.$ 
In the following our proof arguments are first focused on establishing general estimates on \( \cM_{t}^{N} \) successively in the Brownian case \( \alpha=2 \) (see Theorem \ref{THM:M_t^N:04} below) and next the pure-jump case \( \alpha \in(1,2) \) (see Theorem \ref{THM:M_t^N:05} below). 

We first consider the case  of $\alpha =2$. 
%Following the general Banach-valued martingale’s BDG inequality (see, e.g., \cite[Theorem 16.1.1]{veraar2019pointwise}), this inequality states that, given $(E, | \cdot |_E)$ a unconditional martingale difference (UMD to be short) Banach space, for any $E$-valued martingale $\{ m_t \}_{t \ge 0}$ with quadratic variation process $\{ [m]_t \}_{t \ge 0}$ and any $p \in (1,\infty),$ we have, for all $t \ge 0,$
%\begin{equation} \label{EQ:BDG}
	%\| \sup_{ 0 \le u \le t} |m_u|_E \|_{L^p(\Omega)} \lesssim_{c_{E,p}} \sup_{0 \le u \le t} \| |m_u|_E \|_{L^p(\Omega)} \lesssim_{C_{E,p}} \| |[m]_t^{1/2} |_E \|_{L^p(\Omega)},
%\end{equation} 
%for some constants $C_{E,p}=C(E,p)$ and $c_{E,p}=c(E,p).$
\begin{theorem}\label{THM:M_t^N:04}
	Let \( \alpha=2\). For any \( m > 1 \) and $\eps>0$, there is a constant \( C=C\left(\theta,  \beta, m,\eps\right)>0 \) such that, for any \( N \geqslant 1 \),
	\begin{equation*}
\left\|\cM_{t}^{N}\right\|_{L^{m}\left(\Omega ; {L^\infty}\right)} \lesssim_C N^{-\frac{1}{2}+\theta d+\eps}.
	\end{equation*}
\end{theorem}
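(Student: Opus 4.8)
The plan is to estimate $\cM_t^N$ directly in a Hilbert space that embeds continuously into $L^\infty$, exploiting the independence of the driving Brownian motions through the Burkholder--Davis--Gundy (BDG) inequality for Hilbert-valued martingales. Recalling \eqref{EQ:Martingale:00}, for $\alpha=2$ one has
\[
  \cM_t^N(x)=\frac{\sqrt{2}}{N}\sum_{i=1}^{N}\int_0^t \big(P_{t-s}\nabla\phi_N\big)(x-X_s^{N,i})\,\d W_s^i .
\]
Fix $t\in(0,T]$ and small numbers $\eps_0,\delta\in(0,1)$, and set $\cH:=\bB^{d/2+\eps_0}_{2,2}$, a separable Hilbert space which embeds continuously into $L^\infty$ by \eqref{Sob1} and \eqref{AB2}. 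The structural observation is that, although $r\mapsto\cM_r^N$ is not a martingale (the kernel $P_{t-s}$ depends on the endpoint $t$), for \emph{fixed} $t$ the process $Z_r:=\int_0^r P_{t-s}\,\d M_s^N$, $r\in[0,t]$, is a continuous $\cH$-valued martingale with $Z_t=\cM_t^N$.

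First I would apply the BDG inequality to $Z$ (cf.\ \cite[Theorem~16.1.1]{veraar2019pointwise}). Since the $(W^i)$ are independent, the bracket of $Z$ equals (a constant times) $\tfrac{1}{N^2}\sum_{i=1}^N\int_0^t\|(P_{t-s}\nabla\phi_N)(\cdot-X_s^{N,i})\|_{\cH}^2\,\d s$, and by translation invariance of the Besov norm each summand is the \emph{deterministic} quantity $\|P_{t-s}\nabla\phi_N\|_{\cH}^2$; hence
\[
  \E\|\cM_t^N\|_{\cH}^m\lesssim_m \E\Big(\tfrac{2}{N^2}\sum_{i=1}^N\int_0^t\|P_{t-s}\nabla\phi_N\|_{\cH}^2\,\d s\Big)^{m/2}=\Big(\tfrac{2}{N}\int_0^t\|P_{t-s}\nabla\phi_N\|_{\cH}^2\,\d s\Big)^{m/2}.
\]

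Next I would bound the time integral by combining the semigroup estimate \eqref{0129:02} and the embedding \eqref{AB2} (with $\alpha=2$, a gain of $1-\delta$ derivatives costs $(t-s)^{-(1-\delta)/2}$) with the scaling bound \eqref{EQ:DILATION:03}, using also the Bernstein estimate $\|\nabla\phi_N\|_{\bB^{a}_{2,2}}\lesssim\|\phi_N\|_{\bB^{a+1}_{2,2}}\lesssim N^{\theta(a+1+d/2)}$:
\[
  \|P_{t-s}\nabla\phi_N\|_{\cH}\lesssim (t-s)^{-\frac{1-\delta}{2}}\,\|\nabla\phi_N\|_{\bB^{\,d/2-1+\eps_0+\delta}_{2,2}}\lesssim (t-s)^{-\frac{1-\delta}{2}}\,N^{\theta(d+\eps_0+\delta)}.
\]
Since $1-\delta<1$, the singularity is integrable on $[0,T]$, so $\int_0^t\|P_{t-s}\nabla\phi_N\|_{\cH}^2\,\d s\lesssim_T N^{2\theta(d+\eps_0+\delta)}$. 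Inserting this and using $\cH\hookrightarrow L^\infty$ gives
\[
  \|\cM_t^N\|_{L^m(\Omega;L^\infty)}\lesssim\|\cM_t^N\|_{L^m(\Omega;\cH)}\lesssim N^{-1/2+\theta d+\theta(\eps_0+\delta)},
\]
and choosing $\eps_0,\delta$ so small that $\theta(\eps_0+\delta)\le\eps$ completes the proof.

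The delicate points, I expect, are twofold. First, one must notice the fixed-$t$ reformulation $Z_r=\int_0^r P_{t-s}\,\d M^N_s$ so that a genuine Hilbert-valued martingale is in hand and BDG is legitimately applicable; without it the stochastic convolution is not a martingale in its own time variable. Second, the exponent bookkeeping is tight: the target space must sit inside $L^\infty$, forcing regularity just above $d/2$, while the heat smoothing used to absorb the extra derivative in $\nabla\phi_N$ is only allowed to produce a time singularity $(t-s)^{-\vartheta}$ with $\vartheta<1$, which is exactly what the Brownian scaling $\alpha=2$ affords. It is this balance that produces the exponent $-1/2+\theta d$, and hence, once combined with the other terms in \eqref{EQ:ERROR:03}, the moderate-interaction restriction $\theta<1/(2d)$. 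The pure-jump analogue (Theorem~\ref{THM:M_t^N:05}) should be harder, since there the same scheme must be run with the jump-type BDG inequality, which forces one to work in $L^p$-valued spaces with finite $p$ rather than in a Hilbert space.
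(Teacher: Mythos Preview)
Your proposal is correct and follows the same overall scheme as the paper: embed $L^\infty$ into the Hilbert space $\bB^{d/2+\eps_0}_{2,2}$, freeze the terminal time $t$ so that $r\mapsto\int_0^r P_{t-s}\,\d M_s^N$ is a genuine Hilbert-valued martingale, apply the BDG inequality, and use translation invariance to reduce matters to the deterministic integral $\int_0^t\|P_{t-s}\nabla\phi_N\|_{\cH}^2\,\d s$.

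The only genuine difference is in how this last integral is estimated. The paper bounds $\|P_{t-s}\nabla\phi_N\|_{\cH}$ by the \emph{minimum} of two endpoint estimates---the trivial one $\lesssim N^{(d+1+\eps)\theta}$ and the fully smoothed one $\lesssim (t-s)^{-(d+1)/2}N^{\eps\theta}$ (obtained by pushing the input down to $\bB^{\eps}_{1,\infty}$)---and then performs the change of variable $s\mapsto N^{2\theta}(t-s)$ to extract exactly the factor $N^{-2\theta}$. You instead use a single interpolated estimate, trading $1-\delta$ derivatives for a singularity $(t-s)^{-(1-\delta)/2}$ whose square is integrable. Both routes land on the same exponent; yours is a bit more direct, while the paper's min-of-two-bounds device makes the scaling structure $1\wedge(N^{-\theta}t^{-1/2})^{d+1}$ explicit and is the same mechanism reused in the jump case.

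One minor correction to your closing remark: in the pure-jump Theorem~\ref{THM:M_t^N:05} the paper does \emph{not} abandon the Hilbert space---it stays in $\bB^{d/2+\eps}_{2,2}$ and invokes the Hilbert-valued jump BDG inequality of \cite{CR14}, which produces both an $L^2$ and an $L^m$ integral against the L\'evy measure. The additional difficulty there is handling the second term and the first-difference $\delta_z^{(1)}\phi_N$, not a change of target space.
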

\begin{proof}
In view of \eqref{AB2} and \eqref{Sob1}, one sees that for any $\eps>0$
\begin{align*}
			\E [\| \mathcal{M}_t^N \|_{\infty}^m ] \lesssim \E [\| \mathcal{M}_t^N \|_{\bB^{\eps}_{\infty,2}}^m ]\lesssim \E [\| \mathcal{M}_t^N \|_{\bB^{\frac{d}{2}+\eps}_{2,2}}^m ].		
		\end{align*}
Noting that $\bB^{d/2+\varepsilon}_{2,2}$ is a Hilbert space, based on
Hilbert-valued martingale’s BDG inequality (cf. \cite[Theorem 16.1.1]{veraar2019pointwise}) to the stopped martingale, $u\to\int_{0}^{u}P_{t-s}\d M_s^N$, $u\in[0,t]$, we have
	\begin{align*}
			\E [\| \mathcal{M}_t^N \|_{\infty}^m ] 		
            &\lesssim \frac{1}{N^m} \E\Big[\Big(\sum_{i=1}^N\int_{0}^{t}\|P_{t-s}\nabla \phi_N(\cdot-X_{s}^{N,i})\|_{{\bB^{\frac{d}{2}+\eps}_{2,2}}}^{2}\d s\Big)^{\frac{m}{2}}\Big]\\
            &\lesssim \frac{1}{N^{m}}\Big(\sum_{i=1}^{N}\int_{0}^{t}\|P_{t-s}\nabla \phi_N\|_{{\bB^{\frac{d}{2}+\eps}_{2,2}}}^2\d s\Big)^{\frac{m}{2}}.
		\end{align*}
       Then based on \eqref{0129:02}, it follows from \eqref{EQ:DILATION:03} that
        \begin{align*}
              \|P_{t}\nabla \phi_N\|_{{\bB^{\frac{d}{2}+\eps}_{2,2}}}&\lesssim \|\nabla \phi_N\|_{{\bB^{\frac{d}{2}+\eps}_{2,2}}}\wedge (t^{-\frac{d+1}{2}}\|\phi_N\|_{\bB^\eps_{1,\infty}})\lesssim N^{(d+1+\eps)\theta}\wedge (t^{-\frac{d+1}{2}}N^{\eps\theta})\\
              &\lesssim N^{(d+1+\eps)\theta}[1\wedge (N^{-\theta}t^{-\frac12})^{d+1}],
            \end{align*}
        which by the change of variable implies that
       \begin{align*}
			\int_{0}^{t}\|P_{t-s}\nabla \phi_N\|_{{\bB^{\frac{d}{2}+\eps}_{2,2}}}^2\d s&\lesssim N^{2(d+1+\eps)\theta}\int_0^t  1\wedge [N^{-\theta}(t-s)^{-\frac12}]^{ 2(d+1)}\dif s\\
            &\lesssim N^{2(d+1+\eps)\theta}N^{-2\theta}\int_0^\infty  1\wedge s^{  -(d+1)}\dif s\lesssim N^{(2d+2\eps)\theta}.
		\end{align*}
        Therefore, by taking $\eps=\eps/\theta$, we have
        \begin{align*}
			\E [\| \mathcal{M}_t^N \|_{\infty}^m ] 		
            &\lesssim \frac{1}{N^{m}}\Big(N^{1+2d\theta+2\eps}\Big)^{\frac{m}{2}}=N^{m(-\frac12+d\theta+\eps)}
		\end{align*}
        and finish the proof.
\end{proof}

Next, we consider the case when $\alpha \in (1,2)$. In this stage, 
\begin{equation*}
	\begin{split}
		\cM_t^N(x) := \int_{0}^{t}P_{t-s}\d M_s^N(x) =\frac{1}{N}\sum_{i=1}^{N}\int_{0}^{t}\int_{\R^d_{*}}\xi_{t}^{i}(s,z_i)(x)\tilde{\cN}^{i}(\d s,\d z_i),
	\end{split}
\end{equation*}
wherein 
\begin{equation}\label{EQ:XI::03}
	\begin{split}
		\xi_{t}^{i}(s,z)(x)&= P_{t-s} [\phi_N(x-X_{s-}^{N,i}-z)-\phi_N(x-X_{s-}^{N,i})]\\
		&= P_{t-s}\delta_{-z}^{(1)}\phi_N(\cdot-X_{s-}^{N,i})(x).
	\end{split}
\end{equation}

\begin{theorem}\label{THM:M_t^N:05}
	Let \( ~1<\alpha<2 \). Then for any \( \beta \geqslant 0,~m \geqslant 1 \) and \( \varepsilon>0 \), there exists \( C=C(T, \beta, m, \varepsilon)>0 \) such that for all \( N \geqslant 1 \),
	\begin{equation}\label{EQ:M_t^N:03}
		\|\cM_{t}^{N}\|_{L^{m}(\Omega;L^{\infty})} \lesssim_C N^{-{1}/{2}+\theta({d+\varepsilon)}}.
	\end{equation}
\end{theorem}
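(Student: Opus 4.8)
\emph{Set-up.} The argument parallels the Brownian case (Theorem~\ref{THM:M_t^N:04}), replacing the BDG inequality for continuous Hilbert-valued martingales by a Kunita-type inequality for Hilbert-valued stochastic integrals against compensated Poisson random measures (cf.~\cite{CR14,veraar2019pointwise}). Exactly as there, \eqref{AB2} and \eqref{Sob1} give, for every $\eps>0$,
\[
\E\big[\|\cM_t^N\|_\infty^m\big]\lesssim\E\big[\|\cM_t^N\|_{\bB^{\eps}_{\infty,2}}^m\big]\lesssim\E\big[\|\cM_t^N\|_H^m\big],\qquad H:=\bB^{d/2+\eps}_{2,2},
\]
which is a Hilbert space, so it suffices to bound $\cM_t^N$ in $H$. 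For fixed $t$ the process $u\mapsto\frac1N\sum_{i=1}^N\int_0^u\int_{\R^d_{*}}\xi_t^i(s,z)\,\tilde{\cN}^i(\d s,\d z)$, $u\in[0,t]$, is an $H$-valued purely discontinuous martingale equal to $\cM_t^N$ at $u=t$, whose jump amplitudes $\xi_t^i(s,z)=P_{t-s}\delta_{-z}^{(1)}\phi_N(\cdot-X^{N,i}_{s-})$ from \eqref{EQ:XI::03} are predictable and, by translation invariance of Besov norms, satisfy $\|\xi_t^i(s,z)\|_H=\|P_{t-s}\delta_{-z}^{(1)}\phi_N\|_H$, a \emph{deterministic} quantity. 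Applying the Kunita/BDG inequality ($m\ge2$) to this (stopped) martingale yields
\[
\E\big[\|\cM_t^N\|_H^m\big]\lesssim N^{-m/2}\,I_2^{m/2}+N^{1-m}\,I_m,\qquad I_p:=\int_0^t\!\!\int_{\R^d_{*}}\|P_{t-s}\delta_{-z}^{(1)}\phi_N\|_H^p\,\nu^{(\alpha)}(\d z)\,\d s,
\]
so everything reduces to estimating $I_2$ and $I_m$.

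\emph{The key kernel estimate.} For every $\vartheta\in(0,d+\eps)$ and every $\gamma\in(\alpha/2,1)$ --- a non-empty range because $\alpha<2$ --- one has
\[
\|P_r\,\delta_{-z}^{(1)}\phi_N\|_H\lesssim r^{-\frac{d+\eps-\vartheta}{\alpha}}\,N^{\theta\vartheta}\,(N^{\theta}|z|\wedge1)^{\gamma}.
\]
Indeed, \eqref{0129:02} (mapping $\bB^{\vartheta}_1$ into $\bB^{d/2+\eps}_{2,1}\hookrightarrow H$, at the cost of the factor $r^{-(d+\eps-\vartheta)/\alpha}$) reduces this to $\|\delta_{-z}^{(1)}\phi_N\|_{\bB^{\vartheta}_1}$, which is bounded both by $\|\phi_N\|_{\bB^{\vartheta+\gamma}_1}(|z|^\gamma\wedge1)$ (via \eqref{Ho1}) and by $\|\phi_N\|_{\bB^{\vartheta}_1}$; the scaling inequality \eqref{EQ:DILATION:03} with $p=1$ then gives $\|\phi_N\|_{\bB^{\vartheta}_1}\lesssim N^{\theta\vartheta}$ and $\|\phi_N\|_{\bB^{\vartheta+\gamma}_1}\lesssim N^{\theta(\vartheta+\gamma)}$, and $\min(N^{\theta\vartheta},N^{\theta(\vartheta+\gamma)}(|z|^\gamma\wedge1))=N^{\theta\vartheta}(N^{\theta}|z|\wedge1)^\gamma$. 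Since $\nu^{(\alpha)}(\d z)=|z|^{-d-\alpha}\d z$, a polar-coordinate computation gives $\int_{\R^d_{*}}(N^{\theta}|z|\wedge1)^{p\gamma}|z|^{-d-\alpha}\d z\asymp N^{\alpha\theta}$ whenever $p\gamma>\alpha$, hence
\[
I_p\lesssim\Big(\int_0^t(t-s)^{-\frac{p(d+\eps-\vartheta)}{\alpha}}\,\d s\Big)\,N^{p\theta\vartheta+\alpha\theta}.
\]

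\emph{Choice of parameters.} For $p=2$, take $\vartheta=d+\eps-\tfrac\alpha2+\delta$ with $\delta>0$ small, which lies in $(0,d+\eps)$ since $d\ge1>\tfrac\alpha2$; then $\int_0^t(t-s)^{-1+2\delta/\alpha}\d s\le C(T,\delta)$, so $I_2\lesssim C(T,\delta)N^{2\theta d+2\theta(\eps+\delta)}$ and $N^{-1/2}I_2^{1/2}\lesssim C(T,\delta)N^{-1/2+\theta(d+\eps+\delta)}$. For $p=m$ with $m\ge4$ (so $\tfrac{1+\alpha\theta/2}{m}<\tfrac12$, using $\alpha\theta<1$), take $\vartheta=d+\eps-\tfrac{\alpha}{2m}\in(0,d+\eps)$; then $\int_0^t(t-s)^{-1/2}\d s\le2\sqrt T$, $m\gamma>\alpha$ holds, and $I_m\lesssim C(T)N^{m\theta(d+\eps)+\alpha\theta/2}$, whence $N^{(1-m)/m}I_m^{1/m}\lesssim C(T)N^{\frac{1+\alpha\theta/2}{m}-1+\theta(d+\eps)}\le C(T)N^{-1/2+\theta(d+\eps)}$. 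Summing and relabelling $\eps$ proves \eqref{EQ:M_t^N:03} for $m\ge4$; for $1\le m<4$ it follows from $\|\cM_t^N\|_{L^m(\Omega;L^\infty)}\le\|\cM_t^N\|_{L^4(\Omega;L^\infty)}$.

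\emph{Main obstacle.} The naive choice $\vartheta=d+\eps$ (no time power lost by $P_r$) leaves an extra factor $N^{\alpha\theta/2}$ in $N^{-1/2}I_2^{1/2}$, coming from the singularity and heavy tail of $\nu^{(\alpha)}$, which would spoil the exponent. Removing it forces one to use strictly less than the full smoothing of $P_r$, trading $\alpha/2$ orders of regularity for a barely integrable time singularity in the $s$-integral; this is the jump analogue of the computation $\int_0^t 1\wedge[N^{-\theta}(t-s)^{-1/2}]^{2(d+1)}\,\d s\lesssim N^{-2\theta}$ used for $\alpha=2$. Balancing these exponents, and checking that $p\gamma>\alpha$ is compatible with $\gamma<1$ --- all a first-order difference operator allows, and possible precisely because $\alpha<2$ --- is the main technical point.
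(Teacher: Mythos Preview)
Your proof is correct and follows essentially the same route as the paper: embed $L^\infty$ into the Hilbert space $H=\bB^{d/2+\eps}_{2,2}$, apply the Kunita-type BDG inequality from \cite{CR14} to the stopped $H$-valued martingale, and bound the kernel $\|P_{t-s}\delta_{-z}^{(1)}\phi_N\|_H$ by balancing semigroup smoothing against the first-order difference (which requires exactly $\gamma\in(\alpha/2,1)$, available since $\alpha<2$). The only cosmetic difference is that the paper proves $\|\xi^i_t(s,z)\|_H\lesssim N^{\theta(d+\eps)}\big(1\wedge|z|^{(\alpha+\eps)/2}(t-s)^{-1/2}\big)$ and then decouples the $s$- and $z$-integrals via the substitution $z=(t-s)^{1/(\alpha+\eps)}z'$, whereas you separate them from the outset through the auxiliary parameter $\vartheta$; note also that the paper makes the product-space lift \eqref{EQ:NU:01} explicit to justify applying \cite{CR14} to the sum over $i$, a step you compress but use implicitly.
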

\begin{proof}
	To estimate  \eqref{EQ:M_t^N:01} when $\alpha\in(1,2)$, we take adapt the technique that developed in \cite{HAOETAL2024ARXIV}. So we need first to lift \( \cM^{N} \) on the product space \( \mathbb{R}_{*}^{N d}:= \mathbb{R}^{N d} \backslash\{0\} \) as follows, for
	\begin{equation*}
		\boldsymbol{L}_{t}^{N}:=(L_{t}^{\alpha, 1}, \cdots, L_{t}^{\alpha, N}),
	\end{equation*}
	the overall noise driving the particle system \eqref{MIP} and $ \boldsymbol{z}=(z_1,\cdots,z_N) \in \R^{Nd}$, let \( \boldsymbol{\cN}^{N}(\d s, \d \boldsymbol{z}) \) denote the jump measure of \( \boldsymbol{L}^{N} \) and \( \tilde{\boldsymbol\cN}^{N}(\d s,\d \boldsymbol{z}) \) the related compensated measure, respectively defined as: for all \( t \in(0, T] \) and \( \Gamma \in \mathscr{B}\left(\mathbb{R}_{*}^{N d}\right) \),
	\begin{equation*}
		\boldsymbol{\cN}^{N}((0, t], \Gamma):=\sum_{0<s \leqslant t} \mathbbm{1}_{\Gamma}\left(\Delta \boldsymbol{L}_{s}^{N}\right), \quad \tilde{\cN}^{N}(\mathrm{d} s, \mathrm{d} \boldsymbol{z}):=\cN^{N}(\mathrm{d} s, \mathrm{d} \boldsymbol{z})-\boldsymbol{\nu}(\mathrm{d} \boldsymbol{z}) \mathrm{d} s,
	\end{equation*}
	where \( \boldsymbol{\nu}(\mathrm{d} \boldsymbol{z}) \) is the \levy~ measure of \( \boldsymbol{L}^{N} \). Since the \( L^{\alpha, i} ,~1\leq i\leq N,\) are independent, their jumps \( \Delta L^{\alpha, i} \neq 0 \) never occur at the same time. This implies that the support of the \levy~ measure  $ \boldsymbol{\nu}(\d \boldsymbol{z}) $ in the product space $ \mathbb{R}_{*}^{N d} $ is restricted to the coordinate axes. It follows that \( \boldsymbol{\nu} \) and \( \boldsymbol{\cN}^{N} \) admit the following representations, respectively:
	\begin{equation}\label{EQ:NU:01}
		\begin{split}
			\boldsymbol{\nu}(\mathrm{d} \boldsymbol{z})=\sum_{i=1}^{N} \delta_{0}\left(\mathrm{d} z_{1}\right) \cdots \delta_{0}\left(\mathrm{d} z_{i-1}\right) \nu\left(\mathrm{d} z_{i}\right) \delta_{0}\left(\mathrm{d} z_{i+1}\right) \cdots \delta_{0}\left(\mathrm{d} z_{N}\right), \\
			\boldsymbol{\cN}^{N}(\d s, \mathrm{d} \boldsymbol{z})=\sum_{i=1}^{N} \delta_{0}\left(\mathrm{d} z_{1}\right) \cdots \delta_{0}\left(\mathrm{d} z_{i-1}\right) \cN^{i}\left(\d s, \mathrm{d} z_{i}\right) \delta_{0}\left(\mathrm{d} z_{i+1}\right) \cdots \delta_{0}\left(\mathrm{d} z_{N}\right),
		\end{split}
	\end{equation}
	for ${\delta_{0}}$ the Dirac measure in $0$. In particular, for any $1\leq {i}\leq N$, since ${\xi_{t}^{i}(s, 0)(x)=0}$ and  the measure ${\tilde{\cN}^{N}}$ only supports one jump at any given time, we have,
	\begin{equation*}
		\begin{split}
			&\quad 	\int_{\mathbb{R}_{*}^{N d}} \xi_{t}^{i}\left(s, z_{i}\right)(x) \tilde{\cN}^{N}(\mathrm{d} s, \mathrm{d} \boldsymbol{z}) \\
			&=\sum_{j=1, j \neq i}^{N} \int_{\mathbb{R}_{*}^{d}} \xi_{t}^{i}(s, 0)(x) \mathbbm{1}_{\left\{z_{j} \neq 0\right\}} \tilde{\cN}^{j}\left(\mathrm{d} s, \mathrm{d} z_{j}\right)+\int_{\mathbb{R}_{*}^{d}} \xi_{t}^{i}\left(s, z_{i}\right)(x) \tilde{\cN}^{i}\left(\mathrm{d} s, \mathrm{d} z_{i}\right) \\
			&=\int_{\mathbb{R}_{*}^{d}} \xi_{t}^{i}\left(s, z_{i}\right)(x) \tilde{\cN}^{i}\left(\mathrm{d} s, \mathrm{d} z_{i}\right).
		\end{split}
	\end{equation*}
	As such, if we next introduce the predictable process
	\begin{equation*}
		\boldsymbol{\xi}_{t}^{N}(s, \boldsymbol{z})(x):=\frac{1}{N} \sum_{i=1}^{N} \xi_{t}^{i}\left(s, z_{i}\right)(x), \quad  0 \leqslant s \leqslant t \leqslant T, x \in \mathbb{R}^{d},
	\end{equation*}
    where $\xi_t^i(s,z)$ is given by \eqref{EQ:XI::03},
	then ${\cM_{t}^{N}(z)}$ can be rewritten as
	\begin{equation*}
		\cM_{t}^{N}(x)=\int_{0}^{t} \int_{\mathbb{R}_{*}^{N d}} \boldsymbol{\xi}_{t}^{N}(s, \boldsymbol{z})(x) \tilde{\cN}^{N}(\mathrm{d} s, \mathrm{d} \boldsymbol{z}) .
	\end{equation*}
	Since $\bB^{\frac{d}{2}+\eps}_{2,2}$ is a Hilbert space with any $\eps>0$, applying  \cite[Theorem 1]{CR14} to the stopped martingale
	\begin{equation*}
		\cM_{u, t}^{N}(x)=\int_{0}^{u} \int_{\mathbb{R}_{*}^{N d}} \boldsymbol{\xi}_{t}^{N}(s, \boldsymbol{z})(x) \tilde{\cN}^{N}(\mathrm{d} s, \mathrm{d} \boldsymbol{z}), \quad  u \in[0, t].
	\end{equation*}
	We have, for any ${m \in \mathbb{N}}$ and $m\ge2$,	
	\begin{equation}\label{EQ:M_t^N:02}
		\begin{aligned}
			\mathbb{E}[\|\cM_{t}^{N}(\cdot)\|_{\infty}^{m}  & \lesssim \mathbb{E}[\sup _{0 \leqslant u \leqslant t}\|\cM_{u, t}^{N}(\cdot)\|_{\bB^{\frac{d}2+\eps}_{2,2}}^m] \\
			& \lesssim  \mathbb{E}\Big(\int_{0}^{t} \int_{\mathbb{R}_{*}^{N d}}\Big\|\boldsymbol{\xi}_{t}^{N}(s, \boldsymbol{z})(\cdot)\Big\|_{\bB^{\frac{d}2+\eps}_{2,2}}^{2} \boldsymbol{\nu}(\mathrm{d} \boldsymbol{z}) \mathrm{d} s\Big)^{m/2}\\
             &\quad +\mathbb{E}\int_{0}^{t} \int_{\mathbb{R}_{*}^{N d}}\Big\|\boldsymbol{\xi}_{t}^{N}(s, \boldsymbol{z})(\cdot)\Big\|_{\bB^{\frac{d}2+\eps}_{2,2}}^{m} \boldsymbol{\nu}(\mathrm{d} \boldsymbol{z}) \mathrm{d} s.
		\end{aligned}
	\end{equation}

	According to \eqref{EQ:NU:01}, for any $k\in\mN$,
	\begin{equation*}
		\int_{\mathbb{R}_{*}^{N d}}\left\|\boldsymbol{\xi}_{t}^{N}(s, \boldsymbol{z})(\cdot)\right\|_{\bB^{\frac{d}2+\eps}_{2,2}}^{{k}} \boldsymbol{\nu}(\mathrm{d} \boldsymbol{z})=\frac{1}{N^{{k}}} \sum_{i=1}^{N} \int_{\mathbb{R}_{*}^{d}}\left\|\xi_{t}^{i}\left(s,z_{i}\right)(\cdot)\right\|_{\bB^{\frac{d}2+\eps}_{2,2}}^{{k}} \nu^{(\alpha)}\left(\mathrm{d} z_{i}\right),
	\end{equation*}
	and then for $k\le m$,
	\begin{equation}\label{EQ:XI:01}
\begin{split}
    	&	\quad \mathbb{E}\Big[\Big(\int_{0}^{t} \int_{\mathbb{R}_{*}^{N d}}\|\boldsymbol{\xi}_{t}^{N}(s, \boldsymbol{z})(\cdot)\|_{\bB^{\frac{d}2+\eps}_{2,2}}^{k} \boldsymbol{\nu}(\mathrm{d} \boldsymbol{z}) \mathrm{d} s\Big)^{\frac{m}{k}}\Big]\\
            & =\frac{1}{N^m} \mathbb{E}\Big[\Big(\sum_{i=1}^{N} \int_{0}^{t} \int_{\mathbb{R}_{*}^{d}}\|\xi_{t}^{i}(s, z)(\cdot)\|_{\bB^{\frac{d}2+\eps}_{2,2}}^{{k}} \nu^{(\alpha)}(\mathrm{d} z) \mathrm{d} s\Big)^{\frac{m}{k}}\Big],
\end{split}
	\end{equation}
	where \eqref{Ho1} gives for $\eps\in(0,1-\alpha/2)$,
	\begin{equation}\label{EQ:XI:02}
		\begin{aligned}
			\left\|\xi_{t}^{i}(s, z)(\cdot)\right\|_{\bB^{\frac{d}2+\eps}_{2,2}} & =\left\|P_{t-s}\left(\delta_{-z}^{(1)} \phi_{N}\right)(X_{s-}^{N, i}-\cdot)\right\|_{\bB^{\frac{d}2+\eps}_{2,2}}=\left\|P_{t-s}\delta_{-z}^{(1)}  \phi_{N}\right\|_{\bB^{\frac{d}2+\eps}_{2,2}} \\
			& \lesssim \left(\| \phi_{N}\|_{\bB^{\frac{d}2+\eps}_{2,2}}\wedge \left[|z|^{\frac{\alpha+\eps}{2}}\|P_{t-s}  \phi_{N}\|_{\bB^{\frac{d+\alpha+3\eps}{2}}_{2,2}}\right]\right)\\
            & \lesssim \left(\| \phi_{N}\|_{\bB^{\frac{d}2+\eps}_{2,2}}\wedge \left[|z|^{\frac{\alpha+\eps}{2}}(t-s)^{-\frac{1}{2}}\|  \phi_{N}\|_{\bB^{\frac{d+3\eps}{2}}_{2}}\right]\right),
		\end{aligned}
	\end{equation}
    where we used \eqref{0129:02} in the last inequality.
	Hence, taking $\eps=2\eps/3$, noting the embedding relationship \eqref{AB2}, applying \eqref{EQ:DILATION:03} in \eqref{EQ:XI:02}, we have
    \begin{align*}
        \left\|\xi_{t}^{i}(s, z)(\cdot)\right\|_{\bB^{\frac{d}2+\eps}_{2,2}}\lesssim \|  \phi_{N}\|_{\bB^{\frac{d}{2}+\eps}_{2}}\left(1\wedge \left[|z|^{\frac{\alpha+\eps}{2}}(t-s)^{-\frac{1}{2}}\right]\right)\lesssim N^{\theta(d+\eps)}\left(1\wedge \left[|z|^{\frac{\alpha+\eps}{2}}(t-s)^{-\frac{1}{2}}\right]\right),
    \end{align*}
    which by \eqref{EQ:XI:01} and \eqref{EQ:M_t^N:02} yields
		\begin{align*}
			&\quad 	(\mathbb{E}[\|\cM_{t}^{N}\|_{\infty}^{{m}}])^{1 / m} \\
			& \lesssim  N^{-1+\theta(d+\eps)} \Big(\sum_{k=2,m} \Big(N\int_{0}^{t} \int_{\mathbb{R}_{*}^{d}} {\left(1\wedge \left[|z|^{\frac{\alpha+\eps}{2}}(t-s)^{-\frac{1}{2}}\right]\right)^k}\nu^{(\alpha)}(\mathrm{d} z) \mathrm{d} s\Big)^{\frac{m}{k}}\Big)^{\frac1{m}} \\
            &\lesssim  N^{-\frac12+\theta(d+\eps)} \sum_{k=2,m}\Big(\int_{0}^{t} \int_{\mathbb{R}_{*}^{d}} {\left(1\wedge \left[|z|^{\frac{\alpha+\eps}{2}}(t-s)^{-\frac{1}{2}}\right]\right)^k}\nu^{(\alpha)}(\mathrm{d} z) \mathrm{d} s\Big)^{\frac1{k}} \\
			&\lesssim  N^{-\frac12+\theta(d+\eps)} \sum_{k=2,m}\Big(\int_{0}^{t}(t-s)^{-\frac{\alpha}{\alpha+\eps}} \int_{\mathbb{R}_{*}^{d}} {(1\wedge |z'|^{\frac{\alpha+\eps}{2}})^k}\nu^{(\alpha)}(\mathrm{d} z') \mathrm{d} s\Big)^{\frac1{k}}\lesssim N^{-\frac12+\theta(d+\eps)},
		\end{align*}
    where we used a change of variable $z=(t-s)^{\frac1{\alpha+\eps}}z'$ in the last second inequality and \eqref{EQ:0116:10} in the last inequality. This completes the proof.
\end{proof}

Now we are ready to prove Theorem \ref{THM:CONVERGENCE:01}.
\begin{proof}[Proof of Theorem \ref{THM:CONVERGENCE:01}]
Notice
 \begin{equation*}
     \| \rho_t-\rho_t^N \|_{L^m(\Omega;L^\infty)} =\left\{  \E \left[ \| \mathcal{U}_t^N \|^m_{\infty} \right]  \right\}^{\frac{1}{m}},
 \end{equation*}
 where by \eqref{EQ:0110:01}, we have{
 \iffalse
 \begin{align*}
       \| \cU_t^N \|_{\infty}
    &\lesssim \int_{0}^{t} (t-s)^{-\frac{1+d/q}{\alpha}}  \| P_s \cU_0^N \|_{\infty}\dif s+N^{-\theta \beta}t^{\frac{2(\alpha-1-\beta)-3d/q}{\alpha}}\no\\
    &\quad+\int_{0}^{t} (t-s)^{-\frac{1}{\alpha}} s^{-\frac{\beta+d/q}{\alpha}} \left\|\int_{0}^{s} P_{s-r}\d M_r^N(\cdot)\right\|_\infty\dif s+ \| P_t \cU_0^N \|_{\infty} \no \\
    &\quad+ N^{-\theta \beta}t^{\frac{\alpha-1-\beta-{2d}/{q}}{\alpha}}+\left\|  \int_{0}^{t} P_{t-s} \dif M_s^N(\cdot)   \right\|_{\infty}. \no
\end{align*}
By $C_r$-inequality, we have
\fi
\begin{align*}
    \left\{\E \left[ \| \mathcal{U}_t^N \|^m_{\infty} \right] \right\}^{\frac{1}{m}} \lesssim & \left\{\E \left[\left(  \int_{0}^{t} (t-s)^{-\frac{1+d/q}{\alpha}} \| P_s \cU_0^N \|_{\infty}\dif s \right)^m \right]  \right\}^{\frac{1}{m}}+ N^{-\theta \beta}t^{-\frac{d}{q\alpha}}  \no \\
    & + \left\{\E \left[\left( \int_{0}^{t} (t-s)^{-\frac{1+d/q}{\alpha}} \left\|\int_{0}^{s} P_{s-r}\d M_r^N(\cdot)\right\|_\infty\dif s \right)^m \right] \right\}^{\frac{1}{m}} \no \\
    &  + \left\{\E \left[ \| P_t \cU_0^N \|_{\infty}^m \right]\right\}^{\frac{1}{m}}+  \left\{ \E \left[\left\|  \int_{0}^{t} P_{t-s} \dif M_s^N(\cdot)   \right\|_{\infty}^m \right] \right\}^{\frac{1}{m}}.
\end{align*}
}
Firstly, by taking $\gamma =\beta$ in \eqref{EQ:U_0^N:09} we have\\
% \begin{align}\label{EQ:0114:07}
%     \begin{split}
%           \left\{\E \left[ \| P_t \cU_0^N \|_{\infty}^m \right]\right\}^{\frac{1}{m}} &=\| P_t \cU_0^N \|_{L^m(\Omega;L^\infty)} \\
%    & \lesssim t^{-\frac{\beta}{\alpha}-\frac{d}{\alpha q}} N^{-\beta \theta} \|\rho_0 \|_q+t^{-\frac{\beta}{\alpha}} N^{-\frac{1}{2}+ \theta (d-\beta)}\\
%    &\lesssim t^{-\frac{\beta}{\alpha
%    }} N^{-\beta\theta} \left(t^{-\frac{d}{\alpha q}}+N^{-\frac{1}{2}+\theta d}\right)\\
%    &\lesssim t^{-\frac{\beta}{\alpha}-\frac{d}{\alpha q}} N^{-\theta \beta }.
%     \end{split}
% \end{align}
\begin{align}\label{EQ:0114:07}
    \begin{split}
          \left\{\E \left[ \| P_t \cU_0^N \|_{\infty}^m \right]\right\}^{\frac{1}{m}} &=\| P_t \cU_0^N \|_{L^m(\Omega;L^\infty)} \\
   & \lesssim t^{-\frac{\beta}{\alpha}-\frac{d}{\alpha q}} N^{-\beta \theta} \|\rho_0 \|_q+ N^{-\frac{1}{2}+ \theta d+\varepsilon}
    \end{split}
\end{align}
Then, for $\alpha=2,$ by Theorem \ref{THM:M_t^N:04},
\begin{align*}
    \left\{ \E \left[\left\|  \int_{0}^{t} P_{t-s} \dif M_s^N(\cdot)   \right\|_{\infty}^m \right] \right\}^{\frac{1}{m}} &= \left\| \int_{0}^{t} P_{t-s} \dif M_s^N(\cdot)  \right\|_{L^m(\Omega;L^\infty)} \no \\
    & \lesssim N^{-\frac{1}{2}+\theta(d+\varepsilon)},
\end{align*}
and for $\alpha \in (1,2),$ due to \eqref{EQ:M_t^N:03},
\begin{align*}
    \left\{ \E \left[\left\|  \int_{0}^{t} P_{t-s} \dif M_s^N(\cdot)   \right\|_{\infty}^m \right] \right\}^{\frac{1}{m}} & \lesssim N^{-{1}/{2}+\theta({d+\varepsilon)}}.
\end{align*}
Moreover, by  Minkowski’s inequality  and \eqref{EQ:0114:07}, we have{
\begin{align*}
    \left\|\int_{0}^{t} (t-s)^{-\frac{1+d/q}{\alpha}} \| P_s \cU_0^N \|_{\infty}\dif s\right\|_{L^m(\Omega)} &\leq   \int_{0}^{t} (t-s)^{-\frac{1+d/q}{\alpha}}  \Big\| \| P_s \cU_0^N \|_{\infty} \Big\|_{{L^m(\Omega)}}\dif s\\
    &= \int_{0}^{t} (t-s)^{-\frac{1+d/q}{\alpha}}   \| P_s \cU_0^N \|_{L^m(\Omega;L^{\infty})} \dif s\\
    & \lesssim \int_{0}^{t}(t-s)^{-\frac{1+d/q}{\alpha}} \left(s^{-\frac{\beta}{\alpha}-\frac{d}{\alpha q}} N^{-\beta \theta}+ N^{-\frac{1}{2} + \theta d+ \varepsilon}\right) \d s\\
    &\lesssim t^{\frac{\alpha-1-\beta-2d/q}{\alpha}}N^{-\theta\beta} + t^{\frac{\alpha-d/q-1}{\alpha}} N^{-\frac{1}{2} + \theta d+ \varepsilon} \\
    &\lesssim t^{-\frac{d}{q\alpha}}N^{-\theta\beta} +N^{-\frac{1}{2} + \theta d+ \varepsilon}.
\end{align*}
%where $\textcolor{red}{2(\beta+\frac{d}{q})<\alpha}$  ensuring the finiteness of the integral $\int_{0}^{t}(t-s)^{-\frac{1}{\alpha}}s^{-\frac{2(\beta+q/d)}{\alpha}} \d s.$
}
To sum up, it follows that
\begin{equation*}
    \begin{split}
     \| \rho_t-\rho_t^N \|_{L^m(\Omega;L^\infty)} 
     &\lesssim t^{-\frac{\beta+d/q}{\alpha}}N^{-\theta\beta}+N^{-1/2+\theta d+\eps}.
    \end{split}
\end{equation*}
Similarly, \eqref{EQ:CONVERGENCE:010} is from \eqref{newEQ:0110:01}, \eqref{newEQ:U_0^N:09} and Theorem \ref{THM:M_t^N:04}-\ref{THM:M_t^N:05}.
The proof is complete.
\end{proof}

\section{Proof of Theorem \ref{THM:MAINCONVERGENCE:01}}\label{sec:4}
In this section, we use the convergence results of empirical measure given in Section \ref{SEC:EMPIRICAL} to prove Theorem \ref{THM:MAINCONVERGENCE:01}.

Before this, we outline the proofs of the weak and strong convergence results respectively, which proceed via distinct techniques tailored to each case. For the weak convergence, we employ the \ito-Tanaka trick. Specifically, we first consider the linearized PDE driven by $\rho_t$, where $\rho_t$ denotes the time marginal distributional density of the solution to the dDSDE. By applying It\^o's formula related to the solution of this linearized PDE, we derive a formulation quantifying the weak difference between the law of the particle system and the solution of the limiting equation (i.e., the solution to the dDSDE). Using regularity estimates for the linearized PDE's solution, we establish weak convergence in total variation. 

For the pathwise convergence, we employ the Zvonkin transformation, constructed via the solution to a backward PDE with well-established a priori estimates. By killing the singular drift coefficient $b$ and transforming it into a Lipschitz-continuous drift term, the Zvonkin transformation allows us to directly compare paths and derive pathwise convergence through a straightforward difference estimate.
\begin{proof}[Proof of Theorem \ref{THM:MAINCONVERGENCE:01}] (i) Fix $\varphi \in C_{b}^{\infty}\left(\mathbb{R}^{d}\right)$, the space of smooth and bounded functions defined on $\R^d$, and set $B_{t}^{T}(x):=B_{T-t}(x)=b(T-t,x,\rho_{T-t}(x))$. By \cite[Theorem 4.2-(i)]{HRZAoP}, there is a unique solution to the following PDE:
\begin{equation*}
    \partial_{t} u=\Delta^{\frac{\alpha}{2}} u+B^{T}_t \cdot \nabla u \quad \text { on }[0, T] \times \mathbb{R}^{d},  \quad {u}(0)=\varphi.
\end{equation*}
%Its Duhamel formulation is further given by
It follows from Duhamel's formula that
\begin{equation}\label{EQ:0111:02}
    u(t)=P_{t} \varphi+\int_{0}^{t} P_{t-s}[B_{s}^{T} \cdot \nabla u(s)] \mathrm{d} s,  \quad t \in[0, T].
\end{equation}
Noting
\begin{equation*}
    \|\nabla P_t f\|_{\infty}\lesssim t^{-\frac{1}{\alpha}}\|f\|_{\infty},
\end{equation*}
by \eqref{EQ:0111:02}, we have

\begin{align*}
    \left\|\nabla u(t)\right\|_{\infty}
    &\lesssim t^{-\frac{1}{\alpha}}\|\varphi\|_{\infty}+\int_{0}^{t}(t-s)^{-\frac{1}{\alpha}}\left\|B_{s}^{T}\right\|_{\infty}\left\|\nabla u(s)\right\|_{\infty} \mathrm{d} s \\
    &\lesssim t^{-\frac{1}{\alpha}}\|\varphi\|_{\infty}+\int_{0}^{t}(t-s)^{-\frac{1}{\alpha}}\left\|\nabla u(s)\right\|_{\infty} \mathrm{d} s,
\end{align*}
then by Gronwall's inequality of Volterra-type, we have
\begin{equation}\label{EQ:0111:04}
    \left\|\nabla u(t)\right\|_{{\infty}} \lesssim t^{-\frac{1}{\alpha}}\|\varphi\|_{{\infty}}.
\end{equation}
By applying the generalized version of \ito's formula to $t \mapsto u(T-t, Y_t)$ stated in \cite[Lemma 4.3]{HRZAoP} with $Y_t=X_t$ and $X^{N,1}_t$ respectively, we have
\begin{equation*} \mathbb{E}\left[\varphi\left(X_{T}\right)\right]=\mathbb{E}\left[u\left(0, X_{T}\right)\right]=\mathbb{E}\left[u\left(T, X_{0}\right)\right],
\end{equation*}
and
\begin{equation*}
    \begin{split}
        \mathbb{E}[\varphi(X_{T}^{N, 1})]
        &=\mathbb{E}[u(0, X_{T}^{N, 1})]\\
        &=\mathbb{E}\left[u(T,X_0 )\right]
+\mathbb{E}\Big[\int_{0}^{T}\left(b(s,X_s^{N,1},\rho_s^N(X_s^{N,1}))-b(s,X_s^{N,1},\rho_s(X_s^{N,1}))\right)\\
        &\qquad \qquad \qquad\qquad \qquad\cdot\nabla u(T-s,X_s^{N,1}) \mathrm{d} s\Big].
    \end{split}
\end{equation*}
Thus, by  \eqref{EQ:0111:04}, the Lipschitz continuity of $b(t,x,\cdot)$ and  \eqref{EQ:CONVERGENCE:01} for $m=1$,
\begin{align*}    |\mathbb{E}\left[\varphi\left(X_{T}\right)\right]-\mathbb{E}[\varphi(X_{T}^{N, 1})]| & \leqslant \mathbb{E}\left[\int_{0}^{T}|b(s,x,\rho_s^N(x))-b(s,x,\rho_s(x))|\left\|\nabla u(T-s)\right\|_{\infty} \mathrm{d} s\right] \\
& \lesssim\|\varphi\|_{\infty} \int_{0}^{T}(T-s)^{-\frac{1}{\alpha}}\left\|\rho_s^N-\rho_s\right\|_{L^1(\Omega;L^\infty)} \mathrm{d} s\\
&\lesssim \|\varphi\|_{\infty}\int_{0}^{T}(T-s)^{-\frac{1}{\alpha}}\Big(s^{-\frac{\beta+d/q}{\alpha}}N^{-\theta\beta}+N^{-1/2+\theta d+\eps}\Big)\d s\\
&\lesssim (N^{-\theta\beta}+N^{-1/2+\theta d+\eps})\|\varphi\|_{\infty},
\end{align*}
where $\beta+\frac{d}{q}<\alpha$.
Then \eqref{EQ:CONVERGENCE:02} follows from the following observation,
\begin{equation*}
    \begin{aligned}
\left\|\mathbb{P} \circ\left(X_{t}\right)^{-1}-\mathbb{P} \circ(X_{t}^{N,1})^{-1}\right\|_{\text {var}} =\sup _{\varphi \in C_{b}^{\infty}(\mathbb{R}^{d}) ;\|\varphi\|_{\infty}\leq 1}\left|\mathbb{E} \varphi\left(X_{t}\right)-\mathbb{E} \varphi(X_{t}^{N,1})\right| .
\end{aligned}
\end{equation*}

(ii) Without loss of generality, it suffices to prove the case for $m\geq 2$. The result for $m\le 2$ follows from the conclusion for $m\geq 2$ combined with Jensen's inequality. For any fixed $\lambda>0$, by \cite[Theorem 4.2-(i)]{HRZAoP}, there is a unique solution $u$ to the following (Zvonkin type) backward PDE:
\begin{equation*}
    \partial_{t} u+\Delta^{\frac{\alpha}{2}} u+B_t \cdot \nabla u=B_t,  \quad u(T)=0,
\end{equation*}
such that by \cite[Theorem 4.2-(iii)]{HRZAoP}, for $\lambda$ large enough, there exists $\varepsilon>0$ such that 
\begin{equation}\label{EQ:0111:10}
    \|\nabla u\|_{{L}_{T}^{\infty}}:=\|\nabla u\|_{L^{\infty}\left((0, T) ; {L}^{\infty}\right)} \leqslant \frac{1}{2},  \quad \|u\|_{L_T^{\infty}\bC^{{\alpha}/{2}+1+\varepsilon}}:=\|u\|_{L^{\infty}((0,T);\bC^{\alpha/2+1+\varepsilon})}<\infty.
\end{equation}
 Note that for each $t \in[0, T], x \mapsto \Phi_{t}(x):=x+u(t, x)$ forms a $C^{1}$-diffeomorphism on $\mathbb{R}^{d}$. By \ito's formula, we have for any $t \in[0, T]$,
\begin{equation*}
\begin{split}
        \Phi_{t}(X_{t}^{N, 1})&=\Phi_{0}\left(X_0\right)+\int_{0}^{t}\lambda u\left(s, X_{s}^{N, 1}\right) \mathrm{d} s+M_{t}^{N, 1}\\
        &\quad+\int_{0}^{t}\left[(b(s,\cdot,\rho_s^N(\cdot))-b(s,\cdot,\rho_s(\cdot))) \cdot \nabla \Phi_{s}\right]\left(X_{s}^{N, 1}\right) \mathrm{d} s
\end{split}
\end{equation*}
where
\begin{equation*}
    \begin{split}
        M_{t}^{N, 1}=\begin{cases}
\sqrt{2}\mathlarger{\int}_{0}^{t} \nabla \Phi_{s}\left(X_{s}^{N, 1}\right) \mathrm{d} W_{s}^{1}, &    \alpha=2, \\
\mathlarger\int_{0}^{t} \mathlarger \int_{\mathbb{R}^{d}_{*}} \delta_{z}^{(1)} \Phi_{s}\left(X_{s}^{N, 1}\right) \tilde{\cN}^{1}(\mathrm{d} s, \mathrm{d} z), &   \alpha \in(1,2).         
        \end{cases}
    \end{split}
\end{equation*}

Similarly, for $X_{t}^{1}$, the solution to \eqref{EQ:dDSDE:01} driven by $L^{\alpha, 1}$ and starting at the initial $X_{0}^{1}=X_{0}$, we have
\begin{equation*}  \Phi_{t}\left(X_{t}^{1}\right)=\Phi_{0}\left(X_0\right)+\int_{0}^{t}\lambda u\left(s, X_{s}^{1}\right) \mathrm{d} s+M_{t},
\end{equation*}
where
\begin{equation*}
    \begin{split}
        M_{t}=\begin{cases}
\sqrt{2} \mathlarger{\int}_{0}^{t} \nabla\Phi_{s}\left(X_{s}^{1}\right) \mathrm{d} W_{s}^{1},& \alpha=2, \\
\mathlarger{\int}_{0}^{t} \mathlarger{\int}_{\mathbb{R}^{d}_{*}} \delta_{z}^{(1)} \Phi_{s}\left(X_{s}^{1}\right) \tilde{\cN}^{1}(\mathrm{d} s, \mathrm{d} z),& \alpha \in(1,2).
        \end{cases}
    \end{split}
\end{equation*}
Thus, according to \eqref{EQ:0111:10},
\begin{equation}\label{EQ:0116:01}
    \begin{aligned}
\left|\Phi_{t}(X_{t}^{N, 1})-\Phi_{t}\left(X_{t}^{1}\right)\right| &  \lesssim \left(1+\lambda\|\nabla u\|_{{L}_{T}^{\infty}}\right) \int_{0}^{t}\left|X_{s}^{N, 1}-X_{s}^{1}\right| \mathrm{d} s+\left|M_{t}^{N, 1}-M_{t}\right| \\
& \quad +\left\|\nabla \Phi\right\|_{{L}_{T}^{\infty}} \int_{0}^{t}\left|[b(s,\cdot,\rho_s^N(\cdot))-b(s,\cdot,\rho_s(\cdot))]\left(X_{s}^{N, 1}\right)\right| \mathrm{d} s,
\end{aligned}
\end{equation}
where
\begin{equation*}
    M_{t}^{N, 1}-M_{t}=\begin{cases}
        \sqrt{2} \mathlarger{\int}_{0}^{t}\left[\nabla u\left(s, X_{s}^{N, 1}\right)-\nabla u\left(s, X_{s}^{1}\right)\right] \mathrm{d} W_{s}^{1},& \alpha=2, \\
\mathlarger{\int}_{0}^{t} \mathlarger{\int}_{\mathbb{R}^{d}_{*}}\left[\delta_{z}^{(1)} \Phi_{s}\left(X_{s}^{N, 1}\right)-\delta_{z}^{(1)} \Phi_{s}\left(X_{s}^{1}\right)\right] \tilde{\cN}^{1}(\mathrm{d} s, \mathrm{d} z),&  \alpha \in(1,2).
    \end{cases}
\end{equation*}
Observe in this latter case that:
\begin{equation}\label{EQ:0116:05}
   \begin{split}
              &\quad  \left|\delta_{z}^{(1)} \Phi_{s}\left(X_{s}^{N, 1}\right)-\delta_{z}^{(1)} \Phi_{s}\left(X_{s}^{1}\right)\right| \\
              & \lesssim \left|X_{s}^{N, 1}-X_{s}^{1}\right|\left[\|\nabla u(s, \cdot)\|_{{\infty}} \mathbbm{1}_{\{|z|>1\}}
        +\|\nabla u(s, \cdot)\|_{\bC^{(\alpha+\varepsilon)/2}}|z|^{(\alpha+\varepsilon)/2} \mathbbm{1}_{\{|z| \leqslant 1\}}\right] .
   \end{split}
\end{equation}
{
Then, in the case of $\alpha\in (1,2),$ it follows from BDG's inequality (see, e.g., \cite[Theorem 3.1]{Schilling2019SPA}) that
    \begin{align*}
\mathbb{E}[|M_{t}^{N, 1}-M_{t}|^{m}]&\lesssim
\E \left[\left(\mathlarger{\int}_{0}^{t} \mathlarger{\int}_{\mathbb{R}^{d}_{*}} \left[\left|\delta_{z}^{(1)} \Phi_{s}\left(X_{s}^{N, 1}\right)-\delta_{z}^{(1)} \Phi_{s}\left(X_{s}^{1}\right)\right|^{2} \right]\nu^{(\alpha)}(\mathrm{d} z) \mathrm{d} s\right)^{m/2}\right] & \\
&\qquad+\mathlarger{\int}_{0}^{t} \mathlarger{\int}_{\mathbb{R}^{d}_{*}} \E\left[\left|\delta_{z}^{(1)} \Phi_{s}\left(X_{s}^{N, 1}\right)-\delta_{z}^{(1)} \Phi_{s}\left(X_{s}^{1}\right)\right|^{m}\right] \nu^{(\alpha)}(\mathrm{d} z) \mathrm{d} s,
\end{align*}
which by \eqref{EQ:0116:05}, Jensen's inequality and \eqref{EQ:0111:10} implies that
\begin{align*}
    %&\lesssim \E \left[\left(\mathlarger{\int}_{0}^{t} \mathlarger{\int}_{\mathbb{R}^{d}_{*}} \left[\left(\left[|z|^{\frac{\alpha+\eps}{2}} \wedge 1 \right]|X^{N,1}_s-X^1_s|\right)^2\|\nabla\Phi_s\|_{\bC^{(\alpha+\eps)/2}}^2 \right]\nu^{(\alpha)}(\mathrm{d} z) \mathrm{d} s\right)^{m/2}\right]& \\
%&\quad + \mathlarger{\int}_{0}^{t} \mathlarger{\int}_{\mathbb{R}^{d}_{*}} \E\left[\left|\delta_{z}^{(1)} \Phi_{s}\left(X_{s}^{N, 1}\right)-\delta_{z}^{(1)} \Phi_{s}\left(X_{s}^{1}\right)\right|^{m}\right] \nu^{(\alpha)}(\mathrm{d} z) \mathrm{d} s\\
\mathbb{E}[|M_{t}^{N, 1}-M_{t}|^{m}]&\lesssim 
\E \left[\left(\mathlarger{\int}_{0}^{t} \mathlarger{\int}_{\mathbb{R}^{d}_{*}} \left[\left(\left[|z|^{\frac{\alpha+\eps}{2}}\wedge 1\right]|X^{N,1}_s-X^1_s|\right)^2\|\nabla\Phi_s\|_{\bC^{(\alpha+\eps)/2}}^2 \right]\nu^{(\alpha)}(\mathrm{d} z) \mathrm{d} s\right)^{m/2}\right]\\
&\quad +  \mathlarger{\int}_{0}^{t}  \mathlarger{\int}_{\mathbb{R}^d_{*}} \E \left[ \left( |z|^{\frac{\alpha+\varepsilon}{2}} \wedge 1  \right)^m |X^{N,1}_s-X^1_s|^m \| \nabla \Phi_s \|^m_{\bC^{(\alpha+\varepsilon)/2}}  \right] \nu^{(\alpha)}(\d z) \d s
\\
&\lesssim 
  \E \mathlarger{\int}_{0}^{t}|X_s^{N,1}-X_s^1|^m\|\nabla u\|^{m}_{L_T^{\infty}\bC^{\alpha/2+\varepsilon}}\d s\mathlarger{\int}_{\R^d_{*}}(1\wedge |z|^{\alpha+\varepsilon})^{m/2} \nu^{(\alpha)}(\d z)\\
 & \quad +\mathlarger{\int}_{0}^{t} \E [|X^{N,1}_s -X_s^1 |^m] \d s\mathlarger{\int}_{\mathbb{R}^d_{*}} \left( |z|^{(\alpha+ \varepsilon)m/2} \wedge 1 \right) \nu^{(\alpha)} (\d z) \\
& \lesssim \int_{0}^{t} \mathbb{E}[|X_{s}^{N, 1}-X_{s}^{1}|^{m}] \mathrm{d} s,
\end{align*}
}
where in the last inequality, we use the fact \eqref{EQ:0116:10}, that is, when $\alpha \in(1,2)$ and $\gamma>\alpha,$
\begin{equation*}
    \int_{\mR^d_*}(1\wedge|z|^{\gamma} )\nu^{(\alpha)}(\mathrm{d} z)<\infty.
\end{equation*}
Moreover, by \eqref{EQ:0111:10} and \eqref{EQ:0116:01}, we have 
\begin{equation*}
\begin{split}
        \mathbb{E}\left[\sup _{s \in[0, t]}\left|X_{s}^{N, 1}-X_{s}^{1}\right|^{m}\right] & \lesssim \mathbb{E}\left[\sup _{s \in[0, t]}\left|\Phi_{s}\left(X_{s}^{N, 1}\right)-\Phi_{s}\left(X_{s}^{1}\right)\right|^{m}\right] \\
& \lesssim \int_{0}^{t} \mathbb{E}\left|X_{s}^{N, 1}-X_{s}^{1}\right|^{m} \mathrm{d} s\\
&\quad+\mathbb{E}\left[\left(\int_{0}^{T}|[b(s,\cdot,\rho_s^N(\cdot))-b(s,\cdot,\rho_s(\cdot))](X_s^{N,1})| \mathrm{d} s\right)^{m}\right],
\end{split}
\end{equation*}
which implies, by Gronwall's inequality, Minkowski' inequality and \eqref{EQ:CONVERGENCE:010}, that
\begin{align*}
     \nonumber \left\|\sup _{s \in[0, T]}\left|X_{s}^{N, 1}-X_{s}^{1}\right|\right\|_{L^m(\Omega)} & \lesssim \left\{\mathbb{E}\left[\left(\int_{0}^{T}|[b(s,\cdot,\rho_s^N(\cdot))-b(s,\cdot,\rho_s(\cdot))](X_s^{N,1})| \mathrm{d} s\right)^m\right]\right\}^{1/m} \\
     &=\left\|\int_{0}^{T}|[b(s,\cdot,\rho_s^N(\cdot))-b(s,\cdot,\rho_s(\cdot))](X_s^{N,1})| \mathrm{d} s\right\|_{L^m(\Omega)}\\
     &\lesssim  \int_{0}^{T}\| [b(s,\cdot,\rho_s^N(\cdot))-b(s,\cdot,\rho_s(\cdot))](X_s^{N,1}) \|_{L^m(\Omega)} \mathrm{d} s\\
& \lesssim \int_{0}^{T}\left\|\rho_{s}^{N}-\rho_{s}\right\|_{L^m(\Omega;L^\infty)} \mathrm{d} s\\
\nonumber 
&\lesssim \int_{0}^{T} \Big(N^{-\theta\beta}+N^{-1/2+\theta d+\eps}\Big) \d s\\
\nonumber 
&\lesssim  N^{-\theta\beta}+ N^{-1/2+\theta d+\eps}.
\end{align*}
%where \textcolor{red}{$2\beta+\frac{2d}{q}<2\alpha-1$} ensuring the finiteness of the integral $\int_{0}^{T}s^{-\frac{2\beta+2d/q-\alpha+1}{\alpha}} \mathrm{d} s$ is finite. 
This completes the proof for 
$\alpha\in (1,2)$; the case $\alpha=2$ follows similarly, thus concluding the proof.
\end{proof}

\subsection*{Acknowledgments}
We are deeply grateful to Prof. Rongchan Zhu for her valuable suggestions and for correcting some errors.

\iffalse

\newpage
\fi

%\bibliographystyle{alpha}
%\bibliographystyle{unsrtnat}
%\bibliographystyle{unsrt}
%\bibliographystyle{plain}
%使得没有引用的参考文献能被一并显示
%\nocite{*}

%适用bibtex来生成参考文献
\bibliographystyle{plain}
\bibliography{reference_PoC_dDSDE_Levy}
%\printbibliography

\end{document}